\newcommand{\N}{\mathbb{N}}
\newcommand{\bv}{\mathrm{BV}}
\newcommand{\bx}{\mathbf{x}}
\newcommand{\bfx}{\textbf{x}}
\newcommand{\cA}{\mathcal A}
\newcommand{\cC}{\mathcal C}
\newcommand{\cE}{\mathcal E}
\newcommand{\cF}{\mathcal F}
\newcommand{\cG}{\mathcal G}
\newcommand{\cH}{\mathcal H}
\newcommand{\cL}{\mathcal L}
\newcommand{\cO}{\mathcal O}
\newcommand{\de}{\mathrm{d}}
\newcommand{\loc}{\mathrm{loc}}
\newcommand{\rr}{\mathrm{r}}
\newcommand{\eps}{\varepsilon}
\newcommand{\abs}[1]{\left|#1\right|}
\newcommand{\dd}{\,\mathrm{d}}
\newcommand{\dho}{\,\dd{\mathcal H}^{1}}
\newcommand{\dist}[2]{\operatorname{dist}(#1,#2)}
\newcommand{\supp}{\operatorname{supp}}
\renewcommand{\O}{\Omega}
\newcommand{\R}{\mathbb{R}}
\newcommand{\ho}{{\mathcal H}^{1}}
\renewcommand{\geq}{\geqslant}
\renewcommand{\leq}{\leqslant}
\newcommand{\wto}{\rightharpoonup}
\newcommand{\wsto}{\stackrel{*}{\rightharpoonup}}
\newcommand{\average}{{\mathchoice {\kern1ex\vcenter{\hrule
				height.4pt width 8pt depth0pt}
			\kern-11pt} {\kern1ex\vcenter{\hrule height.4pt width 4.3pt
				depth0pt} \kern-7pt} {} {} }}
\mathchardef\emptyset="001F
\newtheorem{theorem}{Theorem}
\newtheorem{definition}{Definition}
\newtheorem{lemma}{Lemma}
\newtheorem{proposition}{Proposition}
\newtheorem{remark}{Remark}
\title{Phase-field approximation of an epitaxial growth model with adatoms}
\author[G.~Fissore]{Gabriele Fissore}
\address[G.~Fissore]{Department of Mathematics - IMAPP, Radboud University, Nijmegen, The Netherlands}
\email{gabriele.fissore@ru.nl}
\begin{document}
	
	\begin{abstract}
We perform a mass constrained phase-field approximation for a model that describes the epitaxial growth of a two-dimensional thin film on a substrate in the context of linearised elasticity. The approximated model encodes a variable on the free surface of the film, that physically is interpreted as an adatom density.
	\end{abstract}
	
	\maketitle

\section{Introduction}
\allowdisplaybreaks
This paper is the continuation of the study contained in \cite{CriFis}. In that paper, we investigated the mathematical formulation of a model on epitaxial growth, namely the creation of a thin film onto a fixed crystalline substrate.  If the atomic structure of the film and the substrate slightly differs, a stress inside the deposited film is created. As a consequence, flat configurations of the film are not energetically optimal, and the creation of a corrugated surfaces is preferred together with defects like fractures (see \cite{Grin}). This process is relevant in industry as it is used in many applications such as the production of micro-components, microchips, coatings and graphene. The physical literature on crystal growth is extremely vast, we limit to mention the work \cite{SpeTer97} by Spencer and Tersoff.\\

In mathematics, this is an interesting topic to deal with, since it is possible to describe equilibrium configurations in a variational setting. We refer to the work  \cite{FriGur} by Fried and Gurtin for an exhaustive description of epitaxial growth. The problem can be rephrased in terms of linearised elasticity, if we assume that the thin film behaves like an elastic material. Since in the mathematical model a surface energy is taken into account,  the interplay between it and the elastic energy leads to configurations of the deposited crystal that are of mathematical interest. More specifically, an equilibrium configuration can exhibit pattern formations and defects such as fractures within the material. Those configurations, in the process of minimisation of an energy, are preferred since smooth surfaces might create a tension inside the crystalline lattice of the film.\\
This analysis has been carried out in the paper \cite{FonFusLeoMor07}, by Fonseca, Fusco, Leoni and Morini, in which the epitaxial growth is modelled in the two-dimensional case, where the film is constrained to be a graph of a function. Here, the surface energy considered corresponds to the length of the graph, up to two constants that, in the process of minimization, tell us whether it is convenient to have the substrate exposed or not. We remark that this mechanism is at the base of the \textit{island formation}. About this topic, we refer to the numerical simulations shown in \cite{BonCha02} by Bonnetier and Chambolle. In that framework, bounded variation functions are used to describe stress free configurations. The three-dimensional case has been handled in \cite{CriFriSol} by Crismale, Friedrich and Solombrino, in the context of $GSBD$ functions, a functional space introduced by Dal Maso in \cite{DalMasoGSBD}. In both models the fractures of the film are vertical inside the crystal. Again in \cite{BonCha02}, a phase-field formulation of the problem is introduced.\\
The work carried out in \cite{CriFis}, by Cristoferi and the author of this paper, adds a new result in this direction. We developed a model that introduces a new variable to the problem, based on an idea introduced in \cite{CarCriDie}. More specifically, we considered the case in which a density of atoms lies on the surface of the crystal. Unlike those that are part of the film, these atoms are free to move along the surface. They are referred to as \textit{adatoms}, and their relevance in industrial applications is highlighted by the fact that pattern formation is influenced by their presence. Many of the previous papers mentioned neglect such an additional variable.\\
The main result in \cite{CriFis} is a relaxation theorem with mass and density constraints. We modelled fractures in a material using new techniques, and it is shown that previous strategies for modelling material fractures do not apply when an adatom density is present on the surface. \\
Now, briefly explain the relevance and the main ideas behind the techniques introduced in \cite{CriFis}, many of which will be used again in the present paper. For further details in what follows, we refer to Section \ref{mathseting}.\\
First of all, let $\O_h$ be the sub-graph of a Lipschitz function $h:(a,b)\to \R{}$, namely
\begin{align*}
	\O_h\coloneqq\{\bx=(x,y)\in\R^2\, |\, y<h(x)\},
\end{align*}
 which represents the region occupied by the thin film. Inside the bulk, we condider $v\in H^1(\O;\R^2)$ as the displacement variable. The mentioned adatom density is seen as a Radon measure $\mu=u\ho\llcorner\Gamma_h$, where $\Gamma_h=\partial\O_h\cap(a,b)$ is the interface of the film and $u\in L^1(\Gamma_h)$. We study a functional of the form
\begin{align}\label{unrelaxedintro}
	\cH(\O_h,v,\mu)\coloneqq
	\displaystyle\int_{\O_h}W\big(E(v)-E_0(y)\big)\ \de \bfx+\int_{\Gamma_h}\psi(u)\dho.
\end{align}
The first term of \eqref{unrelaxedintro} is the stored elastic energy. In the linearised setting, it is given by $W:\R^{2\times 2}\to \R{}$ and the elasticity tensor $\mathbb{C}$, and it is defined as
\begin{align*}
	W(A)\coloneqq\frac{1}{2} A\cdot \mathbb{C}  [A].
\end{align*}
Moreover, 
\begin{align*}
	E(v)\coloneqq\frac{1}{2}(\nabla v + \nabla^\top v)
\end{align*}
is the symmetric gradient and $E_0:\R\to \R^{2\times2}$ represents the starting configuration of the substrate and the film, in which their lattices are perfectly aligned. \\
The surface term in \eqref{unrelaxedintro} penalises not only the length of the graph, but also an elevate adatom density, given the presence of a Borel function  $\psi:\R\to(0,\infty)$. \\
The main difficulty is the relaxation of \eqref{unrelaxedintro} for the surface term. In order to see vertical fractures inside the film, we use bounded variation functions and a suitable topology in the relaxation. Such a topology (see Definition \ref{topologyoldpaper}) is made in such a way that fractures are limiting configurations of sequences of regular ones. One can expect on the interface a similar result as in \cite{CarCriDie}, in which the surface energy has the form
\begin{align*}
	\int_{\Gamma_h}\widetilde{\psi}(u)\dho,
\end{align*} where $\psi$ is the convex sub-additive envelope of $\psi$ (see Definition \ref{psitilde}). \\
However, we proved that on vertical fractures we have a different behaviour. Indeed, let $\Gamma^c_h$ be the cut part of $h$, namely
\begin{align*}
	\Gamma^c_h\coloneqq\{\bfx\in\R^2 \,|\, x\in(a,b),\, h(x)\leq y<h^-(x)\},
\end{align*}
where $h^-(x)=\liminf_{y\to x} h(y)$ (see Definition \ref{cuts}). We have that the interface energy on fractures is relaxed to a term of the form
\begin{align*}
	\int_{\Gamma_h^c}\psi^c(u)\ \de \ho.
\end{align*} 
Here, $\psi^c$ (see Definition \ref{psic}) is defined  as
\begin{align*}
		\psi^c(s)\coloneqq\min\{\widetilde{\psi}(r)+\widetilde{\psi}(t) \,|\, s=r+t\}.
\end{align*}
Heuristically, since we are approximating a fracture with regular functions, we expect that a cut is the limit of the contributions given by functions on the left and on the right of it. In that sense $\psi^c$ detects the best possible way to do so. \\
We proved that $\cG$ is the relaxation of $\cH$, where
\begin{align}\label{introductionG}
\cG(\O_h,v,\mu)\coloneqq\int_{\O_h}W\big(E(v)-E_0(y)\big)\ \de \textbf{x}+\int_{\widetilde{\Gamma}_h}\widetilde{\psi}(u)\dho+\int_{\Gamma^c_h}\psi^c(u)\dho+\theta\mu^s(\R^2).
\end{align}
Here, $\theta>0$ is the recession coefficient (see Definition \ref{theta}), $\mu^s$ is the singular part of $\mu$ with respect to $\ho\llcorner\Gamma_h$ and $\widetilde{\Gamma}_h\coloneqq\Gamma_h\setminus \Gamma_h^c$ is the regular part of the graph of $h$.

In this paper, we develop a phase-field model for the sharp functional \eqref{introductionG}. The reason behind this analysis is motivated by the fact that sharp models are, in general, hard to tackle numerically. Therefore, we would like to provide with an approximation of \eqref{introductionG} by only using variables which are functions. Consider
\begin{align*}
	Q\coloneqq(a,b)\times\R{}\quad\text{and}\quad
	Q^+\coloneqq Q\cap \{y>0\}.
\end{align*}
We aim to approximate the set variable $\O$ to a phase-field sequence $(w_\eps)_\eps\subset H^1(Q^+)$ such that, for every $\eps>0$, we have $0\leq w_\eps\leq1$ and $w_\eps\to \chi_\O$ in $L^1(Q^+)$, as $\eps\to0$. In the same spirit, the Radon measure $\mu=u\ho\llcorner\Gamma_h$ is seen as the limit (in the sense of the $\ast$-weak convergence) of a sequence $(u_\eps)_\eps\subset L^1(Q)$.\\
To this end, we introduce the functional, for $w\in H^1(Q^+)$, $v\in H^1(Q;\R^2)$ and $u\in L^1(Q)$ 
\begin{align}\label{introductiongeps}
	\nonumber	\cG_\eps(w,v,u)\coloneqq\int_{Q^+}(w(\bfx)&+o(\eps)) W\big(E(v)-E_0(y)\big)\ \de\textbf{x}\\[5pt]
		&+	\frac{1}{\sigma}\int_{Q^+} \Big[\eps|\nabla w|^2+\frac{1}{\eps}P\big(w\big)\Big]\psi(u)\ \de\textbf{x},
\end{align}
where $P$ is a double-well potential and  $\sigma\coloneqq2\int_0^1 \sqrt{P(t)}\ \de t$. We refer to  Subsection \ref{phasefieldsubsection} for a precise formulation. The main result of this work is proving that $\cG_\eps\stackrel{\Gamma}{\to}\cG$, as $\eps\to0$, with respect to a suitable topology (see Definition \ref{topology} and Theorems \ref{mainresult1} and \ref{mainresult2}). \\
We remark that the role of $o(\eps)$ in \eqref{introductiongeps} and also in the functionals presented in this section is to ensure the compactness of the minimising sequences. \\
The proof makes use of the strategies introduced in \cite{BonCha02} and \cite{CarCri}. In order to merge them, we make large use of the new techniques introduced in \cite{CriFis}, as they proved to be solid enough to be adapted in a phase-field formulation.\\

There is a rich literature about phase-field models who aimed to the same outreach. We briefly recall the main contributions that are related to the present paper. In what follows, $w$ plays the role of the phase-field variable.\\
In the seminal paper on the topic \cite{AmbTor} the authors introduced the Ambrosio-Tortorelli functional. For a bounded domain $\O\subset \R^n$ and a given $g\in L^\infty(\O)$, $v,w\in H^1(\Omega)$, which satisfy $(v,w)=(g,1)$ on $\partial\O$, we define 
\begin{align}\label{atfunctional}
	AT_\eps(v,w)\coloneqq\int_\O (o(\eps)+w^2)\abs{\nabla v}^2\ \de\bfx+\frac{1}{2}\int_\O\Big(\eps\abs{\nabla w}^2+\frac{1}{4\eps}(w-1)^2\Big)\ \de\bfx.
\end{align}
We remark that  the definition of $AT_\eps$ does not coincide with the original one given by Ambrosio and Tortorelli. However, the above modern formulation of  $AT_\eps$ is made in such a way that is easier to be dealt numerically for applications. \\
We have that $AT_\eps$ is the phase-field formulation for the Mumford-Shah functional, in the sense that it $\Gamma$-converges, with respect to a suitable topology, to
\begin{align*}
	MS(v)=\int_\O \abs{\nabla v}^2\ \de\bfx+\cH^{n-1}(\partial\O\cap\{v\neq g\}) +\cH^{n-1}(S_v),
\end{align*}
where $v\in SBV^2(\O)$ and $S_v$ is the jump set of $v$. The Mumford-Shah functional is related to the image segmentation problem. For more details we refer to \cite{MumSha}. \\

Closer to our topic, phase-field models about epitaxial growth are present in \cite{BonCha02} and \cite{FonFusLeoMor07}. In particular, in both papers, but using independent techniques, the authors studied a phase-field formulation for the functional \eqref{unrelaxedintro} in case $\psi\equiv 1$. Their models include two constants $\sigma_c,\sigma_s>0$ that allow to describe the wetting or non-wetting regimes. This translates into the fact that the free surface of the film is penalised depending on its location. \\ 
In the same setting as in \eqref{unrelaxedintro}, the relaxed energy is given by
	\begin{align*}
		BC(\O_h,v)\coloneqq
		\displaystyle\int_{\O_h}W\big(E(v)-E_0(y)\big)\ \de \textbf{x}&+(\sigma_s\wedge\sigma_c)\ho(\Gamma_h\cap\{y=0\})\\[5pt]
		&+\sigma_c \Big(\ho(\Gamma_h\cap\{y>0\})+2\sum_{\bfx\in \Gamma_h^c}(h^-(x)-h(x))\Big),
	\end{align*}
	in which the contribution given by a fracture is counted twice.
In this context, in \cite{BonCha02}, they defined
\begin{align}\label{bcfunctional}
\nonumber BC_\eps(v,w)\coloneqq\int_{Q^+}(w&+o(\eps)) W\big(E(v)-E_0(y)\big)\ \de\textbf{x}\\[5pt]
&+	2\sigma_c\Big(\int_{Q^+}\frac{4\eps}{\pi^2} |\nabla w|^2+\frac{1}{\eps}w(1-w)\ \de\textbf{x}\Big),
\end{align}
and proved that $BC_\eps$ $\Gamma$-converges to $BC$ by using a topology similar to the one described in the present contribution.\\

The last model for a phase-field, and in that case by taking into account an adatom density is given by Caroccia and Cristoferi in \cite{CarCri}. In this paper the authors proved a general $\Gamma$-convergence result which is immediately applied to epitaxial growth in a domain in $\R^n$. They defined the energy
\begin{align*}
	CC_\eps(w,u)\coloneqq\frac{1}{\sigma} \int_{\O} \Big(\eps \abs{\nabla w}^2+\frac{1}{\eps}P(w)\Big)\psi(u)\ \de\bfx.
\end{align*}
They proved that, $CC_\eps$ $\Gamma$-converges to
\begin{align*}
	CC(E,\mu)\coloneqq \int_{\partial^\ast E}\widetilde{\psi}(u)\ \de\cH^{n-1}+\theta\mu^s(\R^n),
\end{align*}
in which $\mu=u\cH^{n-1}\llcorner\partial^\ast E+\mu^s$, where $\partial^\ast E$ denotes the reduced boundary of the set (see \cite{Mag}).\\
 We also mention the work by Conti, Focardi and Iurlano \cite{ConFocIur} and Focardi \cite{Foc} on the topic. \\

We notice that all the above phase-field models presented in this introduction displays two phases. In $BC_\eps$, $CC_\eps$ (and in $\cG_\eps$ for the present paper), those two phases are entirely encoded by the two wells of the potential. In this sense, it might seem that $AT_\eps$ has only one phase as the potential has one well at $1$. However, the second phase of $AT_\eps$ is $0$, since we expect that the phase-field variable $w$ vanishes on the jump set of $v$. This difference us further remarked by the fact that in $BC_\eps$, $CC_\eps$ and  $\cG_\eps$ the set $\O$ plays the role of a variable.

\section{Preliminaries}\label{preliminaries}
In this section we explain the main ideas of $\Gamma$-convergence and the technical tools needed to formulate our problem.
\subsection{$\Gamma$-convergence}
We introduce the definition of convergence of functional used in this paper. We refer to \cite{DM93} for a complete description and to the proofs of the following results. We start with the topological definition of $\Gamma$-limit. Let $X$ be a topological space and denote by  $\cO(x)$, for $x\in X$, the set of all open neighbourhoods of $x$. As a notation,  we write $\overline{\R}=\R\cup\{\pm\infty\}$.
\begin{definition}\label{gammaconvergenza}
	The \emph{$\Gamma$-lower limit} and the \emph{$\Gamma$-upper limit} of a sequence of functionals $F_\eps:X\to\overline{\R}$ are defined as
	\begin{align*}
		(\Gamma{-}\liminf_{\eps\to0} F_\eps) (x)&\coloneqq \sup_{U\in \cO(x)}\liminf_{\eps\to0} \inf_{y\in U}F_h(y),\\
		(\Gamma{-}\limsup_{\eps\to0} F_\eps) (x)&\coloneqq \sup_{U\in \cO(x)}\limsup_{\eps\to0} \inf_{y\in U}F_\eps(y).
	\end{align*}
	If there exists $F:X\to\overline{\R}$ such that
	\begin{align*}
		F=\Gamma{-}\liminf_{\eps\to0} F_\eps=\Gamma{-}\limsup_{\eps\to0} F_\eps,
	\end{align*}
	we write
	\begin{align*}
		F=\Gamma{-}\lim_{\eps\to0} F_\eps
	\end{align*}
	and we say that $F_\eps\stackrel{\Gamma}{\to}F$ in $X$ as $\eps\to0$.
\end{definition}
From Definition \ref{gammaconvergenza} is clear that the notion of $\Gamma$-convergence depends on the topology taken into account. If we make some additional assumptions on the topological space $X$, we can give a sequential characterization of the $\Gamma$-convergence. In case $X$ satisfies the first axiom of countability, namely the neighbourhood system of every point has a countable base, we can state the following theorem, whose proof can be found in \cite[Proposition 8.1]{DM93}.
\begin{theorem}\label{firstaxiomgammaconvergence}
	Let $X$ be a topological space which satisfies the first axiom of countability. We have that $F_\eps\stackrel{\Gamma}{\to}F$ in $X$, as $\eps\to0$, if and only if
	\begin{enumerate}
		\item[$(i)$] For every $x\in X$ and for every sequence $(x_\eps)_\eps\subset X$ converging to $x\in X$, we have
		\begin{align*}
			F(x)\leq\liminf_{\eps\to0}F_\eps(x_\eps);
		\end{align*}
		\item[$(ii)$] For every $x\in X$ there is a sequence $(x_\eps)_\eps\subset X$ converging to $x\in X$ such that
		\begin{align*}
			\limsup_{\eps\to0}F_\eps(x_\eps)\leq F(x).
		\end{align*}
	\end{enumerate}
\end{theorem}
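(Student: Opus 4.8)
The plan is to reduce the statement to a \emph{sequential} description of the two quantities $\Gamma{-}\liminf_{\eps\to0}F_\eps$ and $\Gamma{-}\limsup_{\eps\to0}F_\eps$, and then close the argument by a short logical manipulation. Concretely, the first and main step is to prove that, when $X$ satisfies the first axiom of countability, for every $x\in X$ one has
\begin{align*}
	(\Gamma{-}\liminf_{\eps\to0}F_\eps)(x)&=\min\Big\{\liminf_{\eps\to0}F_\eps(x_\eps)\ :\ (x_\eps)_\eps\subset X,\ x_\eps\to x\Big\},\\
	(\Gamma{-}\limsup_{\eps\to0}F_\eps)(x)&=\min\Big\{\limsup_{\eps\to0}F_\eps(x_\eps)\ :\ (x_\eps)_\eps\subset X,\ x_\eps\to x\Big\},
\end{align*}
where in both identities the infimum over sequences converging to $x$ is attained. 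Granting this, the theorem is immediate: one always has $(\Gamma{-}\liminf F_\eps)(x)\leq(\Gamma{-}\limsup F_\eps)(x)$; if $F=\Gamma{-}\lim F_\eps$, then $F(x)$ equals the minimum in the first identity, which is exactly condition $(i)$, and equals the (attained) minimum in the second, which produces a sequence as in $(ii)$; conversely $(i)$ forces $(\Gamma{-}\liminf F_\eps)(x)\geq F(x)$ and $(ii)$ forces $(\Gamma{-}\limsup F_\eps)(x)\leq F(x)$, so that $F(x)\leq(\Gamma{-}\liminf F_\eps)(x)\leq(\Gamma{-}\limsup F_\eps)(x)\leq F(x)$, i.e.\ $F=\Gamma{-}\lim F_\eps$.

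To establish the two identities I would first fix a countable base of neighbourhoods $(U_n)_n\subset\cO(x)$, which may be assumed decreasing. Since every $U\in\cO(x)$ contains some $U_n$ and $\eps\mapsto\inf_{U}F_\eps$ is monotone under inclusion, the suprema over $\cO(x)$ in Definition \ref{gammaconvergenza} collapse to suprema over the $U_n$'s; and because $(U_n)_n$ is decreasing, the numbers $a_n\coloneqq\liminf_{\eps\to0}\inf_{U_n}F_\eps$ and $b_n\coloneqq\limsup_{\eps\to0}\inf_{U_n}F_\eps$ are nondecreasing, so $(\Gamma{-}\liminf F_\eps)(x)=\lim_n a_n$ and $(\Gamma{-}\limsup F_\eps)(x)=\lim_n b_n$. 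The inequality ``$\geq$'' in each identity is then routine: if $x_\eps\to x$, then for each fixed $n$ one has $x_\eps\in U_n$ for $\eps$ small, hence $F_\eps(x_\eps)\geq\inf_{U_n}F_\eps$ eventually; taking $\liminf_{\eps\to0}$ (respectively $\limsup_{\eps\to0}$) and then letting $n\to\infty$ gives $\liminf_{\eps\to0}F_\eps(x_\eps)\geq\lim_n a_n$ (respectively $\limsup_{\eps\to0}F_\eps(x_\eps)\geq\lim_n b_n$).

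The delicate part, and the one I expect to be the main obstacle, is the construction of an optimal sequence, i.e.\ the ``$\leq$'' inequality together with attainment. The idea is a diagonalisation: for each $n$ pick an almost-minimiser of $F_\eps$ in $U_n$ and then let the index $n=n(\eps)$ tend to infinity slowly as $\eps\to0$, so that the glued sequence eventually lies in every $U_n$ -- whence $x_\eps\to x$ because $(U_n)_n$ is a base -- while the energy stays below $\lim_n a_n+o(1)$, respectively $\lim_n b_n+o(1)$. I expect the bookkeeping to differ genuinely in the two cases: for the $\Gamma$-liminf it suffices to beat $a_n$ along a subsequence of $\eps$'s (since $a_n=\liminf_{\eps\to0}\inf_{U_n}F_\eps$ only guarantees small values of $\inf_{U_n}F_\eps$ along some $\eps_k\to0$), whereas for the $\Gamma$-limsup one must exploit that $b_n=\limsup_{\eps\to0}\inf_{U_n}F_\eps$ yields $\inf_{U_n}F_\eps\leq b_n+1/n$ for \emph{all} small $\eps$, and calibrate the speed of $n(\eps)\to\infty$ accordingly. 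The degenerate cases $\lim_n a_n=+\infty$ or $\lim_n b_n=+\infty$ require no construction, since the constant sequence $x_\eps\equiv x$ already realises the (then infinite) value by the ``$\geq$'' inequality applied to it. With Step~1 in hand the proof is concluded by the logical deduction above; note that nothing beyond first countability is used, which is exactly what makes the countable-base reduction legitimate.
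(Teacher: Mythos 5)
The paper does not actually prove this theorem: it cites \cite[Proposition 8.1]{DM93} for the proof. Your argument is correct and is essentially the standard one given in that reference: reduce, via a decreasing countable base $(U_n)_n$ at $x$, the definitions of $\Gamma$-lower and $\Gamma$-upper limit to monotone limits $\lim_n a_n$ and $\lim_n b_n$, prove that these coincide with the attained minima of $\liminf_\eps F_\eps(x_\eps)$ and $\limsup_\eps F_\eps(x_\eps)$ over sequences $x_\eps\to x$, and then conclude by the chain $F\leq\Gamma{-}\liminf\leq\Gamma{-}\limsup\leq F$. The ``$\geq$'' inequality and the final logical deduction are written out fully and are right. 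The diagonalisation is only sketched, but your sketch correctly identifies the genuine asymmetry between the two cases: for the $\Gamma$-liminf you must place the near-minimisers $y_n\in U_n$ only along a strictly increasing subsequence of indices (filling the remaining entries with the constant $x$), because $a_n$ is a $\liminf$ in $\eps$; for the $\Gamma$-limsup you can define $n(\eps)$ for \emph{all} small $\eps$, since $b_n$ being a $\limsup$ gives $\inf_{U_n}F_\eps\leq b_n+1/n$ eventually for every $\eps$, and then let $n(\eps)\to\infty$ slowly. A minor omission: you treat the degenerate case $\lim_n a_n=+\infty$ (resp.\ $\lim_n b_n=+\infty$) via the constant sequence, but say nothing about $-\infty$; that case is handled by the same diagonalisation with $-n$ in place of $L+2/n$ and does not affect the argument. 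Nothing substantive is missing.
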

In the present paper, we are working in the framework of Banach spaces endowed with the weak or the $\ast$-weak topology. Since the weak topology is not a metrisable, we need a result that ensures that such a topology is locally metrisable. To this end, we use the following proposition whose proof is contained in \cite[Proposition 2.6]{Del}. 
\begin{proposition} \label{equicoercivenessgamma}
	There exists a distance $d$ on the space of positive measures with the following property. Let $(\mu_\eps)_\eps$ be a sequence of positive measures on $\R^n$ such that $\sup_{\eps>0}\mu_\eps(\R^n)<\infty$. Then, $\mu_\eps\stackrel{\ast}{\to}\mu$ for some positive measure $\mu$ on $\R^n$ if and only if
	\begin{align*}
		\lim_{\eps\to0} d(\mu_\eps,\mu)=0.
	\end{align*} 
\end{proposition}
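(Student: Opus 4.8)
I would obtain $d$ from the standard construction that metrises the weak-$\ast$ topology on bounded subsets of the dual of a separable Banach space. The first step is to identify every finite positive Radon measure on $\R^n$ with the corresponding element of the space $\mathcal M(\R^n)$ of finite signed Radon measures, which is isometrically $\big(C_0(\R^n)\big)^{\ast}$, so that $\mu_\eps\stackrel{\ast}{\to}\mu$ precisely means $\int_{\R^n}\varphi\dd\mu_\eps\to\int_{\R^n}\varphi\dd\mu$ for every $\varphi\in C_0(\R^n)$. Since $\R^n$ is locally compact, $\sigma$-compact and second countable, $C_0(\R^n)$ is separable; I would fix a countable set $\{\varphi_k\}_{k\in\N}$ dense in the closed unit ball of $C_0(\R^n)$ and set
\[
d(\mu,\nu)\coloneqq\sum_{k=1}^{\infty}\frac{1}{2^{k}}\min\Big\{1,\ \Big|\textstyle\int_{\R^n}\varphi_k\dd\mu-\int_{\R^n}\varphi_k\dd\nu\Big|\Big\}.
\]
The series converges and $d$ is symmetric and satisfies the triangle inequality; it is a genuine metric because density of $\{\varphi_k\}$ turns $d(\mu,\nu)=0$ into $\int\varphi\dd\mu=\int\varphi\dd\nu$ for every $\varphi\in C_0(\R^n)$, hence $\mu=\nu$. (If one wants $d$ defined on all positive Radon measures, one replaces $C_0(\R^n)$ by $C_c(\R^n)$ and chooses $\{\varphi_k\}$ so that every $\varphi\in C_c(\R^n)$ is a uniform limit of functions $\varphi_{k_j}$ whose supports lie in one common compact set; this is possible since $C(K)$ is separable for each compact $K$, and the argument below is unchanged.)

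For the implication $\mu_\eps\stackrel{\ast}{\to}\mu\Rightarrow d(\mu_\eps,\mu)\to0$ I would split the series: fixing $N$, each of the first $N$ summands tends to $0$ because $\varphi_k\in C_0(\R^n)$, while the tail is at most $2^{-N}$ uniformly in $\eps$; letting $\eps\to0$ and then $N\to\infty$ closes this direction, and the mass bound is not used. For the converse, assume $d(\mu_\eps,\mu)\to0$ and set $M\coloneqq\sup_{\eps>0}\mu_\eps(\R^n)<\infty$. Then $\int\varphi_k\dd\mu_\eps\to\int\varphi_k\dd\mu$ for every fixed $k$; given $\varphi\in C_0(\R^n)$ with $\norma{\varphi}_\infty\le1$ and $\delta>0$, I would pick $\varphi_k$ with $\norma{\varphi-\varphi_k}_\infty<\delta$ and estimate, using $\mu_\eps(\R^n)\le M$,
\[
\Big|\textstyle\int\varphi\dd\mu_\eps-\int\varphi\dd\mu\Big|\ \le\ \Big|\textstyle\int\varphi_k\dd\mu_\eps-\int\varphi_k\dd\mu\Big|+\delta\big(M+\mu(\R^n)\big);
\]
letting $\eps\to0$ and then $\delta\to0$ gives $\int\varphi\dd\mu_\eps\to\int\varphi\dd\mu$, i.e. $\mu_\eps\stackrel{\ast}{\to}\mu$. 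Here one may also note that positivity and finiteness of the limit come for free, since $\int\varphi\dd\mu=\lim_\eps\int\varphi\dd\mu_\eps\ge0$ whenever $\varphi\ge0$, and $\mu(\R^n)\le\liminf_\eps\mu_\eps(\R^n)\le M$ by lower semicontinuity of the total mass under weak-$\ast$ convergence.

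I do not expect a real obstacle here; the points that need care are the choice of the test-function class — it must be $C_0(\R^n)$ (or $C_c(\R^n)$), which provides a separable predual and a clean duality, and not $C_b(\R^n)$ — and the use of the uniform bound $M$ in the converse implication. The latter is the single place where the hypothesis $\sup_{\eps>0}\mu_\eps(\R^n)<\infty$ is used, and it is essential: it is precisely what promotes convergence against the countable family $\{\varphi_k\}$ to convergence against every $\varphi\in C_0(\R^n)$, and without it the stated equivalence fails because mass can escape to infinity.
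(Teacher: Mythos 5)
The paper does not prove this proposition: it is quoted verbatim from Delladio's reference (cited as {[Del]}, Proposition 2.6), so there is no in-house argument to compare against. Your blind proof is the standard one, and it is essentially correct: you construct $d$ from a countable family $\{\varphi_k\}$ dense in the closed unit ball of $C_0(\R^n)$ via a weighted truncated sum, verify the metric axioms, prove the forward implication by splitting the series, and prove the converse by a $2\eps$-argument that upgrades convergence against the $\varphi_k$'s to convergence against all of $C_0(\R^n)$ using the uniform mass bound. You also correctly identify that the mass bound is used exactly once (in the converse), and that it is not removable; your remark that the test class must be $C_0$ or $C_c$ rather than $C_b$ is on point.

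One minor circularity in the write-up: in the converse direction you use $\mu(\R^n)<\infty$ in the estimate
\[
\Big|\int\varphi\,\de\mu_\eps-\int\varphi\,\de\mu\Big|\leq\Big|\int\varphi_k\,\de\mu_\eps-\int\varphi_k\,\de\mu\Big|+\delta\big(M+\mu(\R^n)\big),
\]
but then say that finiteness of $\mu(\R^n)$ ``comes for free'' from lower semicontinuity under weak-$\ast$ convergence, which is what you are still in the middle of proving. The cleanest fix is to note that the hypotheses already place $\mu$ in the space on which $d$ is defined (a finite positive measure), so $\mu(\R^n)<\infty$ is part of the standing assumption; alternatively, derive $\mu(\R^n)\leq M$ directly by testing $\mu$ against $\varphi_k$'s approximating indicators of large balls, before running the $2\eps$-argument. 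Either way the gap is cosmetic, not substantive.
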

\begin{definition}
	We say that a sequence of functionals $(F_\eps)_\eps$ is equi-coercive if there exists
	a lower semi-continuous coercive functional $\rho: X\to\R$ such that $F_\eps \geq \rho$ for all $\eps>0$.
\end{definition}
We now state a theorem that ensure the convergence of minimising sequences.
\begin{theorem}\label{convergenzaminimi}
	Let $(F_\eps)_\eps$ be a sequence of is equi-coercive  functions. Then, if $F_\eps\stackrel{\Gamma}{\to}F$, we have that
	\begin{align*}
		\lim_{\eps\to0} \inf_X F_\eps=\min_X F.
	\end{align*}
	Moreover, if $(x_\eps)_\eps\subset X$ is minimising sequence for $F$, then every of its cluster point is a minimum for $F$.
\end{theorem}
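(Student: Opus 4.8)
The plan is to run the classical ``fundamental theorem of $\Gamma$-convergence'': establish the two one-sided comparisons between $\inf_X F_\eps$ and $\inf_X F$, and then use equi-coercivity to extract from an asymptotically minimising sequence a convergent one whose limit realises $\min_X F$. Everything rests on the sequential characterisation of Theorem \ref{firstaxiomgammaconvergence}, so I would translate the hypothesis $F_\eps\stackrel{\Gamma}{\to}F$ into the liminf inequality $(i)$ and the existence of recovery sequences $(ii)$; in the weak/$\ast$-weak ambient topology relevant to this paper, the applicability of that characterisation is precisely what the local metrisability of Proposition \ref{equicoercivenessgamma} provides.

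First I would show $\limsup_{\eps\to0}\inf_X F_\eps\le\inf_X F$. Fix $x\in X$. By $(ii)$ of Theorem \ref{firstaxiomgammaconvergence} there is $(x_\eps)_\eps\subset X$ with $x_\eps\to x$ and $\limsup_{\eps\to0}F_\eps(x_\eps)\le F(x)$. Since $\inf_X F_\eps\le F_\eps(x_\eps)$ for every $\eps$, taking the limsup gives $\limsup_{\eps\to0}\inf_X F_\eps\le F(x)$, and taking the infimum over $x\in X$ yields the claim. Discarding the trivial case $F\equiv+\infty$, this shows $\inf_X F_\eps$ is bounded above for $\eps$ small.

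Next, pick an asymptotically minimising sequence $(x_\eps)_\eps$, i.e.\ $F_\eps(x_\eps)\le\inf_X F_\eps+\delta_\eps$ with $\delta_\eps\to0$ (this is the sequence meant in the ``moreover'' part of the statement). By equi-coercivity $\rho(x_\eps)\le F_\eps(x_\eps)$, and the right-hand side is bounded above by the previous step; since $\rho$ is coercive and lower semicontinuous, $(x_\eps)_\eps$ lies in a compact subset of $X$, so some subsequence satisfies $x_{\eps_k}\to x_0$. Applying $(i)$ of Theorem \ref{firstaxiomgammaconvergence} along this subsequence,
\[
F(x_0)\le\liminf_{k\to\infty}F_{\eps_k}(x_{\eps_k})=\liminf_{k\to\infty}\inf_X F_{\eps_k}\le\limsup_{\eps\to0}\inf_X F_\eps\le\inf_X F .
\]
Hence $F(x_0)=\inf_X F$, so the infimum of $F$ is attained and equals $\min_X F$, and every inequality above is an equality; in particular $\liminf_{\eps\to0}\inf_X F_\eps=\limsup_{\eps\to0}\inf_X F_\eps=\min_X F$, that is $\lim_{\eps\to0}\inf_X F_\eps=\min_X F$.

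Finally, if $(x_\eps)_\eps$ is any asymptotically minimising sequence for $F_\eps$ and $x_0$ is a cluster point, I would choose a subsequence $x_{\eps_k}\to x_0$ and repeat the displayed chain to get $F(x_0)\le\lim_{\eps\to0}\inf_X F_\eps=\min_X F$, hence $x_0\in\argmin_X F$. The argument is essentially bookkeeping; the only genuine point of care — and the sole conceivable obstacle — is the use of equi-coercivity to guarantee precompactness of the minimising sequence (and thereby that the minimum is attained), together with ensuring that the sequential characterisation of $\Gamma$-convergence is legitimate in the non-metrisable ambient topology, which is exactly what Proposition \ref{equicoercivenessgamma} secures.
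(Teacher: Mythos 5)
The paper does not actually prove this theorem; it is quoted as a standard result from Dal Maso's monograph \cite{DM93}, so there is no internal argument to compare against. Your proposal is the classical ``fundamental theorem of $\Gamma$-convergence'' argument, and in broad strokes it is the right one: the upper estimate $\limsup_{\eps\to0}\inf_X F_\eps\le\inf_X F$ via recovery sequences is correct, as is the use of equi-coercivity to extract a convergent subsequence from an asymptotically minimising sequence, and the application of the $\Gamma$-liminf inequality to conclude that the limit point $x_0$ minimises $F$ and that the infimum is attained.

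There is, however, a genuine gap in the step ``every inequality above is an equality; in particular $\liminf_{\eps\to0}\inf_X F_\eps=\limsup_{\eps\to0}\inf_X F_\eps$.'' Your displayed chain only involves $\liminf_{k\to\infty}\inf_X F_{\eps_k}$ along the subsequence produced by compactness, and for a subsequence one always has $\liminf_{k\to\infty}\inf_X F_{\eps_k}\ge\liminf_{\eps\to0}\inf_X F_\eps$ — passing to a subsequence can only \emph{raise} a $\liminf$. So turning your chain into equalities gives $\limsup_{\eps\to0}\inf_X F_\eps=\min_X F$, but it does not give the matching lower bound $\liminf_{\eps\to0}\inf_X F_\eps\ge\min_X F$, which is what is missing for the full limit to exist. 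The standard fix is to run the compactness-plus-$\Gamma$-liminf argument starting from a subsequence $(\eps_j)_j$ chosen so that $\inf_X F_{\eps_j}\to\liminf_{\eps\to0}\inf_X F_\eps$: pick $x_{\eps_j}$ with $F_{\eps_j}(x_{\eps_j})\le\inf_X F_{\eps_j}+\eps_j$, use equi-coercivity to extract a further convergent subsequence $x_{\eps_{j_k}}\to x_0$, and then $(i)$ of Theorem \ref{firstaxiomgammaconvergence} gives $\min_X F\le F(x_0)\le\liminf_{\eps\to0}\inf_X F_\eps$. Combined with your limsup bound, this yields $\lim_{\eps\to0}\inf_X F_\eps=\min_X F$. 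With that convergence in hand, your treatment of the ``moreover'' part (cluster points of asymptotically minimising sequences are minimisers) is correct.
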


\subsection{Convex sub-additive envelope} The following functions are essential in the formulation of the relaxed functional.
\begin{definition}\label{sub}
	A function $\psi:[0,+\infty)\to\R{}$ is said to be \emph{sub-additive} if
	\begin{align*}
		\psi(s+t)\leq\psi(s)+\psi(t),
	\end{align*}
	for any $s,t\geq0$.
\end{definition}
\begin{definition}
	Let $\psi:\R\to\R$. We define its \emph{convex envelope} $\psi^{\text{cvx}}:\R\to\R$ as
	\[
	\psi^{\text{cvx}}(x)\coloneqq\sup\{\rho(x) \, |\, \rho\ \text{is convex and}\ \rho\leq\psi\},
	\]
	for all $x\in\R$.
\end{definition}
\begin{definition}\label{psitilde}
	Let $\psi:[0,+\infty)\to\R{}$. 
	The \emph{convex sub-additive envelope} of $\psi$ is the function
	$\widetilde{\psi}:[0,+\infty)\to\R{}$ defined as
	\begin{align*}
		\widetilde{\psi}(s)\coloneqq\sup\{f(s)\, |\, f:[0,+\infty) \to\R{}\ \emph{is convex, sub-additive and}\ f\leq\psi\},
	\end{align*} 
	for all $s\in[0,+\infty)$.
\end{definition}
\begin{definition}\label{psic}
	Let $\psi:[0,+\infty)\to\R{}$.
	We define the function $\psi^c:[0,+\infty)\to\R{}$ as
	\[
	\psi^c(s)\coloneqq\min\{\widetilde{\psi}(r)+\widetilde{\psi}(t) \,|\, s=r+t\},
	\]
	for all $s\in[0,+\infty)$.
\end{definition}
The main properties of $\widetilde{\psi}$ and $\psi^c$ have been widely explored in \cite{CarCri}, \cite{CarCriDie} and \cite{CriFis}. We just recall some results on the convex sub-additive envelope. The first one is a combination of \cite[Lemma A.11]{CarCriDie} and \cite[Lemma 2.2]{CarCri}.

\begin{lemma}\label{lem:char_psi_s0}
	Let $\psi:[0,+\infty)\to(0,+\infty)$. Then
	\[
	\widetilde{\psi}=\widetilde{\psi^\text{cvx}}.
	\]
	Namely, in order to compute the convex sub-additive envelope of $\psi$, we can assume, without loss of generality, that $\psi$ is convex.\\
	Moreover, assume $\psi$ to be convex. Then, there exists $s_0\in(0,+\infty]$ such that
	\begin{align*}
		\widetilde{\psi}(s)=\begin{cases}
			\psi(s)\quad & s\leq s_0, \\[5pt]
			\theta s \quad& s> s_0,
		\end{cases}
	\end{align*}
	for some $\theta>0$.
\end{lemma}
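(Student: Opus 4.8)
The plan is to establish the two assertions of Lemma~\ref{lem:char_psi_s0} separately, starting from the structural facts about $\widetilde\psi$ that are available in the cited references and reproving the parts that are needed here.

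\textbf{Step 1: reduction to the convex case, i.e. $\widetilde\psi=\widetilde{\psi^{\mathrm{cvx}}}$.} Since $\psi^{\mathrm{cvx}}\leq\psi$, every convex sub-additive minorant of $\psi^{\mathrm{cvx}}$ is also one of $\psi$, so $\widetilde{\psi^{\mathrm{cvx}}}\leq\widetilde\psi$. For the reverse inequality, I would take $f$ a convex sub-additive function with $f\leq\psi$; then $f$, being convex and $\leq\psi$, satisfies $f\leq\psi^{\mathrm{cvx}}$ by the very definition of the convex envelope, hence $f\leq\widetilde{\psi^{\mathrm{cvx}}}$. Taking the supremum over all such $f$ gives $\widetilde\psi\leq\widetilde{\psi^{\mathrm{cvx}}}$. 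This is the short, purely order-theoretic half, and no analysis is needed.

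\textbf{Step 2: the structure of $\widetilde\psi$ when $\psi$ is convex.} Assume now $\psi:[0,+\infty)\to(0,+\infty)$ convex. I would first argue that $\widetilde\psi$ is itself convex and sub-additive, being the supremum of a family of such functions (convexity and the sub-additivity inequality both pass to the pointwise supremum), and that $\widetilde\psi\leq\psi$ with $\widetilde\psi(0)\geq 0$; positivity of $\psi$ forces $\widetilde\psi>0$ on $(0,+\infty)$ as well. The key quantity is $\theta\coloneqq\inf_{s>0}\widetilde\psi(s)/s$, the asymptotic slope; sub-additivity of $\widetilde\psi$ gives (by a standard Fekete-type argument) that $\widetilde\psi(s)/s\to\theta$ as $s\to\infty$ and that $\widetilde\psi(s)\geq\theta s$ for all $s\geq 0$, so in particular $\theta\geq 0$; positivity on $(0,\infty)$ and the lower bound $\widetilde\psi\geq\theta s$ combined with convexity actually yield $\theta>0$. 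Next I would define $s_0\coloneqq\sup\{s\geq 0 : \psi(s)=\widetilde\psi(s)\}$ — with the convention $s_0=+\infty$ if the set is unbounded, and noting the set is nonempty since $\widetilde\psi(0)=\psi(0)$ after one checks $\widetilde\psi(0)=\psi(0)$ (a separate small check using that a positive convex sub-additive minorant can be taken to match $\psi$ at $0$, or simply noting $f\equiv \psi(0)\wedge(\text{linear})$ works). Then I would show: on $[0,s_0]$ one has $\widetilde\psi=\psi$, because $\psi$ restricted there is already dominated by a convex sub-additive function that agrees with it — here one exploits that $\psi$ is convex and that on $[0,s_0]$ the candidate envelope cannot dip below $\psi$ without contradicting the definition of $s_0$ via convexity; and on $(s_0,+\infty)$ one has $\widetilde\psi(s)=\theta s$, because the line $\theta s$ is convex, sub-additive, a minorant of $\psi$ beyond $s_0$ (by convexity of $\psi$ and the fact that $\psi$ leaves its envelope exactly at $s_0$), so $\widetilde\psi\geq\theta s$ there, while the opposite inequality is the general lower bound established above — wait, that is the same direction; the non-trivial inequality $\widetilde\psi(s)\leq\theta s$ for $s>s_0$ comes from exhibiting $\theta s$ as an admissible competitor, i.e. $\theta s\leq\psi(s)$ for $s>s_0$, which follows since the convex function $\psi$ lies above all its supporting lines and the relevant supporting slope is $\leq\theta$ once $\psi$ has detached from its envelope.

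\textbf{Main obstacle.} The delicate point is pinning down the threshold $s_0$ and proving that $\widetilde\psi$ is genuinely \emph{equal} to $\psi$ up to $s_0$ and genuinely \emph{linear} afterwards, rather than merely sandwiched. The subtlety is that $\widetilde\psi$ must simultaneously be convex, sub-additive, and the largest such minorant; the interaction between convexity (a local/second-order condition) and sub-additivity (a condition coupling far-apart points through $0$) is what makes the envelope kink at exactly one point. I would handle this by a supporting-line argument: at any point $s$ where $\psi(s)=\widetilde\psi(s)$, convexity of $\psi$ gives a supporting line $\ell$ with $\ell\leq\psi$; if its slope is less than $\theta$ one shows $\widetilde\psi$ could be pushed up, and if the slope exceeds $\theta$ then sub-additivity of $\widetilde\psi$ (using $\widetilde\psi(s)\leq\widetilde\psi(s')+\widetilde\psi(s-s')$ with $s-s'$ small and the asymptotic slope $\theta$) is violated — forcing the detachment to occur precisely where $\psi'$ reaches $\theta$, which is the single point $s_0$ by monotonicity of $\psi'$. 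Once this dichotomy is clean, the case split $s\leq s_0$ versus $s>s_0$ in the statement follows, and one reads off that $\widetilde\psi$ is continuous at $s_0$ with $\psi(s_0)=\theta s_0$ when $s_0<\infty$. The remaining verifications (e.g. $\widetilde\psi(0)=\psi(0)$, measurability, that $\theta$ here is the same $\theta$ appearing in Definition~\ref{theta}) are routine bookkeeping that I would relegate to a sentence each, citing \cite{CarCriDie} and \cite{CarCri} where the same computations appear.
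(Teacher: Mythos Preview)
The paper does not prove this lemma at all: it states it as ``a combination of \cite[Lemma A.11]{CarCriDie} and \cite[Lemma 2.2]{CarCri}'' and gives no argument. Your sketch is therefore not competing with a proof in the paper but rather reconstructing the cited ones, and in that sense the overall strategy you outline (the order-theoretic Step~1, and in Step~2 the identification of $s_0$ via the supporting line of $\psi$ that passes through the origin, with slope $\theta$) is indeed the standard route taken in those references.

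A couple of your side remarks in Step~2 are imprecise and would not survive as written. The candidate ``$f\equiv\psi(0)\wedge(\text{linear})$'' you propose to force $\widetilde\psi(0)=\psi(0)$ is a minimum of two affine functions, hence concave, not convex, so it is not an admissible competitor; the clean way is to exhibit directly the piecewise function $\psi$ on $[0,s_0]$ glued to the tangent line $\theta s$ on $[s_0,\infty)$ and verify that it is convex, sub-additive, and $\leq\psi$. Your argument that $\theta>0$ from positivity and convexity alone is also not right (take $\psi\equiv1$: then $\widetilde\psi\equiv1$ and the recession coefficient is $0$); the statement is saved only because in that situation $s_0=+\infty$ and the second branch is vacuous, so ``for some $\theta>0$'' is trivially satisfiable. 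These are exactly the bookkeeping points you defer to the references, so the proposal is fine as a sketch, but be aware that those two specific justifications would need to be replaced if you were to write the argument out in full.
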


\begin{remark}
	Note that, if $\psi$ is differentiable at $s_0$, then $\theta=\psi'(s_0)$. In particular, if $s_0<+\infty$, it holds that $\widetilde{\psi}$ is linear in $[s_0,+\infty)$.
\end{remark}
\begin{definition}\label{theta}
	Let $\psi:[0,+\infty)\to\R$. We define the \emph{recession coefficients} of $\widetilde{\psi}$ and $\psi^c$ as
	\begin{align*}
		\widetilde{\theta}\coloneqq\lim_{s\to+\infty}\frac{\widetilde{\psi}(s)}{s}\quad\text{and}\quad
		\theta^c\coloneqq\lim_{s\to+\infty}\frac{\psi^c(s)}{s},
	\end{align*}
	respectively, where $\widetilde{\psi}$ is as in Definition \ref{psitilde} and $\psi^c$ as in Definition \ref{psic}.
\end{definition}
The proof of the following lemma can be found in \cite[Lemma 5.2]{CriFis}.
\begin{lemma}\label{lem:thetac_thetawild}
	Let $\psi:[0,+\infty)\to\R$. Then, $\widetilde{\theta} = \theta^c$. Their common value will be denoted as $\theta$.
\end{lemma}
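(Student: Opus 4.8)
The plan is to bracket $\psi^c$ between $\widetilde\psi$ and a rescaled copy of $\widetilde\psi$, and then let $s\to+\infty$. First I would recall that $\widetilde\psi$ is convex and sub-additive (it is the pointwise supremum of the family in Definition \ref{psitilde}, and this family is closed under suprema, as already used in the cited works). Since $\psi>0$, the zero function belongs to that family, so $\widetilde\psi\geq 0$. Because $\widetilde\psi$ is convex on $[0,+\infty)$, the difference quotient $s\mapsto(\widetilde\psi(s)-\widetilde\psi(0))/s$ is non-decreasing, hence $\widetilde\psi(s)/s$ admits a limit as $s\to+\infty$; this limit is exactly $\widetilde\theta\in[0,+\infty]$ of Definition \ref{theta}, so in particular $\widetilde\theta$ is well defined. (Alternatively, one may invoke Lemma \ref{lem:char_psi_s0}: if $s_0<+\infty$ then $\widetilde\psi$ is linear with slope $\theta$ on $[s_0,+\infty)$ and $\widetilde\theta=\theta$ trivially; if $s_0=+\infty$ then $\widetilde\psi=\psi^{\mathrm{cvx}}$ and one uses convexity as above.)

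Next I would prove the two inequalities. For the lower bound $\theta^c\geq\widetilde\theta$: by Definition \ref{psic}, for each $s$ there are $r_s,t_s\geq 0$ with $r_s+t_s=s$ and $\psi^c(s)=\widetilde\psi(r_s)+\widetilde\psi(t_s)$; sub-additivity of $\widetilde\psi$ then gives
\begin{align*}
\widetilde\psi(s)=\widetilde\psi(r_s+t_s)\leq\widetilde\psi(r_s)+\widetilde\psi(t_s)=\psi^c(s).
\end{align*}
For the upper bound, I would simply test the minimum in Definition \ref{psic} with the symmetric splitting $s=\tfrac s2+\tfrac s2$, obtaining
\begin{align*}
\psi^c(s)\leq 2\,\widetilde\psi\!\left(\tfrac s2\right).
\end{align*}

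Combining the two displays and dividing by $s>0$ yields
\begin{align*}
\frac{\widetilde\psi(s)}{s}\leq\frac{\psi^c(s)}{s}\leq\frac{\widetilde\psi(s/2)}{s/2}.
\end{align*}
Letting $s\to+\infty$, both the left-hand and the right-hand sides converge to $\widetilde\theta$ (for the right-hand side, set $r=s/2\to+\infty$), so by the squeeze the limit $\lim_{s\to+\infty}\psi^c(s)/s$ exists and equals $\widetilde\theta$; that is, $\theta^c$ is well defined and $\theta^c=\widetilde\theta$. I do not anticipate a genuine obstacle here: the only point requiring care is the existence of the limits defining $\widetilde\theta$ and $\theta^c$, which is handled by the convexity of $\widetilde\psi$ (for $\widetilde\theta$) and by the squeeze argument (for $\theta^c$); everything else reduces to sub-additivity of $\widetilde\psi$ and the attainment of the minimum in Definition \ref{psic}.
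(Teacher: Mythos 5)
Your argument is correct and self-contained. Note that the paper itself does not prove this lemma; it cites \cite[Lemma 5.2]{CriFis} and so contains no proof to compare against. Your route is the natural one: sub-additivity of $\widetilde\psi$ together with attainment of the minimum in Definition~\ref{psic} gives $\widetilde\psi\leq\psi^c$, the symmetric splitting gives $\psi^c(s)\leq 2\widetilde\psi(s/2)$, and dividing by $s$ and letting $s\to+\infty$ squeezes $\theta^c$ onto $\widetilde\theta$. The only points worth spelling out are the ones you already flag: (a) $\widetilde\theta$ is well defined because $\widetilde\psi$ is convex with $\widetilde\psi(0)$ finite, so $(\widetilde\psi(s)-\widetilde\psi(0))/s$ is monotone; and (b) the minimum in Definition~\ref{psic} is attained because $\widetilde\psi$, being convex and finite on $[0,+\infty)$, is lower semicontinuous and the constraint set $\{r+t=s,\ r,t\geq 0\}$ is compact. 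One small cosmetic remark: in your parenthetical you write ``if $s_0=+\infty$ then $\widetilde\psi=\psi^{\mathrm{cvx}}$''; Lemma~\ref{lem:char_psi_s0} actually gives $\widetilde\psi=\widetilde{\psi^{\mathrm{cvx}}}$ in general and $\widetilde\psi(s)=\psi(s)$ for $s\leq s_0$, but this side remark is not used in the main chain of inequalities, which stands on its own.
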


\subsection{Functions of (pointwise) bounded variation}
In order to define the relaxed functional, we need to introduce some notation about bounded variation functions. More details on the topic can be found in \cite{AFP} and \cite{Leoni_book}. We introduce the following limits
\begin{align*}
	h^-(x)=\liminf_{y\to x} h(y),\quad\quad\quad h^+(x)=\limsup_{y\to x} h(y).
\end{align*}
In particular, if $x\in(a,b)$ is a point of continuity for $h$, then $h^-(x)=h^+(x)=h(x)$. 
\begin{definition}\label{cuts}
	Let $h\in \bv(a,b)$. We call
	\[
	\Gamma_h \coloneqq \left\{(x,y)\in\R^2 \, |\, x\in(a,b),\, h(x)\leq y \leq h^+(x)\right\}
	\]
	the \emph{extended graph} of $h$.
	Moreover, we define: \vspace{0.2cm}
	\begin{itemize}
		\item[$(i)$] The \emph{jump part} of $\Gamma_h$ as
		\[
		\Gamma^j_h\coloneqq\{(x,y)\in\R^2 \, |\, x\in(a,b),\, h^-(x)\leq y<h^+(x)\};
		\]
		\item[$(ii)$]  The \emph{cut part} of $\Gamma_h$ as
		\[
		\Gamma^c_h\coloneqq\{(x,y)\in\R^2 \, |\, x\in(a,b),\, h(x)\leq y<h^-(x)\};
		\]
		\item[$(iii)$]  The \emph{regular part} of $\Gamma_h$ as
		\[
		\Gamma^\rr_h\coloneqq \Gamma_h\setminus (\Gamma_h^j\cup\Gamma_h^c).
		\]
	\end{itemize}
	Moreover, we introduce the notation $\widetilde{\Gamma}_h\coloneqq \Gamma^\rr_h\cup\Gamma^j_h$.
\end{definition}

\begin{remark}
	Note that
	\[
	\Gamma_h=\widetilde{\Gamma}_h\cup \Gamma_h^c
	=\Gamma_h^\rr\cup\Gamma_h^j\cup\Gamma_h^c,
	\]
	holds for every $h\in \bv(a,b)$.
	Moreover, when there is no room for confusion, we will drop the suffix $h$ in the notation above. 
\end{remark}
We now recall two definitions that has been largely used in \cite{CriFis}.
\begin{definition}\label{deltacoverdef}
	Let $h\in\bv(a,b)$, and $\delta>0$. We say that a finite family $(R^j)_{j=1}^N$ of open and pairwise disjoint rectangles is \emph{$\delta$-admissible cover} for $\Gamma$, if
	\begin{itemize}
		\item[(i)] The side lengths of each $R^j$'s is less than $\delta$;
		\item [(ii)]It holds
		\[
		\Gamma \subset \bigcup_{i=1}^N \overline{R}^j;
		\]
		\item[(iii)] $\ho(\Gamma\cap\partial R^j)=0$  for all $j=1,\dots, N$.
	\end{itemize}
\end{definition}
It is possible to show that each bounded variation function admits ad $\delta$-admissible cover.
\begin{definition}\label{gridconstant}
	Let $h\in\bv(a,b)$, and $\delta>0$. A function $u\in L^1(\Gamma)$ is called \emph{$\delta$-grid constant} if there exists a $\delta$-admissible cover for $\Gamma$, such that $u_{|R^j}=u^j\in\R$, for every $j=1,\dots, N$.
	Moreover, we say that $u\in L^1(\Gamma)$ is \emph{grid constant} if there exists $\delta>0$ such that it is $\delta$-grid constant.
\end{definition}

\section{Mathematical setting}\label{mathseting}	

This section is devoted to introduce the setting for the functionals which are object of the study.\\
Let $a<b$ and consider a non-negative function $h:(a,b)\to[0,+\infty)$ such that $h\in\bv(a,b)$. We define the sub-graph of $h$ as
\begin{align*}
	\O\coloneqq\{\bx=(x,y)\in\R^2\, |\, y<h(x)\},
\end{align*}
and its graph as
\begin{align*}
	\Gamma\coloneqq\partial\Omega\cap (a,b)\times\R.
\end{align*}
Let the displacement be a function $v\in H^1(\O;\R^2)$. The starting configuration of the crystalline substrate and that of the deposited film are represented by a function $E_0:\R{} \to \R^{2\times 2}$, defined as
\begin{align*}
	E_0(y)\coloneqq\begin{cases}
		t e_1 \otimes e_1\quad&\text{if}\ y\geq0 ,\\[5pt]
		0\quad&\text{if}\ y<0.
	\end{cases}
\end{align*}
Here, $t>0$ is a constant depending on the lattice of the substrate, and $\{e_1,e_2\}$ is the canonical basis of $\R^2$.
We assume that the crystalline structure of the film and the substrate have a small mismatch, which mathematically can be rephrased as $|t|\ll1$.
Such an assumption allows us to work in the framework of linearised elasticity. 
In particular, the relevant object needed to compute the elastic energy is the symmetric gradient of the displacement
\begin{align*}
	E(v)\coloneqq\frac{1}{2}(\nabla v + \nabla^\top v),
\end{align*}
where $\nabla^\top v$ is the transposed of the matrix $\nabla v$. Finally, we assume that the substrate and the film share similar elastic properties, so they are described by the same positive definite elasticity tensor $\mathbb{C}=(c_{ijnm})_{i,j,n,m=1}^2$. The elastic energy density is given by  $W:\R^{2\times 2}\to \R$, defined as
\begin{align*}
	W(A)\coloneqq\frac{1}{2} A\cdot \mathbb{C}  [A]
	= \frac{1}{2}\sum_{i,j,m,n=1}^2 c_{ijnm} a_{ij}a_{nm},
\end{align*}
for a $2\times 2$ matrix $A=(a_{ij})_{i,j=1}^2$. In addition we require $W$ to be a positive quadratic form, namely
\begin{align}\label{quadraticform}
	W(A)>0,
\end{align}
for every $A\neq0$.\\

 Let the adatom density be a Radon measure $\mu$, that we decompose as
\begin{align*}
	\mu=u\cH^1\llcorner\Gamma+\mu^s,
\end{align*} 
where $\mu^s$ is the singular part of $\mu$ with respect to the measure $\mathcal{H}^1$. 
\subsection{Sharp model.}\label{sharpmodel} Here, we introduce the definitions of the configurations taken into account in our model.
\begin{definition}
	We say that the triplet $(\O,v,\mu)$ is an \emph{admissible sharp configuration} for $\cG$ if $\O$ is the sub-graph of $h\in\bv(a,b)$, $v\in H^1(\O;\R^2)$ with $v=0$ on $Q\setminus \O$, and $\mu=u\cH^1\llcorner\Gamma+\mu^s$, with $u\in L^1(\Gamma)$. We denote the set of admissible sharp configurations by $\cA$. 
\end{definition}
\begin{definition}
	An admissible sharp configuration $(\O,v,\mu)$ is called \emph{regular} if $\O$ is the sub-graph of a Lipschitz function and $\mu=u\cH^1\llcorner\Gamma$ with $u\in L^1(\Gamma)$. The set of such configurations is denoted by $\cA_r$.
\end{definition}
The next definition introduces configurations which satisfies the mass constraints.
\begin{definition}
	Given $M,m>0$, we say that $(\O,v,\mu)\in \cA(m,M)$ if $(\O,v,\mu)\in\cA$ and
	\begin{align}\label{constraints}
		\int_a^b h(x)\ \de x=M\quad\text{and}\quad \mu(\R^2)=m.
	\end{align}
	In a similar way we say that $(\O,v,\mu)\in\cA_r(m,M)$ if $(\O,v,\mu)\in\cA_r$ and \eqref{constraints} holds.
\end{definition}
Now, we are in position to give the definitions of the functional treated in \cite{CriFis}. Let  $\psi:[0,\infty)\to(0,\infty)$ be a Borel function with $\inf_{s\geq0}\psi(s)>0$.
\begin{definition}\label{hfunctional}
We define the functional $\cH:\cA\to[0,+\infty]$ as
	\begin{align*}
		\cH(\O,v,\mu)\coloneqq\begin{cases}
			\displaystyle\int_{\O}W\big(E(v)-E_0(y)\big)\ \de \textbf{x}+\int_{\Gamma}\psi(u)\dho\quad&(\O,v,\mu)\in\cA_r,\\[10pt]
			+\infty\quad&\text{else}.
		\end{cases}
	\end{align*}
and $\cH^{m,M}:\cA\to[0,+\infty]$ as
	\begin{align*}
	\cH^{m,M}(\O,v,\mu)\coloneqq\begin{cases}
		\displaystyle\int_{\O}W\big(E(v)-E_0(y)\big)\ \de \textbf{x}+\int_{\Gamma}\psi(u)\dho\quad&(\O,v,\mu)\in\cA_r(m,M),\\[10pt]
		+\infty\quad&\text{else}.
	\end{cases}
		\end{align*}
\end{definition}
\begin{definition}\label{Ffunctional}
We define the functional $\cF:\cA\to[0,+\infty]$ as
\begin{align*}
	\cF(\O,v,\mu)\coloneqq\begin{cases}
		\cG(\O,v,\mu)\quad&\text{if } (\O,v,\mu)\in\cA,\\[5pt]
		+\infty\quad&\text{else},
	\end{cases}
\end{align*}
where
\begin{align*}
	\cG(\O,v,\mu)\coloneqq\int_{\O}W\big(E(v)-E_0(y)\big)\ \de \textbf{x}+\int_{\widetilde{\Gamma}}\widetilde{\psi}(u)\dho+\int_{\Gamma^c}\psi^c(u)\dho+\theta\mu^s(\R^2).
\end{align*}
Moreover, define the functional $\cF^{m,M}:\cA\to[0,+\infty]$ as
\begin{align*}
	\cF^{m,M}(\O,v,\mu)\coloneqq\begin{cases}
		\cG(\O,v,\mu)\quad&\text{if } (\O,v,\mu)\in\cA(m,M),\\[5pt]
		+\infty\quad&\text{else},
	\end{cases}
\end{align*}
\end{definition} 
Throughout the entire paper we will use the convergence sequences of the form $(\eps_n)_{n\in\N}$. Every time we would write $\eps_n\to0$ as $n\to\infty$, in order to enlighten the notation, we write $\eps\to0$ instead. However, since the main results of this paper do not depend on a subsequence, we can replace the discrete parameter with a continuous one and get the same results. \\

We recall the definition of Hausdorff distance.
\begin{definition}
	Let $E,F\subset\R^N$. We define
	\[
	\mathrm{d}_H(E,F)\coloneqq \inf\{ r>0 \,|\, E\subset F_r,\, F\subset E_r \},
	\]
	where, for $A\subset\R^N$ and $r>0$, we set $A_r\coloneqq \{ x+y \,|\, x\in A,\  y\in B_r(0) \}$.
	Moreover, we say that a sequence of compact sets $(E_\eps)_\eps\subset\R^N$ \emph{Hausdorff converges} to a set $E\subset\R^N$, and we write $E_\eps\stackrel{H}{\rightarrow} E$, if $\mathrm{d}_H(E_\eps,E)\to0$ as $\eps\to0$.
\end{definition}
The relaxation result in \cite{CriFis} is made with respect the topology induced by the following definition of convergence. 
\begin{definition}\label{topologyoldpaper}
	 Let $(\O,v,\mu)\in \cA$ and consider a sequence $(\O_\eps,v_\eps,\mu_\eps)_\eps\subset\cA$. We say that $(\O_\eps,v_\eps,\mu_\eps)\to(\O,v,\mu)$, as $\eps\to0$, if
\begin{enumerate}
\item[$(i)$]  $\R^2\setminus \O_\eps\stackrel{H}{\to} \R^2\setminus \O$ in the Hausdorff distance;
\item[$(ii)$] $v_\eps\wto v$ weakly in $H^1_{\text{loc}}(\O;\R^2)$;
\item[$(iii)$] $\mu_\eps\wsto\mu$ in the $\ast$-weak topology.
\end{enumerate}
\end{definition}
\begin{remark}
The Hausdorff convergence $\R^2\setminus \O_\eps\stackrel{H}{\to} \R^2\setminus \O$ is made in such a way that internal fractures are visible inside the film. If we choose $\O_\eps\stackrel{H}{\to}\O$,  we would obtain vertical cuts outside the crystal, which has no physical meaning.
\end{remark}
The main result achieved in \cite{CriFis} is the following.
\begin{theorem} The following statements hold true.
\begin{enumerate}
\item[$(i)$] The functional $\cF$ is the relaxation of $\cH$ with respect to the topology in Definition \ref{topologyoldpaper};
\item[$(ii)$] The functional $\cF^{m,M}$ is the relaxation of $\cH^{m,M}$ with respect to the topology in Definition \ref{topologyoldpaper}.
\end{enumerate}
\end{theorem}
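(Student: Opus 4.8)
The plan is to identify $\cF$ (resp. $\cF^{m,M}$) as the relaxation of $\cH$ (resp. $\cH^{m,M}$) for the topology of Definition~\ref{topologyoldpaper} by proving separately the lower-bound ($\liminf$) and upper-bound ($\limsup$) inequalities. Since $\cH$ is finite only on $\cA_r$, in the $\liminf$ inequality it suffices to consider sequences $(\O_\eps,v_\eps,\mu_\eps)\in\cA_r$ of equibounded energy, while the $\limsup$ inequality amounts to producing, for every $(\O,v,\mu)\in\cA$, a recovery sequence in $\cA_r$. I would prove the unconstrained statement (i) first and deduce (ii) afterwards: given a recovery sequence for (i), one adds to $h_\eps$ a vanishing constant and multiplies $u_\eps$ by a factor tending to $1$ so as to enforce $\int_a^b h_\eps\,\de x=M$ and $\mu_\eps(\R^2)=m$ exactly, at an asymptotically negligible cost, using the continuity of $\psi$ and the stability of the bulk term under such perturbations.

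\textbf{The $\liminf$ inequality.} Let $(\O_\eps,v_\eps,\mu_\eps)\in\cA_r$ converge to $(\O,v,\mu)$ with $\sup_\eps\cH(\O_\eps,v_\eps,\mu_\eps)<\infty$. For the bulk term, fix $K\ccc\O$; by the Hausdorff convergence of the complements one has $K\subset\O_\eps$ for $\eps$ small, $E(v_\eps)\wto E(v)$ in $L^2(K;\R^{2\times2})$, and $A\mapsto\int_K W(A-E_0(y))\,\de\bfx$ is convex, hence weakly lower semicontinuous; letting $K\nearrow\O$ gives the bulk bound. For the surface term I would use a localization/blow-up argument: the measures $\psi(u_\eps)\,\ho\llcorner\Gamma_\eps$ are equibounded, so (up to subsequence) converge weakly-$\ast$ to some $\lambda$, and it suffices to prove $\lambda\geq\widetilde{\psi}(u)\,\ho\llcorner\widetilde{\Gamma}+\psi^c(u)\,\ho\llcorner\Gamma^c+\theta\,\mu^s$. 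Testing on the rectangles of a $\delta$-admissible cover (Definition~\ref{deltacoverdef}) and distinguishing three regimes: on a piece of the regular graph $\widetilde{\Gamma}$ the $\Gamma_\eps$ converge as graphs and a one-dimensional relaxation (as in \cite{CarCri}, \cite{CarCriDie}) produces $\widetilde{\psi}(u)$; on a cut the Lipschitz graphs must descend and re-ascend, so $\Gamma_\eps$ splits there into two near-vertical sheets whose densities sum in the limit to $u$, yielding $\widetilde{\psi}(r)+\widetilde{\psi}(t)\geq\psi^c(u)$ by Definition~\ref{psic}; and the part of the mass which is singular with respect to $\ho$ costs $\theta$ per unit mass via the recession behaviour encoded in Lemmas~\ref{lem:char_psi_s0} and~\ref{lem:thetac_thetawild}. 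Summing over the cover and letting $\delta\to0$ gives the lower bound by $\cG$.

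\textbf{The $\limsup$ inequality.} By the continuity of $\cG$ along a dense class of configurations and a diagonal argument, it is enough to build recovery sequences when $h$ is piecewise affine with finitely many jumps and $u$ is grid constant (Definition~\ref{gridconstant}). For such data I would: over the regular part of the graph, realize $\widetilde{\psi}(u)$ by the standard oscillation construction alternating two grid-constant density levels with mean $u$ and mean energy $\widetilde{\psi}(u)$; at each cut point $x_0$ replace the vertical slit of height $h^-(x_0)-h(x_0)$ by a thin Lipschitz notch with two near-vertical walls carrying densities $r_\eps,t_\eps$ with $r_\eps+t_\eps\to u$ attaining the minimum defining $\psi^c(u)$ — the notch width vanishes, so the complements Hausdorff-converge correctly and the fracture becomes visible inside the film, while the two walls contribute $\int\psi^c(u)\dho$; and recover $\mu^s$ by spreading it over a vanishing portion of the graph with a profile saturating the recession of $\widetilde{\psi}$, giving $\theta\,\mu^s(\R^2)$. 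The displacement $v$ (with $v=0$ on $Q\setminus\O$) is transplanted to the subgraphs $\O_\eps$, equal to $v$ away from the cuts and cut off to a bounded extension on the thin modified regions, so that $v_\eps\wto v$ in $H^1_{\loc}(\O;\R^2)$ and $\int_{\O_\eps}W(E(v_\eps)-E_0)\,\de\bfx\to\int_\O W(E(v)-E_0)\,\de\bfx$; the $o(\eps)$-type regularization is not needed here and only becomes relevant for the phase-field functional. For (ii) one finally applies the volume/mass correction described above.

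\textbf{Main obstacle.} The delicate point in both inequalities is the cut term $\int_{\Gamma^c}\psi^c(u)\dho$. The lower bound requires controlling how Lipschitz subgraphs carrying an adatom density approximate a vertical fracture and identifying the optimal splitting $u=r+t$ of the density between the two approaching sheets — exactly what $\psi^c$ encodes through Definition~\ref{psic} and Lemma~\ref{lem:thetac_thetawild}. The upper bound requires the notch construction to be simultaneously compatible with the Hausdorff convergence of the complements, the weak $H^1_{\loc}$ convergence of the displacement across the fracture, and (for (ii)) the prescribed volume and mass. It is precisely this coupling between the surface density and the fracture geometry that makes the earlier strategies for modelling vertical cuts — which ignore the surface variable — inapplicable, and which forces the use of the sheet-splitting bookkeeping via $\psi^c$ together with the notch construction above.
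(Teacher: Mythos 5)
The paper does not itself prove this theorem: it is quoted verbatim as the main result of the prior work \cite{CriFis}, and the present paper only invokes Propositions 7.6, 7.7 and 7.8 of that reference (see Theorems \ref{limsuptheorem} and \ref{wrigglingthm}) when the corresponding approximation steps are reused in the phase-field argument. Your sketch — liminf by localization on a $\delta$-admissible cover, with the left/right sheet-splitting at cuts producing $\psi^c$ via Definition \ref{psic} and Lemma \ref{lem:thetac_thetawild}; limsup by reduction to grid-constant $u$, the wriggling construction for $\widetilde\psi$, and a vanishing-width notch for the cut contribution; and a separate volume/mass correction for (ii) — matches the strategy the introduction explicitly attributes to \cite{CriFis}, so it is consistent with the paper's (implicit) approach. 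One small refinement worth making explicit: for the constrained case the paper's own constructions enforce volume not by translating $h_\eps$ (which could push the profile below zero and shifts the displacement domain) but by a vertical dilation, as in the rescaling $\alpha_\eps$ of Theorem \ref{realtheoremlimsup}; the bulk-energy stability under that dilation then needs the quadratic continuity estimate used in \eqref{W1}--\eqref{W3}.
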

\subsection{Phase-field model.}\label{phasefieldsubsection} We now introduce the phase-field functional that will approximate $\cG$ in terms of the $\Gamma-$convergence. \\
First, we choose a double-well potential $P$, which detects whether we are o not on a cut of $h$.
\begin{definition}\label{potential}
	Let $P:\R{}\to\R^+$ be a continuous function such that
\begin{enumerate}
	\item[$(i)$]  $P^{-1}(0)=\{0,1\}$;
	\item[$(ii)$] There exist $r,C>0$ such that
	\begin{align*}
	P(t)\geq C\abs{t},
	\end{align*}
	for every $\abs{t}>r$.
\end{enumerate} 
\end{definition}
Consider
\begin{align*}
	Q\coloneqq(a,b)\times\R{}\quad\text{and}\quad
	Q^+\coloneqq Q\cap \{y>0\}.
\end{align*}
In the following definition, we define the phase field variable $w\in H^1(Q^+;[0,1])$ and the admissible configurations that will be used in the phase-field formulation.
\begin{definition}
	We say that the triplet $(w,v,u)$ is an \emph{admissible phase-field configuration} if $w\in H^1(Q^+)$ is such that $0\leq w\leq 1$, $v\in H^1(Q;\R^2)$ and $u\in L^1(\R^2)$. We denote the set of admissible phase-field configurations by $\cA_p$. \\
	Given $M,m>0$, we say that $(w,v,u)\in \cA_p(m,M)$ if $(w,v,u)\in\cA_p$ and
	\begin{align*}
		\int_{Q^+} w\ \de \bfx=M\quad\text{and}\quad \int_{Q^+}u\ \de\bfx=m.
	\end{align*}
\end{definition}
The phase-field functional is given in the following definition.
\begin{definition}
We define the functional $\cF_\eps:H^1(Q^+)\times H^1(Q;\R^2)\times L^1(\R^2)\to[0,+\infty]$ as
\begin{align*}
	\cF_\eps(w,v,u)\coloneqq\begin{cases}
		\cG_\eps(w,v,u)\quad&\text{if } (w,v,u)\in\cA_p,\\[5pt]
		+\infty\quad&\text{else},
	\end{cases}
\end{align*}
where
	\begin{align*}
		\cG_\eps(w,v,u)\coloneqq\int_{Q^+}(w+\eta_\eps) W\big(E(v)-E_0(y)\big)\ \de\textbf{x}
		+	\frac{1}{\sigma}\int_{Q^+} \Big[\eps|\nabla w|^2+\frac{1}{\eps}P\big(w\big)\Big]\psi(u)\ \de\textbf{x},
\end{align*}
with $\eta_\eps=o(\eps)$, as $\eps\to0$, and
\begin{align*}
	\sigma\coloneqq2\int_{0}^{1}\sqrt{P(t)}\ \de t. 
\end{align*}
Additionally, we define the functional with the mass constraint  $\cF^{m,M}_\eps:H^1(Q^+)\times H^1(Q;\R^2)\times L^1(\R^2)\to[0,+\infty]$, as
\begin{align*}
	\cF^{m,M}_\eps(w,v,u)\coloneqq\begin{cases}
		\cG_\eps(w,v,u)\quad&\text{if } (w,v,u)\in\cA_p(m,M),\\[5pt]
		+\infty\quad&\text{else}.
	\end{cases}
\end{align*}
\end{definition}


\section{Main result}
In this section, we state the main results. First, we need introduce the notion of convergence we will use.
\begin{definition}\label{topology}
	We say that $(w_\eps,v_\eps,u_\eps)_\eps\subset \cA_p$ converges to $(\O,v,\mu)\in \cA$, and we write $(w_\eps,v_\eps,u_\eps)\to(\O,v,\mu)$ as $\eps\to0$, if
	\begin{enumerate}
		\item[$(i)$] $w_\eps\to \chi_\O$ in $L^1_\loc(Q^+)$, where $\chi_\O$ is the indicator function of $\O$;
		\item[$(ii)$] $v_\eps\to v$ in $L^2_\loc(Q)$;
		\item[$(iii)$] $\mu_\eps\wsto \mu$ $\ast$-weak in the sense of measures,
		where
		\begin{align*}
			\mu_\eps\coloneqq \frac{u_\eps}{\sigma}\Big( \varepsilon |\nabla w_\eps|^2+\frac{1}{\varepsilon} P(w_\eps)  \Big)\mathcal{L}^2\llcorner Q^+.
		\end{align*}
	\end{enumerate}
\end{definition}
	\begin{remark}
	Thanks to Proposition \ref{equicoercivenessgamma}, the configuration space $\cA_p$ is a metric space. 
\end{remark}
The main results of this paper are the following.
\begin{theorem}\label{mainresult1}
 $\cF_\eps\stackrel{\Gamma}{\longrightarrow}\cF$, as $\eps\to0$, with respect to the topology in Definition \ref{topology};
\end{theorem}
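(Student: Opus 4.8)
The plan is to establish the two conditions of Theorem \ref{firstaxiomgammaconvergence}: the $\Gamma$-liminf inequality (lower bound) and the existence of a recovery sequence (upper bound), each with respect to the topology of Definition \ref{topology}. Since only finite-energy configurations matter, I may assume throughout that the competitors have bounded energy $\sup_\eps \cG_\eps(w_\eps,v_\eps,u_\eps)<\infty$, which immediately gives: (a) a uniform bound on the Modica–Mortola-type term $\frac1\sigma\int_{Q^+}[\eps|\nabla w_\eps|^2+\frac1\eps P(w_\eps)]\psi(u_\eps)$, and hence (using $\inf\psi>0$) a bound on $\int_{Q^+}[\eps|\nabla w_\eps|^2+\frac1\eps P(w_\eps)]$, forcing $w_\eps$ to concentrate near the wells $\{0,1\}$; (b) a bound on $\int_{Q^+}(w_\eps+\eta_\eps)W(E(v_\eps)-E_0(y))$; and (c) a uniform bound on $\mu_\eps(\R^2)$, so that by Proposition \ref{equicoercivenessgamma} we may pass to a $\ast$-weakly converging subsequence of $\mu_\eps$.

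\textbf{Lower bound.} Given $(w_\eps,v_\eps,u_\eps)\to(\O,v,\mu)$, I would split $\mu=u\ho\llcorner\Gamma+\mu^s$ and treat the three pieces of $\cG$ separately. For the elastic term, since $w_\eps\to\chi_\O$ in $L^1_{\loc}$ and $v_\eps\to v$ in $L^2_{\loc}$, a lower-semicontinuity argument (combined with weak $H^1_{\loc}$ compactness of $v_\eps$ inside $\O$, recovered from the energy bound where $w_\eps\approx1$, and the convexity of $W$) yields $\int_\O W(E(v)-E_0(y))\le\liminf\int_{Q^+}(w_\eps+\eta_\eps)W(E(v_\eps)-E_0(y))$; this is essentially the Bonnetier–Chambolle argument from \cite{BonCha02}. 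For the surface term, the key is a localisation/blow-up analysis: on the regular and jump parts $\widetilde\Gamma$ one recovers $\int_{\widetilde\Gamma}\widetilde\psi(u)\dho$ via the one-dimensional Modica–Mortola profile weighted by $\psi(u_\eps)$, using that the optimal transition costs $\sigma$ and that the convex sub-additive envelope $\widetilde\psi$ arises from the interplay of mass splitting along the profile, exactly as in \cite{CarCri}; on the cut part $\Gamma^c$ the phase field must traverse from $1$ down to $0$ and back up to $1$ (the substrate is exposed and then recovered), which produces the doubled transition and hence the cost $\psi^c(u)=\min\{\widetilde\psi(r)+\widetilde\psi(t):s=r+t\}$, this being the new ingredient adapted from \cite{CriFis}; finally the singular part $\mu^s$ contributes $\theta\mu^s(\R^2)$ by the recession behaviour of $\widetilde\psi$ from Lemma \ref{lem:char_psi_s0} and Lemma \ref{lem:thetac_thetawild}. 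The $\delta$-admissible covers and $\delta$-grid-constant approximations (Definitions \ref{deltacoverdef}, \ref{gridconstant}) are the technical device that lets one reduce to finitely many rectangles on each of which $u$ is constant, glue the local estimates, and then let $\delta\to0$.

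\textbf{Upper bound.} By a density/diagonalisation argument it suffices to build recovery sequences for a dense class of configurations — Lipschitz sub-graphs $\O$ with grid-constant $u$ and $\mu^s=0$ first, then pass to general $(\O,v,\mu)\in\cA$ by approximation in the topology of Definition \ref{topology} together with lower semicontinuity of $\cF$ (which follows from the already-proved liminf inequality, or directly from \cite{CriFis}). For the model case, I would explicitly define $w_\eps$ using the optimal Modica–Mortola profile: near $\widetilde\Gamma$ a single transition $0\to1$ realising cost $\sigma$, near $\Gamma^c$ a double transition realising the optimal split in the definition of $\psi^c$, and take $v_\eps$ equal to $v$ extended suitably (so that the elastic term converges) and $u_\eps$ a smooth approximation of $u$ on the transition layers arranged to carry the correct mass. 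One then checks the three convergences of Definition \ref{topology} and that $\limsup\cG_\eps\le\cG$.

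\textbf{Main obstacle.} I expect the hard part to be the surface-term analysis on the cut part $\Gamma^c_h$ in the lower bound: one must show that any phase-field sequence which, in the limit, reconstructs a vertical fracture is genuinely forced to pay the doubled, optimally-split cost $\psi^c$ rather than merely $\widetilde\psi$ — this requires carefully tracking, via the co-area formula and the structure of the sub-graph constraint $w_\eps\to\chi_{\O}$, that $w_\eps$ returns to the value $1$ on the exposed-substrate side, and then optimising the mass distribution of $u_\eps$ between the two transitions. Handling the interaction of this analysis with the mass constraints is deferred to Theorem \ref{mainresult2}, so here the delicate point is purely the cut-part matching, for which the $\delta$-admissible cover machinery of \cite{CriFis} is the essential tool.
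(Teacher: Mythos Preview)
Your overall architecture is correct, and in particular the bulk-term lower bound and the application of the Caroccia--Cristoferi result on the regular part $\widetilde\Gamma$ match the paper. The limsup strategy by reduction to a dense class is also what the paper does, though note that the paper first reduces (via the approximation results of \cite{CriFis}, recorded here as Theorems~\ref{limsuptheorem} and~\ref{wrigglingthm}) all the way to \emph{regular} configurations with $\Gamma^c=\emptyset$ before constructing the phase-field recovery sequence. Hence no ``double transition near $\Gamma^c$'' is ever built at the diffuse level; that work has already been absorbed into the sharp-interface approximation, and the actual construction (Theorem~\ref{realtheoremlimsup}) only ever uses a single optimal profile across a smooth interface.

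The genuine gap is in your liminf argument on the cut part. Your proposed mechanism --- ``the phase field must traverse from $1$ down to $0$ and back up to $1$ (the substrate is exposed and then recovered)'' --- is not forced by the hypotheses. Since $\Gamma^c$ has two-dimensional Lebesgue measure zero, the convergence $w_\eps\to\chi_\O$ in $L^1_{\loc}$ only tells you that $w_\eps\to 1$ a.e.\ in any rectangle around the cut; a co-area or level-set argument therefore sees no transition whatsoever, and cannot by itself produce the cost $\psi^c$. What \emph{does} force a contribution on $\Gamma^c$ is the measure convergence $\mu_\eps\wsto\mu$ together with the fact that $\mu$ carries mass on $\Gamma^c$. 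The paper's device --- imported from \cite{CriFis} --- is to enclose each cut segment (finitely many, after excising the set $C_\xi$) in a thin rectangle $R$ and split $R=R^\ell\cup R^\rr$ along the cut line $\{x=x^c\}$. One then shows that $\mu_\eps\llcorner R^\ell\wsto f\,\ho\llcorner(\Gamma^c\cap R)+(\mu^s)^\ell$ and symmetrically on the right with a density $g$, where $f+g=u$. Applying the Caroccia--Cristoferi liminf inequality (Theorem~\ref{cctheorem}) \emph{separately} on $R^\ell$ and on $R^\rr$ produces $\int\widetilde\psi(f)\,\de\ho$ and $\int\widetilde\psi(g)\,\de\ho$, and the pointwise inequality $\widetilde\psi(f)+\widetilde\psi(g)\geq\psi^c(f+g)=\psi^c(u)$ from Definition~\ref{psic} closes the argument. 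In short, the doubling you anticipate does occur, but it is manufactured by \emph{geometric localisation of the limit measure into a left and a right piece}, not by tracking values of $w_\eps$; replace your co-area sketch by this splitting and the proof goes through.
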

The class of configurations for which we have compactness is explained in the following theorem.
\begin{theorem}\label{compactness}
	Let $(w_\eps,v_\eps,u_\eps)_\eps\subset \cA_p(m,M)$ be a sequence such that
	\begin{align*}
		&\sup_{\eps>0}\ 	\cF_\eps(w_\eps,v_\eps,u_\eps)<\infty,\\[5pt]	
		&\sup_{\eps>0}\int_{Q^+} \abs{E(v_\eps)}^2\ \de\bfx<\infty,\\[5pt]
		&\nonumber\mu_\eps\coloneqq u_\eps\cL^2\llcorner Q^+\stackrel{\ast}{\wto} \mu,
	\end{align*}
	as $\eps\to0$, with
	\begin{align*}
		\supp \mu\subset \Gamma.
	\end{align*} 
	Then, there exists $( \O,v,\mu)\in\cA(m,M)$ such that 
	\begin{align*}
		(w_\eps,v_\eps,u_\eps)\to(\O,v,\mu),
	\end{align*}
	as $\eps\to0$.
\end{theorem}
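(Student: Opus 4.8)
The plan is to extract, from the bounded-energy sequence $(w_\eps,v_\eps,u_\eps)_\eps$, a limiting sharp configuration $(\O,v,\mu)\in\cA(m,M)$ realising the three convergences of Definition \ref{topology}, and to identify $\O$ as the sub-graph of some $h\in\bv(a,b)$. First I would treat the phase variable: from $\sup_\eps\cF_\eps(w_\eps,v_\eps,u_\eps)<\infty$ and the lower bound $\inf_{s\geq0}\psi(s)>0$, the Modica--Mortola term $\int_{Q^+}[\eps|\nabla w_\eps|^2+\tfrac1\eps P(w_\eps)]\ \de\bfx$ is bounded. A standard Modica--Mortola/compactness argument (as in \cite{AmbTor,BonCha02}) then gives, up to subsequence, $w_\eps\to\chi_F$ in $L^1_\loc(Q^+)$ for some set $F\subset Q^+$ of finite perimeter; the term $\tfrac1\eps\int P(w_\eps)\to0$ forces $w_\eps\to\{0,1\}$ a.e. The mass constraint $\int_{Q^+}w_\eps=M$ passes to the limit on bounded slabs via the uniform $L^\infty$-bound $0\leq w_\eps\leq1$ and dominated convergence, giving $|F\cap Q^+|=M$; since $M<\infty$ and the film is attached to $\{y=0\}$, the set $F$ is contained in a horizontal strip, and we set $\O\coloneqq F\cup(Q\cap\{y<0\})$.

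The key structural step is to show $\O$ is the sub-graph of a $\bv$ function $h:(a,b)\to[0,+\infty)$. Here I would use the hypothesis $\supp\mu\subset\Gamma$ crucially, where $\Gamma=\partial\O\cap(a,b){\times}\R$: the boundary of $F$ in the interior is concentrated where the surface energy lives, and the assumption $\supp\mu_\eps\stackrel{\ast}{\wto}\mu$ with $\mu$ supported on $\Gamma$, together with $u_\eps\geq 0$ and the mass bound $\int u_\eps=m$, confines the surface structure. The monotonicity-in-$y$ of the limiting phase (the film occupies a region below its free surface — this is built into the geometry of $Q^+$ and the fact that $E_0$ and hence the elastic energy only sees $y>0$) yields that $\O\cap\{x=x_0\}$ is a half-line $\{y<h(x_0)\}$ for a.e.\ $x_0$; finiteness of the perimeter of $F$ then gives $h\in\bv(a,b)$. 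I would phrase this via a slicing argument on vertical lines, exactly as the sub-graph structure is extracted in \cite{BonCha02,CriFis}.

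Next, the displacement: from $\sup_\eps\int_{Q^+}|E(v_\eps)|^2\ \de\bfx<\infty$ and Korn's inequality on compactly contained Lipschitz subdomains of $\O$ (where, for $\eps$ small, $w_\eps$ is bounded below and the elastic coefficient $w_\eps+\eta_\eps$ is uniformly positive), one gets $v_\eps$ bounded in $H^1_\loc(\O;\R^2)$, hence $v_\eps\to v$ in $L^2_\loc(Q)$ up to subsequence, with $v\in H^1(\O;\R^2)$ and $v=0$ on $Q\setminus\O$ inherited from the constraint on $v_\eps$. Finally, for the measures: $\mu_\eps=\tfrac{u_\eps}\sigma(\eps|\nabla w_\eps|^2+\tfrac1\eps P(w_\eps))\cL^2\llcorner Q^+$ has total mass controlled by $\cF_\eps(w_\eps,v_\eps,u_\eps)/\inf\psi$, hence is uniformly bounded; by Proposition \ref{equicoercivenessgamma} and weak-$\ast$ compactness it converges (up to subsequence) to a Radon measure $\mu$, and one checks $\supp\mu\subset\Gamma$ because the Modica--Mortola density concentrates on the limiting interface. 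Decomposing $\mu=u\ho\llcorner\Gamma+\mu^s$ with $u\in L^1(\Gamma)$, and verifying $\mu(\R^2)=m$ by passing the constraint $\int u_\eps=m$ through the weak-$\ast$ limit, completes the identification $(\O,v,\mu)\in\cA(m,M)$.

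The main obstacle is the second step: proving that the limit set $\O$ is genuinely a $\bv$ sub-graph rather than an arbitrary finite-perimeter set. The constraint $\supp\mu\subset\Gamma$ is what rules out ``interior bubbles'' or overhangs in the limit, and making this rigorous — reconciling the Hausdorff-type convergence implicit in the sub-graph description with the $L^1_\loc$ convergence of $w_\eps$, and controlling the vertical cuts $\Gamma^c$ as limits of steep interfaces of $w_\eps$ — requires the careful $\delta$-admissible cover machinery of \cite{CriFis} adapted to the phase-field setting, and is where essentially all the work lies.
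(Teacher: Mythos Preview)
Your proposal is considerably more elaborate than the paper's proof, largely because you have misread which statements are hypotheses and which are conclusions. The condition $\supp\mu\subset\Gamma$ is \emph{assumed}, not something you must check; the Remark immediately following the proof stresses that without this assumption one cannot guarantee that the limit measure lives on $\Gamma$. Likewise, the bound $\sup_\eps\int_{Q^+}|E(v_\eps)|^2\,\de\bfx<\infty$ is a separate hypothesis on \emph{all} of $Q^+$, not something extracted from the energy (which only controls $\int(w_\eps+\eta_\eps)W(E(v_\eps)-E_0)$). Consequently the paper applies Korn's inequality directly on the fixed domain $Q^+$ and obtains $v_\eps\wto v$ in $H^1(Q;\R^2)$ in one line; your local-Korn argument on compact subdomains of $\O$ where $w_\eps$ is bounded below is unnecessary.

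You also conflate two different measures denoted $\mu_\eps$: in the hypothesis of the theorem $\mu_\eps\coloneqq u_\eps\cL^2\llcorner Q^+$, whereas in Definition~\ref{topology} the relevant measure is the Modica--Mortola--weighted one $\tfrac{u_\eps}{\sigma}(\eps|\nabla w_\eps|^2+\tfrac1\eps P(w_\eps))\cL^2\llcorner Q^+$. Your fourth paragraph works with the weighted measure and then claims to ``check'' that its limit is supported on $\Gamma$, but the hypothesis already hands you the limit (of the unweighted measure) and its support. The paper's Step~2 simply uses $\supp\mu\subset\Gamma$ and $\partial\Gamma\cap(a,b)=\emptyset$ to pass the constraint $\mu_\eps(Q^+)=m$ through the weak-$\ast$ limit.

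Finally, you identify the sub-graph structure of $\O$ as ``where essentially all the work lies'' and propose the $\delta$-admissible cover machinery from \cite{CriFis}. The paper does not do this: its Step~1 invokes the standard Modica--Mortola compactness from \cite{Mod87} to get $w_\eps\to\chi_\O$ in $L^1(Q^+)$ with $\chi_\O\in\bv(Q^+;\{0,1\})$, and then simply uses $\O$. Your concern that an arbitrary finite-perimeter limit need not be a sub-graph is legitimate, and the paper admittedly does not spell out this passage; but it is not addressed via the cover machinery you propose, and in the paper's logic the hypothesis $\supp\mu\subset\Gamma$ already presupposes that the limiting interface $\Gamma$ is of graph type. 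In short: the paper's proof is three short paragraphs because the strong hypotheses (the global $L^2$ bound on $E(v_\eps)$, the assumed weak-$\ast$ limit with $\supp\mu\subset\Gamma$) do most of the work for you.
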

\begin{remark}
	We notice that the function $\psi$ does not need any further hypotheses for the coerciveness, beside being a strictly positive Borel function.
\end{remark}
The following theorem is similar to Theorem \ref{mainresult1}, but when the constraints \eqref{constraints} on the configurations are in force.
\begin{theorem}\label{mainresult2}
$\cF^{m,M}_\eps\stackrel{\Gamma}{\longrightarrow}\cF^{m,M}$, as $\eps\to0$, with respect to the topology in Definition \ref{topology}.
\end{theorem}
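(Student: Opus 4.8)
The plan is to deduce Theorem \ref{mainresult2} from the unconstrained $\Gamma$-convergence of Theorem \ref{mainresult1} together with the compactness statement of Theorem \ref{compactness}, following the classical scheme for $\Gamma$-convergence under a conserved-mass/volume constraint. For the $\liminf$ inequality there is essentially nothing new: if $(w_\eps,v_\eps,u_\eps)\to(\O,v,\mu)$ in the sense of Definition \ref{topology} with $\sup_\eps\cF^{m,M}_\eps(w_\eps,v_\eps,u_\eps)<\infty$, then in particular $(w_\eps,v_\eps,u_\eps)\in\cA_p(m,M)$, so $\int_{Q^+}w_\eps\,\de\bfx=M$ and $\int_{Q^+}u_\eps\,\de\bfx=m$ for every $\eps$. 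The convergences $w_\eps\to\chi_\O$ in $L^1_\loc$ (upgraded to $L^1(Q^+)$ using the uniform energy bound, which confines the mass at infinity) and $\mu_\eps\wsto\mu$ pass these identities to the limit, giving $\int_a^b h\,\de x=M$ and $\mu(\R^2)=m$, i.e.\ $(\O,v,\mu)\in\cA(m,M)$; then $\cF^{m,M}(\O,v,\mu)=\cF(\O,v,\mu)\leq\liminf_\eps\cF_\eps(w_\eps,v_\eps,u_\eps)=\liminf_\eps\cF^{m,M}_\eps(w_\eps,v_\eps,u_\eps)$ by Theorem \ref{mainresult1}.

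The content is in the $\limsup$ inequality: given $(\O,v,\mu)\in\cA(m,M)$ we must produce a recovery sequence that \emph{exactly} satisfies the two constraints, not merely asymptotically. First I would take the recovery sequence $(w_\eps,v_\eps,u_\eps)$ provided by Theorem \ref{mainresult1} for the unconstrained problem, so that $w_\eps\to\chi_\O$, $v_\eps\to v$, $\mu_\eps\wsto\mu$ and $\limsup_\eps\cF_\eps(w_\eps,v_\eps,u_\eps)\leq\cF(\O,v,\mu)$. In general $\int_{Q^+}w_\eps\,\de\bfx=M_\eps\to M$ and $\int_{Q^+}u_\eps\,\de\bfx=m_\eps\to m$, so one must correct the small errors $M_\eps-M$ and $m_\eps-m$. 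The plan is to perform two independent rigid corrections. For the mass $m$ of the adatom measure, rescale $u_\eps$ by the scalar factor $m/m_\eps\to1$; since $\psi$ is continuous (Borel, but one may first reduce to the convex, hence continuous, representative as in Lemma \ref{lem:char_psi_s0}, or invoke the dominated-convergence-type estimates already used in the proof of Theorem \ref{mainresult1}) the surface term $\int_{Q^+}[\eps|\nabla w_\eps|^2+\tfrac1\eps P(w_\eps)]\psi(u_\eps)\,\de\bfx$ changes by a vanishing amount, and $\mu_\eps$ still $\ast$-weakly converges to $\mu$ because the density is multiplied by $m/m_\eps\to1$. For the volume $M$, I would exploit the geometry of $Q^+=(a,b)\times\{y>0\}$: modify $w_\eps$ far above the profile $h$, e.g.\ on a strip $(a,b)\times(T_\eps,T_\eps+\ell_\eps)$ with $T_\eps\to\infty$ chosen so that $w_\eps$ is already essentially constant ($\approx1$ or $\approx0$ depending on the sign of the needed correction) there, adding or removing a ``slab'' of phase to absorb $M-M_\eps$; taking the slab thickness $\ell_\eps\to0$ keeps the Modica–Mortola term under control (the created interface has length $\leq 2(b-a)$ and is weighted by $\tfrac1\eps P(\cdot)$ times a bounded $\psi$, but placed where $w_\eps$ transitions smoothly the extra energy is $o(1)$ if $\ell_\eps$ and the transition are tuned), while the $L^1_\loc$ convergence $w_\eps\to\chi_\O$ is untouched since the modification escapes to $y=+\infty$. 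One then rechecks that the elastic term is unaffected ($v_\eps$ is left alone, and the modification of $w_\eps$ occurs where $W(E(v_\eps)-E_0)$ is integrable against a small perturbation), so $\limsup_\eps\cF^{m,M}_\eps(\hat w_\eps,v_\eps,\hat u_\eps)\leq\cF(\O,v,\mu)=\cF^{m,M}(\O,v,\mu)$.

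I would organise the write-up as: (1) reduce to finite energy and recall $\cF^{m,M}\geq\cF$, $\cF^{m,M}_\eps\geq\cF_\eps$ pointwise; (2) prove the $\liminf$ inequality by passing the constraints to the limit via the convergences in Definition \ref{topology}; (3) construct the constrained recovery sequence by the two corrections above, estimating the resulting energy increments; (4) conclude via Theorem \ref{firstaxiomgammaconvergence}, noting that $\cA_p$ is a metric space by the Remark after Definition \ref{topology} so the sequential characterisation applies. The main obstacle is step (3), and specifically the volume correction: one must choose the location $T_\eps$, the slab thickness $\ell_\eps$, and the interpolation profile so that simultaneously (a) the Modica–Mortola surface energy of the inserted transition is $o(1)$, (b) $\psi$ evaluated on the (corrected) $u_\eps$ stays bounded there — which is automatic if we keep $u_\eps$ near $0$ on that far strip — and (c) the mass defect $|M-M_\eps|\to0$ is fully absorbed, which forces a lower bound on $\ell_\eps$ in tension with (a); the balance works because $|M-M_\eps|$ can be made to decay as fast as we like by first choosing a good unconstrained recovery sequence (diagonalising if necessary), so $\ell_\eps$ may be sent to $0$ while still covering the defect. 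A cleaner alternative, if the paper's earlier constructions already yield recovery sequences with $M_\eps=M$ by design (because the construction perturbs $h$ on sets of small measure and one can rescale the horizontal variable, or because $w_\eps$ is built as a truncation of an explicit optimal profile whose integral can be tuned), is to only correct the adatom mass $m$; I would check which is the case in the proof of Theorem \ref{mainresult1} and, if the volume is already exact, skip the slab argument entirely.
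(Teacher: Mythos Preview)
Your overall scheme---liminf from the unconstrained result plus a post-hoc correction of an unconstrained recovery sequence---is \emph{not} what the paper does, and the correction you propose for the volume constraint does not work as written.

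\textbf{The slab argument fails.} Far above $\Gamma$ the phase field $w_\eps$ equals $0$, so inserting a slab on $(a,b)\times(T_\eps,T_\eps+\ell_\eps)$ where $\hat w_\eps=1$ forces two new transitions from $0$ to $1$ and back. The Modica--Mortola cost of each transition is bounded \emph{below} by $\sigma(b-a)$ (weighted by $\psi(0)>0$), independently of the slab thickness $\ell_\eps$; making $\ell_\eps\to0$ only shrinks the enclosed mass, not the interfacial energy. Your claim that ``the extra energy is $o(1)$ if $\ell_\eps$ and the transition are tuned'' cannot be realised: any full $0\to1$ transition costs at least $\psi(0)(b-a)$ in the limit. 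A partial bump $w_\eps=\delta_\eps$ on the slab would instead contribute $\frac{1}{\eps}P(\delta_\eps)\ell_\eps(b-a)$, which for the standard construction scales like $|M-M_\eps|/\eps$; since in the optimal-profile construction $|M-M_\eps|\sim\eps\,\ho(\Gamma)$, this does not vanish either, and your diagonalisation idea (using a recovery sequence at a finer scale $\eps'\ll\eps$) blows up the gradient term $\eps|\nabla w_{\eps'}|^2\sim\eps/\eps'$.

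\textbf{What the paper does instead.} The paper never corrects an unconstrained recovery sequence. It proves the \emph{constrained} limsup directly (Theorem~\ref{limsup}, reduced via Theorems~\ref{limsuptheorem} and~\ref{wrigglingthm} to Theorem~\ref{realtheoremlimsup}), building the constraints into the construction from the start. The volume constraint is enforced by a \emph{vertical dilation}: after setting $z_\eps(\bfx)=\gamma_\eps(d_\O(\bfx)/\eps)$, one puts $w_\eps(x,y)=z_\eps(x,\alpha_\eps y)$ with $\alpha_\eps=M/\int_{Q^+}z_\eps\to1$, so that $\int_{Q^+}w_\eps=M$ exactly while all energy terms change only by the factor $\alpha_\eps\to1$. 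The adatom constraint is enforced by defining $u_\eps$ on each rectangle $R^j$ with a normalising factor $\ho(\Gamma\cap R^j)/p_\eps^j$. This is precisely your ``cleaner alternative'' at the end, which you present as a fallback; in the paper it is the actual argument, and it sidesteps the slab issue entirely.

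\textbf{A gap in your liminf.} You pass the adatom mass constraint to the limit by writing ``$\int_{Q^+}u_\eps=m$ and $\mu_\eps\wsto\mu$ give $\mu(\R^2)=m$''. But the measure $\mu_\eps$ in Definition~\ref{topology} is $\frac{u_\eps}{\sigma}\bigl(\eps|\nabla w_\eps|^2+\frac{1}{\eps}P(w_\eps)\bigr)\cL^2\llcorner Q^+$, \emph{not} $u_\eps\cL^2\llcorner Q^+$; in general $\mu_\eps(\R^2)\neq\int_{Q^+}u_\eps$, so the identity $\int_{Q^+}u_\eps=m$ does not pass to $\mu(\R^2)=m$ via weak-$\ast$ convergence of $\mu_\eps$. (The paper does not address this point explicitly either, simply asserting that the unconstrained liminf together with the constrained limsup suffices; whatever argument fills this gap, your sketch does not supply it.)
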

The proofs of Theorem \ref{mainresult1} and \ref{mainresult2} are a consequence of the following two steps.
\begin{enumerate}
	\item[Step 1] \textit{Liminf inequality.} In Theorem \ref{liminftheorem} we prove that for every $(\O,v,\mu)\in \cA$ and for every $(w_\eps,v_\eps,u_\eps)_\eps\subset \cA_p$, such that $(w_\eps,v_\eps,u_\eps)\to(\O,v,\mu)$ as $\eps\to0$, we have
	\begin{align*}
		\cF(\O,v,\mu)\leq\liminf_{\eps\to0} \cF_\eps(w_\eps,v_\eps,u_\eps).
	\end{align*}
	\item[Step 2] \textit{Limsup inequality for the constrained problem.} In Theorem \ref{limsup} we prove that for every $(\O,v,\mu)\in\cA(m,M)$, there is a sequence $(w_\eps,v_\eps,u_\eps)_\eps\subset \cA_p(m,M)$ such that $(w_\eps,v_\eps,u_\eps)\to (\O,v,\mu)$ as $\eps\to0$ and
	\begin{align*}
		\limsup_{\eps\to0} \cF(w_\eps,v_\eps,u_\eps)\leq \cF (\O,v,\mu).
		\end{align*}
\end{enumerate}
The following theorem is a consequence of Theorems \ref{convergenzaminimi},  \ref{compactness} and \ref{mainresult2} and states the convergence of the minimisation sequences for $\cF_\eps$. We remark that the family $(\cF_\eps)_\eps$ is equi-coercive and lower semi-continuous. 
\begin{theorem}
	We have that
	\begin{align*}
	\lim_{\eps\to0}\min_{\cA_p(m,M)}\cF_\eps=\min_{\cA(m,M)} \cF.
	\end{align*}
\end{theorem}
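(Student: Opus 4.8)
The plan is to deduce the statement from the fundamental theorem of $\Gamma$-convergence, Theorem~\ref{convergenzaminimi}, applied to the family $(\cF^{m,M}_\eps)_\eps$ on the space $\cA_p$, which is metrisable thanks to Proposition~\ref{equicoercivenessgamma}. The $\Gamma$-convergence $\cF^{m,M}_\eps\stackrel{\Gamma}{\to}\cF^{m,M}$ is exactly Theorem~\ref{mainresult2}, so the only ingredients left to supply are: (a) that each $\cF^{m,M}_\eps$ attains its minimum on $\cA_p(m,M)$ (so that writing $\min_{\cA_p(m,M)}\cF_\eps$ is legitimate), and (b) that the family $(\cF^{m,M}_\eps)_\eps$ is \emph{equi-coercive}, which is what upgrades the $\Gamma$-convergence to convergence of the minimum values and of minimisers. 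Once these are in place, the chain of identities $\lim_{\eps\to0}\min_{\cA_p(m,M)}\cF_\eps=\lim_{\eps\to0}\inf_{\cA_p}\cF^{m,M}_\eps=\min_{\cA}\cF^{m,M}=\min_{\cA(m,M)}\cF$ closes the argument, the last equality holding because $\cF^{m,M}\equiv\cF$ on $\cA(m,M)$ by Definition~\ref{Ffunctional}.

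For (a) I would argue by the direct method, separately for each fixed $\eps>0$. The class $\cA_p(m,M)$ is non-empty; along a minimising sequence for $\cF^{m,M}_\eps$, the energy bound and the constraints $\int_{Q^+}w\,\de\bfx=M$, $\int_{Q^+}u\,\de\bfx=m$ give enough compactness — via reflexivity of $H^1$, Rellich compactness, weak-$\ast$ compactness of bounded measures, and the coercivity of $W$ together with $\eta_\eps>0$ (which for \emph{fixed} $\eps$ makes the elastic term genuinely elliptic) — to extract a limit still lying in $\cA_p(m,M)$; the lower semicontinuity of $\cF^{m,M}_\eps$ already remarked in the paper then shows the limit is a minimiser. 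The attainment of $\min_{\cA(m,M)}\cF$ is in any case part of the conclusion of Theorem~\ref{convergenzaminimi}, $\cF^{m,M}$ being the $\Gamma$-limit of an equi-coercive, lower semicontinuous family.

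For (b) I would show that every sequence $(w_\eps,v_\eps,u_\eps)_\eps\subset\cA_p(m,M)$ with $\sup_\eps\cF^{m,M}_\eps(w_\eps,v_\eps,u_\eps)<\infty$ satisfies the hypotheses of the compactness Theorem~\ref{compactness}, and therefore admits a subsequence converging, in the sense of Definition~\ref{topology}, to some element of $\cA(m,M)$; by metrisability of $\cA_p$ this is exactly equi-coercivity. The mass constraint controls the total mass of the measures entering the topology, yielding weak-$\ast$ precompactness; the Modica--Mortola structure of the diffuse-perimeter term $\frac1\sigma\int_{Q^+}[\eps|\nabla w_\eps|^2+\frac1\eps P(w_\eps)]\psi(u_\eps)\,\de\bfx$ forces the associated energy measure to concentrate, so the relevant weak-$\ast$ limit is carried by $\Gamma$; and the elastic term $\int_{Q^+}(w_\eps+\eta_\eps)W(E(v_\eps)-E_0(y))\,\de\bfx$ — this is precisely where the $o(\eps)$ shift $\eta_\eps$ enters, as the paper's Remark signals — combined with the boundary behaviour of $v_\eps$ and Korn's inequality, produces the uniform bound $\sup_\eps\int_{Q^+}|E(v_\eps)|^2\,\de\bfx<\infty$ demanded by Theorem~\ref{compactness}. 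Applying the liminf inequality (Theorem~\ref{liminftheorem}) to the limit of a minimising sequence then gives $\min_{\cA(m,M)}\cF\le\liminf_\eps\min_{\cA_p(m,M)}\cF_\eps$, while a recovery sequence gives the reverse $\limsup$.

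The main obstacle is the equi-coercivity in (b), and within it the uniform $L^2$-control of the symmetrised strains $E(v_\eps)$: since the elastic density is weighted only by $w_\eps+\eta_\eps$, which degenerates as $\eps\to0$ wherever $w_\eps\to0$, this bound is delicate and relies on the interplay between the $o(\eps)$ regularisation, the volume constraint $\int_{Q^+}w_\eps\,\de\bfx=M$ and Korn-type estimates; identifying the support of the limiting adatom measure is equally non-trivial and reuses the concentration analysis carried out for the liminf inequality. Everything else — the direct-method arguments for attainment and the bookkeeping matching the various $\min$'s and $\inf$'s across $\cA_p$, $\cA_p(m,M)$, $\cA$ and $\cA(m,M)$ — is routine.
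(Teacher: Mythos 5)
Your overall strategy coincides with the paper's: the theorem is derived from Theorem~\ref{convergenzaminimi} (fundamental theorem of $\Gamma$-convergence), Theorem~\ref{mainresult2} (the constrained $\Gamma$-convergence $\cF^{m,M}_\eps\gto\cF^{m,M}$), and Theorem~\ref{compactness} (compactness), together with the asserted equi-coercivity and lower semicontinuity of the family $(\cF^{m,M}_\eps)_\eps$. You have identified the correct ingredients and the correct logical order, and your bookkeeping between $\cF_\eps$, $\cF^{m,M}_\eps$, $\cA_p(m,M)$ and $\cA(m,M)$ is exactly right.

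However, there is a genuine gap in step~(b), and it is worth naming precisely because the paper itself flags it. You assert that any sequence in $\cA_p(m,M)$ with bounded energy satisfies the hypotheses of Theorem~\ref{compactness}, and hence that Theorem~\ref{compactness} delivers equi-coercivity. But Theorem~\ref{compactness} does \emph{not} take bounded energy as its only hypothesis: it additionally assumes the uniform strain bound $\sup_\eps\int_{Q^+}|E(v_\eps)|^2\,\de\bfx<\infty$ and, crucially, that the weak-$\ast$ limit of $u_\eps\cL^2\llcorner Q^+$ is supported on $\Gamma$. The remark immediately following Theorem~\ref{compactness} in the paper explains that the support condition is \emph{essential} and does \emph{not} follow from the energy bound, precisely because the phase-field variable $w_\eps$ and the density $u_\eps$ are independent: a bounded-energy sequence can let the adatom mass concentrate far away from the diffuse interface (cf.\ Figure~\ref{figcompactness}). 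Similarly, the elastic term controls only $(w_\eps+\eta_\eps)^{1/2}E(v_\eps)$ in $L^2$, and since the weight degenerates as $\eps\to0$ you cannot pass from that to a uniform $L^2$ bound on $E(v_\eps)$ without additional structure. So the family $(\cF^{m,M}_\eps)_\eps$ is not equi-coercive on all of $\cA_p(m,M)$ in the usual sense; the hypotheses of Theorem~\ref{compactness} are extra constraints, and one must argue separately (e.g.\ using properties of near-minimisers, or restricting the admissible class) that they hold along the minimising sequences in question. You do gesture at this tension in your final paragraph, but your sketch treats it as a technical estimate that will work out, whereas the paper's own remark shows that bounded energy alone cannot produce the support condition. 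This is the one point where your argument, as written, would not go through.
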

We would like to briefly give an idea of the proof of Theorems \ref{liminftheorem} and \ref{limsup}. \\
The main difficulty of the liminf inequality comes from the surface term. Indeed, for a given configuration $(\O, v,\mu)\in\cA$, the set $\Gamma$ might present a dense cut set. To get around it, we define 
\begin{align*}
	C_\xi\coloneqq \{\textbf{x}\in\Gamma^c:h^-(x)-y<\xi\}.
\end{align*} 
It is possible to prove that $\Gamma^c\setminus C_\xi$ is a finite number of vertical segments. Since we can now separate those cuts and include each of them in a suitable rectangle $R$, we suppose that we only have one cut point $x^c$ and we repeat the argument for each other one afterward.  The energy around $x^c$ is given by
\begin{align*}
	\liminf_{\eps\to0}&\frac{1}{\sigma}\int_{Q^+\cap R} \Big[\eps|\nabla w_\eps|^2+\frac{1}{\eps}P(w_\eps)\Big]\psi(u_\eps)\ \de\textbf{x}.
\end{align*}
From here we can separate the energy on the left and on the right of each single cut, namely we can split $R=R^\ell\cup R^\rr$ in such a way that $R^\ell\cap R^\rr=\{x^c\}\times [0,h^-(x^c)]$. Therefore, when we pass to the liminf, we get
\begin{align*}
	\nonumber	\liminf_{\eps\to0}\frac{1}{\sigma}\int_{Q^+\cap R} &\Big[\eps|\nabla w_\eps|^2+\frac{1}{\eps}P(w_\eps)\Big]\psi(u_\eps)\ \de\textbf{x}\\[5pt]
	\nonumber\geq\liminf_{\eps\to0}&\frac{1}{\sigma}\int_{Q^+\cap R^\ell} \Big[\eps|\nabla w_\eps|^2+\frac{1}{\eps}P(w_\eps)\Big]\psi(u_\eps)\ \de\textbf{x}\\[5pt]
	&\hspace{0.5cm}+\liminf_{\eps\to0}\frac{1}{\sigma}\int_{Q^+\cap R^\rr} \Big[\eps|\nabla w_\eps|^2+\frac{1}{\eps}P(w_\eps)\Big]\psi(u_\eps)\ \de\textbf{x}.
\end{align*}
First, we prove that
\begin{align*}
	\frac{u_\eps}{\sigma}\Big( \varepsilon |\nabla w_\eps|^2+\frac{1}{\varepsilon} P(w_\eps)  \Big)\mathcal{L}^2\llcorner R^\ell&\wsto f\ho\llcorner  (\Gamma^c \cap R)+(\mu^s)^\ell,\\[5pt]
	\frac{u_\eps}{\sigma}\Big( \varepsilon |\nabla w_\eps|^2+\frac{1}{\varepsilon} P(w_\eps)  \Big)\mathcal{L}^2\llcorner R^\rr&\wsto g\ho\llcorner (\Gamma^c \cap R)+(\mu^s)^\rr,
\end{align*}
for some $f,g\in L^1(\Gamma^c\setminus C_\xi)$, as $\eps\to0$ such that 
\begin{align}\label{f+g}
	f+g=u_{|\Gamma^c\setminus C_\xi}\quad\text{and}\quad (\mu^s)^\ell+(\mu^s)^\rr=\mu^s.
\end{align}
Therefore, if we take into account the rectangle on the left of the cut, we can prove that
\begin{align*}
	\liminf_{\eps\to0}\frac{1}{\sigma}\int_{Q^+\cap R^\ell} \Big[\eps|\nabla w_\eps|^2&+\frac{1}{\eps}P(w_\eps)\Big]\psi(u_\eps)\ \de\textbf{x}\\[5pt]&\geq \int_{\partial^\ast R^\ell}\widetilde{\psi}(f)\ \de\ho+\theta (\mu^s)^\ell(\R^2).
\end{align*}
We can conclude by summing up the contribution given by the term integrated on $R^r$ and by considering the definition of $\psi^c$, given the fact that
\begin{align*}
	\widetilde{\psi}(f)+\widetilde{\psi}(g)\geq \psi^c(u),
\end{align*}
provided that $f+g=u$.\\

The limsup inequality is based on a diagonalisation argument. Namely, we can approximate a given configuration $(\O,v,\mu)\in\cA(m,M)$, with a sequence $(\O_\eps,v_\eps,u_\eps\ho\llcorner\Gamma_\eps)$ such that $\O_\eps$ is the sub-graph of a $\cC^\infty$ function and $u_\eps\in L^1(\Gamma_\eps)$ is piece-wise constant in a suitable sense (see Definition \ref{gridconstant} and Theorems \ref{limsuptheorem} and \ref{wrigglingthm}).\\
One of the key ingredients of the proof of the Theorem \ref{wrigglingthm} is the so called \textit{wriggling process}, first introduced in \cite{CarCriDie}, largely used in \cite{CarCri} and later refined in \cite{CriFis}. The idea relies on the fact that $\psi$ and $\widetilde{\psi}$ agree on $[0,s_0)$, for some $s_0\in(0,+\infty]$. In particular, if $s_0<+\infty$ the function $\widetilde{\psi}$ is linear on $(s_0,+\infty)$ (see Lemma \ref{lem:char_psi_s0}). Thus, if $u\in L^1(\Gamma)$ is the density we would like to approximate, in case $u\leq s_0$, we define the approximant profile $h_\eps$ as $h$ and the approximant density $u_\eps\in L^1(\Gamma_\eps)$ as $u$. In case $u>s_0$, the surface energy in the recovery sequence can be considered constant on suitable rectangles (see Definition \ref{gridconstant} and \cite[Proposition 7.6]{CriFis}). If $R$ is one of those rectangles, we have that the surface energy is given by $\widetilde{\psi}(u)\ho(\Gamma\cap R)$. The idea is to write
\[
\widetilde{\psi}(u)\ho(\Gamma\cap R)
= \widetilde{\psi}(r s_0)\ho(\Gamma\cap R)
= r \widetilde{\psi}(s_0)\ho(\Gamma\cap R)
= \psi(s_0) \left[ r\ho(\Gamma\cap R)\right],
\]
for some $r>1$, where in the last step we used the fact that $\psi(s_0)=\widetilde{\psi}(s_0)$.\\
In Theorem \ref{realtheoremlimsup}, we use a similar strategy to the one used in \cite[Theorem 3.1]{BonCha02}, in which the phase-field approximating sequence $(w_\eps)_\eps$ is built by making use of the \textit{almost} optimal profile problem as explained in \cite[Proposition 2]{Mod87}, which is the solution of the following differential equation,
\begin{equation}
	\begin{cases}
		\eps^2	\abs{\gamma'_\eps(t)}^2=P(\gamma_\eps(t))+\sqrt{\eps}\quad t\in\R,\\[5pt]
		\gamma_\eps(0)=0,\\[5pt]
		\gamma_\eps(1)=1.
	\end{cases}
	\label{optimalprofile1}
\end{equation}
We can define the phase-field varaible as
\begin{align*}
	w_\eps(\bfx)\coloneqq \gamma_\eps\Big(\frac{d_\O(\bfx)}{\eps}\Big),
\end{align*}
where $\gamma_\eps$ is the solution of \eqref{optimalprofile1} and 
\begin{align*}
	d_\O(\bfx)\coloneqq \dist{\bfx}{\O}-\dist{\bfx}{\R^2\setminus \O},
\end{align*}
is the signed distance from $\O$. Now, once we make sure that all the constraints are satisfied, we follow a path inspired to the one contained in \cite{BonCha02}. 
\section{Compactness}
In this section we give the proof of Theorem \ref{compactness}. For the reader's convenience, we report the statement.
\begin{theorem}
	Let $(w_\eps,v_\eps,u_\eps)_\eps\subset \cA_p(m,M)$ be a sequence such that
	\begin{align}
		&\label{compactnessbounds1}\sup_{\eps>0}\ 	\cF_\eps(w_\eps,v_\eps,u_\eps)<\infty,\\[5pt]	
		&\label{compactnessbounds2}\sup_{\eps>0}\int_{Q^+} \abs{E(v_\eps)}^2\ \de\bfx<\infty,\\[5pt]
		&\nonumber\mu_\eps\coloneqq u_\eps\cL^2\llcorner Q^+\stackrel{\ast}{\wto} \mu,
	\end{align}
as $\eps\to0$, with
	\begin{align}\label{compactnessbounds4}
		\supp \mu\subset \Gamma.
	\end{align} 
	Then, there exists $( \O,v,\mu)\in\cA(m,M)$ such that 
	\begin{align*}
		(w_\eps,v_\eps,u_\eps)\to(\O,v,\mu),
	\end{align*}
	as $\eps\to0$.
\end{theorem}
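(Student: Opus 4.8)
The plan is to argue along a single subsequence, extracting the three components $\O$, $v$, $\mu$ of the limit one at a time, and then to check that the resulting triple lies in $\cA(m,M)$ and that $(w_\eps,v_\eps,u_\eps)$ converges to it in the sense of Definition~\ref{topology}.

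I would first treat the phase field. Since $c_0:=\inf_{s\geq0}\psi(s)>0$, the bound \eqref{compactnessbounds1} gives the Modica--Mortola estimate $\int_{Q^+}\big(\eps|\nabla w_\eps|^2+\tfrac1\eps P(w_\eps)\big)\de\bfx\leq \sigma c_0^{-1}\sup_\eps\cF_\eps(w_\eps,v_\eps,u_\eps)$. From $\eps|\nabla w_\eps|^2+\tfrac1\eps P(w_\eps)\geq 2\abs{\nabla(\Phi\circ w_\eps)}$ with $\Phi(t):=\int_0^t\sqrt{P(s)}\,\de s$, the functions $\Phi\circ w_\eps$ are bounded in $BV_\loc(Q^+)\cap L^\infty$, hence precompact in $L^1_\loc(Q^+)$; as $\Phi$ is a homeomorphism of $[0,1]$ onto $[0,\tfrac\sigma2]$, a subsequence of $w_\eps$ converges in $L^1_\loc(Q^+)$ and a.e.\ to some $w$, and $\int_{Q^+}P(w)\,\de\bfx\leq\liminf_\eps\int_{Q^+}P(w_\eps)\,\de\bfx=0$ forces $w\in\{0,1\}$ a.e., i.e.\ $w=\chi_{\O^+}$ for a set $\O^+\subset Q^+$ of locally finite perimeter. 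Next I would show that $\O^+$ coincides, up to a null set, with the subgraph $\{(x,y):0<y<h(x)\}$ of some $h\in\bv(a,b)$, $h\geq0$: with $h(x):=\cL^1(\{y>0:(x,y)\in\O^+\})$, Fubini gives $\int_a^b h\,\de x=\abs{\O^+}=\lim_\eps\int_{Q^+}w_\eps\,\de\bfx=M$ (using $0\leq w_\eps\leq1$ and that the energy prevents mass from escaping vertically), the finite perimeter of $\O^+$ controls $\abs{Dh}(a,b)$, and the downward-closedness in $y$ follows because the transition layers of $w_\eps$ concentrate on the graph $\Gamma$ of $h$, on which, by hypothesis~\eqref{compactnessbounds4}, the limiting adatom measure is supported. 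Setting $\O:=\O^+\cup(\{y\leq0\}\cap Q)$, one gets $w_\eps\to\chi_\O$ in $L^1_\loc(Q^+)$ and that $\Gamma=\partial\O\cap(a,b)\times\R$ is the extended graph of $h$.

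For the displacement, \eqref{compactnessbounds2} gives a uniform $L^2(Q^+)$ bound on $E(v_\eps)$; together with Korn's inequality and the usual normalisation of infinitesimal rigid motions this yields a uniform $H^1_\loc$ bound, so a subsequence of $v_\eps$ converges strongly in $L^2_\loc(Q)$ and weakly in $H^1_\loc(Q)$ to a limit $v$ admissible for $\cG$, with $E(v_\eps)\wto E(v)$ in $L^2(Q^+)$. For the measure, I decompose the limiting measure $\mu$ (which exists by hypothesis and satisfies $\supp\mu\subset\Gamma$) as $\mu=u\,\ho\llcorner\Gamma+\mu^s$ with $u\in L^1(\Gamma)$; this is meaningful precisely because $\mu$ is concentrated on $\Gamma$, and makes $\mu$ an admissible adatom measure. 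Passing to the limit in $\int_{Q^+}u_\eps\,\de\bfx=m$ gives $\mu(\R^2)=m$, and together with $\int_a^b h\,\de x=M$ this shows $(\O,v,\mu)\in\cA(m,M)$; by construction $(w_\eps,v_\eps,u_\eps)\to(\O,v,\mu)$ in the sense of Definition~\ref{topology}.

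I expect the main difficulty to be the structural step identifying $\O^+$ as a genuine subgraph — Modica--Mortola compactness alone only produces a set of finite perimeter — together with the accompanying fact that $u_\eps$ really concentrates on $\Gamma$ at the right scale; both rely on the hypothesis $\supp\mu\subset\Gamma$ and on the $L^2$--bound on $E(v_\eps)$, which together with the finite-energy condition prevent surface and elastic energy from being lost at infinity. Everything else (the Modica--Mortola estimate, Korn's inequality, and the passage to the limit in the two mass constraints) is routine once the subsequence has been fixed.
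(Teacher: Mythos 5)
Your proof follows essentially the same route as the paper's: Modica--Mortola compactness for the phase field, Korn's inequality for the displacement, and passage to the limit in the two mass constraints using the support hypothesis on $\mu$. The structural step you flag as delicate --- identifying the Modica--Mortola limit as a genuine subgraph of some $h\in\bv(a,b)$ --- is in fact not addressed explicitly in the paper's proof either, which simply writes $w_\eps\to\chi_\O$.
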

\begin{proof}
	\emph{Step 1.} The compactness of the phase-field variable $w_\eps$ follows a standard argument, which makes use of the first bound in \eqref{compactnessbounds1}, and can be found in \cite{Mod87}. Therefore, we have the existence of $w\in \bv(Q^+;\{0,1\})$ such that, up to a subsequence $w_\eps\to \chi_\O$ in $L^1(Q^+)$. Moreover
	\begin{align*}
		\int_{Q^+} w_\eps\ \de\bfx\to \int_{Q^+}\chi_\O(\bfx)\ \de\bfx=\abs{\O}=M,
	\end{align*} 
	thus the mass constraint is preserved.\\
	\emph{Step 2.} From \eqref{compactnessbounds4} we have that $\supp\mu\subset\Gamma$. From the fact that $\partial\Gamma \cap (a,b)=\emptyset$, we have, by standard properties of the $\ast$-weak convergence that
	\begin{align}\label{massconstrmu}
		\mu(Q^+)=\lim_{\eps\to0} \mu_\eps(Q^+)=m.
	\end{align}
	\emph{Step 3.} By \eqref{compactnessbounds2}, there exists a function $\overline{E}\in L^2(Q;\R^2)$ such that
	\begin{align*}
		E(v_\eps)\wto \overline{E}
	\end{align*}
	in $L^2(Q;\R^2)$ as $\eps\to0$. By Korn's inequality, there exists $C>0$ such that
	\begin{align*}
		\int_{Q^+}\abs{\nabla v_\eps(\bfx)}^2\ \de\bfx \leq C \int_{Q^+} \abs{E(v_\eps)}^2\ \de\bfx <\infty.
	\end{align*}
	From that, again by compactness, we have the existence of a function $v\in H^1(Q;\R^2)$ such that $v_\eps\wto v$ as $\eps\to0$. By uniqueness of the weak limit, $E(v_\eps) \wto \overline{E}=E(v)$.\\
	By putting together the three above steps, we conclude proof of the Theorem.
\end{proof}
\begin{remark}
	In Theorem \ref{compactness}, hypothesis \eqref{compactnessbounds4} is essential. Indeed, if we drop it we cannot ensure the validity of \eqref{massconstrmu} and neither the fact that $(\mu_\eps)_\eps$ is converging to a Radon measure that is supported on $\Gamma$. This behaviour is due to the fact that the phase-field variable and the density one are completely independent (see Figure \ref{figcompactness}).
\end{remark}
\begin{figure}
	\begin{center}
		\includegraphics[scale=0.35]{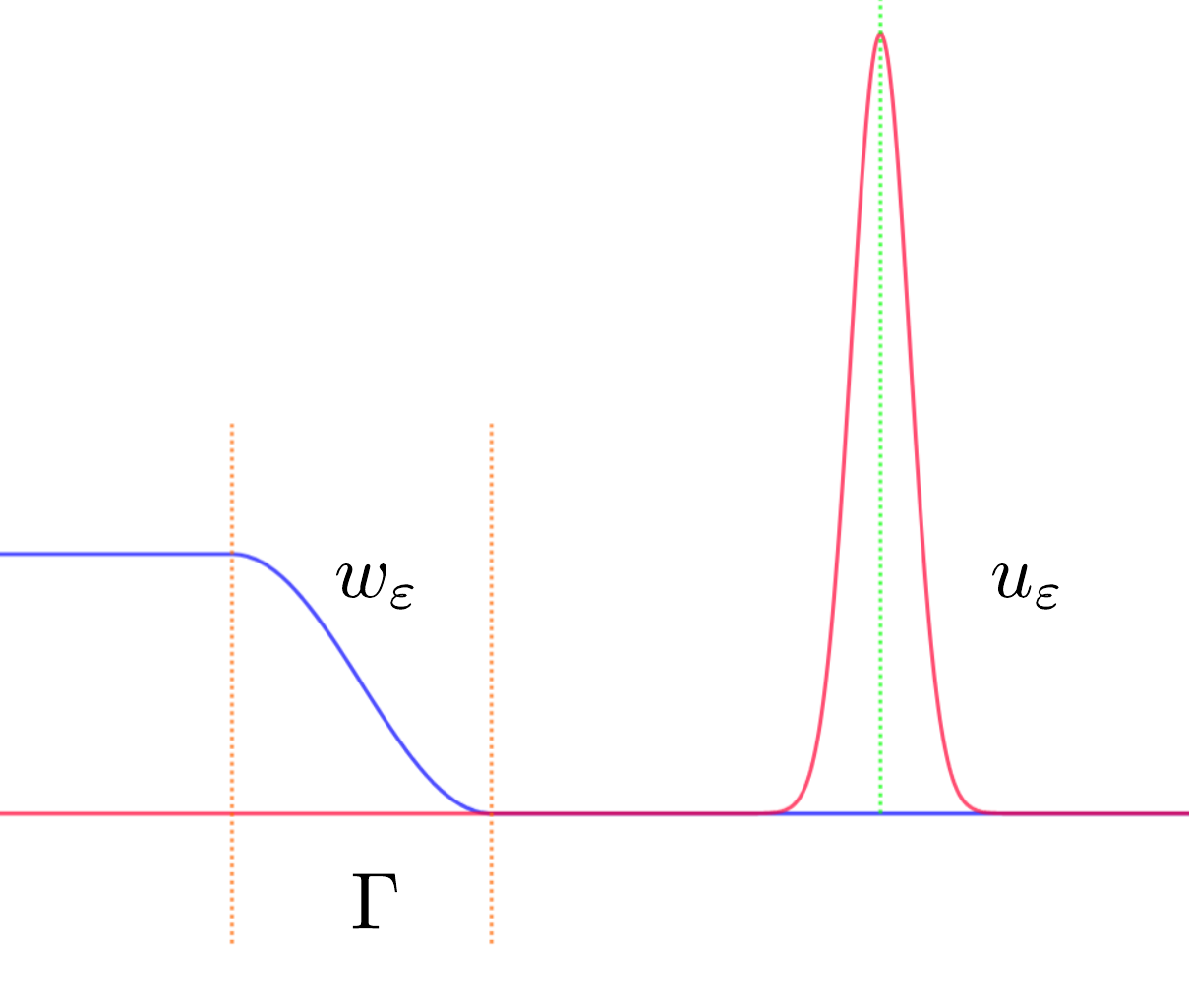}
		\caption{The phase-field variable converges to $\chi_\O$, whereas the $u_\eps$, being independent from the other variables, can concentrate in an area away from $\Gamma$.}
		\label{figcompactness}
	\end{center}
\end{figure}
\section{Liminf inequality}
The liminf inequality relies on the strategy used in \cite[Theorem 6.1]{CriFis} and the following theorem, which is a corollary of the main result of \cite{CarCri}. In this paper, the authors  work with a surface energy in a bounded domain in $\R^n$, with an adatom density at the interface. We report such a result, with our notation. The proof is presented in \cite[Theorem 3.10]{CarCri}.
\begin{theorem}\label{cctheorem}
	Let $\Sigma\subset\R^2$ be an open set and consider the functional
	\begin{align*}
		\cE_\eps(w,u)\coloneqq \frac{1}{\sigma}\int_\Sigma \Big[\eps\abs{\nabla w}^2+\frac{1}{\eps}P(w)\Big]\psi(u)\ \de\bfx,
	\end{align*}
	where $P$ as in Definition \ref{potential} and, with an abuse of notation, $(w,u)\in \cA_p$. Then, $\cE_\eps\stackrel{\Gamma}{\to}\cE$, as $\eps\to0$, where
	\begin{align}\label{GammalimitCC}
		\cE(A,\mu)\coloneqq \int_{\partial^\ast A\cap \Sigma } \widetilde{\psi} (u)\dho+\theta\mu^s(\R^2),
	\end{align}
for $(A,\mu)\in\cA$, with respect the topology in Definition \ref{topology}. Here $\theta$ is as in Definition \ref{theta} and $\mu^s$ is the singular part of $\mu$ with respect to $\ho$.
\end{theorem}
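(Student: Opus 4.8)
The plan is to derive Theorem~\ref{cctheorem} from the $\Gamma$-convergence result of Caroccia and Cristoferi, \cite[Theorem 3.10]{CarCri}. After matching notation, $\cE_\eps$ is exactly their functional $CC_\eps$ with the bounded domain there replaced by $\Sigma$, the limit $\cE$ is their $CC$ specialised to $n=2$, and the convex sub-additive envelope $\widetilde\psi$ and the recession coefficient $\theta$ of Definitions~\ref{psitilde} and~\ref{theta} are the very objects used in \cite{CarCri}; one checks directly from the definitions that the convergence employed there is precisely ``$w_\eps\to\chi_A$ in $L^1_{\loc}$ together with $\mu_\eps\wsto\mu$'' of Definition~\ref{topology}. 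The one point not literally contained in \cite{CarCri} is the passage from a \emph{bounded} domain, to which that theorem applies verbatim, to an arbitrary open $\Sigma\subset\R^2$; in every application made in the present paper $\Sigma$ is in fact bounded (of the form $Q^+\cap R$ with $R$ a rectangle), so this is only a matter of stating the result cleanly.

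For the liminf inequality I would localise. Given $(w_\eps,u_\eps)\to(A,\mu)$ as in Definition~\ref{topology}, the integrand of $\cE_\eps$ is nonnegative (as $P\geq 0$ and $\psi>0$), so $\cE_\eps(w_\eps,u_\eps)\geq\cE_\eps^{U}(w_\eps,u_\eps)$ for every bounded open $U$ with $\overline U\subset\Sigma$, where $\cE_\eps^{U}$ is the same functional on $U$. I would fix an exhaustion $(U_k)_k$ of $\Sigma$ by such sets whose boundaries are null for both $\ho\llcorner\partial^\ast A$ and $\mu^s$ — possible because, along a one-parameter family of candidate interfaces, only countably many can carry positive mass — so that on each $U_k$ the localised measures still converge $\ast$-weakly and \cite[Theorem 3.10]{CarCri} applies, giving $\liminf_{\eps\to0}\cE_\eps(w_\eps,u_\eps)\geq\int_{\partial^\ast A\cap U_k}\widetilde\psi(u)\dho+\theta\,\mu^s(U_k)$. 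Letting $k\to\infty$, monotone convergence of $\int_{\partial^\ast A\cap U_k}\widetilde\psi(u)\dho$ and continuity from below of $\mu^s$ along $U_k\uparrow\Sigma$ (together with $\supp\mu\subset\overline\Sigma$, so that $\mu^s(\Sigma)=\mu^s(\R^2)$) then give $\liminf_{\eps\to0}\cE_\eps(w_\eps,u_\eps)\geq\cE(A,\mu)$.

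For the limsup inequality I may assume $\cE(A,\mu)<\infty$; since $\widetilde\psi\geq\inf_{s\geq0}\psi(s)>0$ (immediate from Definition~\ref{psitilde}), this forces $\ho(\partial^\ast A\cap\Sigma)<\infty$, while $\mu$ is finite. If $\Sigma$ is bounded the recovery sequence is the one built in \cite[Theorem 3.10]{CarCri}, which is local: an optimal-profile transition of width $O(\eps)$ across $\partial^\ast A$ carrying the absolutely continuous density $u$, plus a diffuse term accounting for $\mu^s$. For general $\Sigma$ I would run this construction on each $U_k$ of the exhaustion, extend it outside $U_k$ by the (suitably regularised) configuration $(\chi_A,0)$ — the two agree away from the $O(\eps)$ layer — and diagonalise, with $k=k(\eps)\to\infty$ slowly, to obtain $(w_\eps,u_\eps)\to(A,\mu)$ in $\Sigma$ with $\limsup_{\eps\to0}\cE_\eps(w_\eps,u_\eps)\leq\cE(A,\mu)$. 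I do not expect a genuine obstacle, the substance of the statement being \cite[Theorem 3.10]{CarCri}; the only step requiring care is the choice of the exhausting sets $U_k$, so that $\ho\llcorner\partial^\ast A$ and $\mu^s$ do not charge $\partial U_k$ and the localised convergences $\mu_\eps\llcorner U_k\wsto\mu\llcorner U_k$ are legitimate.
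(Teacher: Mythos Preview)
Your proposal is correct and follows exactly the paper's approach: the paper does not give an independent proof but simply states that ``The proof is presented in \cite[Theorem 3.10]{CarCri}.'' Your additional care about passing from a bounded domain to an arbitrary open $\Sigma$ via an exhaustion is reasonable but, as you yourself note, unnecessary for the applications in the paper, where $\Sigma$ is always bounded.
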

\begin{remark}\label{liminfsolution}
	We notice that if we consider an open subset $B$ compactly contained in $\Sigma$ and denote by $\cC_b(B)$ the set of continuous and bounded functions on $B$, then, if we consider the convergence 
	\begin{align*}
		\int_B \varphi\ \de\mu_\eps\to\int_B \varphi\ \de\mu,
	\end{align*} 
	for every $\varphi\in\cC_b(B)$, we have that the set of integration in the $\Gamma$-limit in \eqref{GammalimitCC} is $\partial^\ast A\cap\overline{B}$.
\end{remark}
The main theorem of this section, namely the liminf inequality, is the following.
\begin{theorem}\label{liminftheorem}
	Let $(\O,v,\mu)\in \cA$. Then, for every $(w_\eps,v_\eps,u_\eps)_\eps\subset \cA_p$, such that $(w_\eps,v_\eps,u_\eps)\to(\O,v,\mu)$ as $\eps\to0$, we have
	\begin{align*}
		\cF(\O,v,\mu)\leq\liminf_{\eps\to0} \cF_\eps(w_\eps,v_\eps,u_\eps).
	\end{align*}
\end{theorem}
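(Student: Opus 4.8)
The plan is to split the lower bound into its three independent contributions---the bulk elastic energy, the surface energy on the regular part $\widetilde\Gamma$, and the surface energy on the cut part $\Gamma^c$ (plus the singular term)---and to treat each one separately, following the architecture of \cite[Theorem 6.1]{CriFis}. If $\liminf_{\eps\to0}\cF_\eps(w_\eps,v_\eps,u_\eps)=+\infty$ there is nothing to prove, so we may assume the liminf is finite and pass to a subsequence realising it, along which $\sup_\eps\cF_\eps<\infty$; in particular the Modica--Mortola part is bounded, so $w_\eps\to\chi_\O$ in $L^1_\loc$ forces $\O$ to be a subgraph of some $h\in\bv(a,b)$ (the compactness argument of Section 5), and $\mu_\eps\wsto\mu$ with $\supp\mu\subset\Gamma$ gives an admissible limit $(\O,v,\mu)\in\cA$, so the finite value of $\cF$ is meaningful.

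For the \textbf{bulk term}, I would first note that $w_\eps+\eta_\eps\to\chi_\O$ in $L^1_\loc(Q^+)$ and, up to a further subsequence, pointwise a.e.; since $v_\eps\to v$ in $L^2_\loc(Q)$ and (using the energy bound together with Korn's inequality, exactly as in the compactness proof) $E(v_\eps)\wto E(v)$ in $L^2_\loc$, one applies a standard lower-semicontinuity result for integrands of the form $(s,F)\mapsto s\,W(F-E_0(y))$, convex in $F$ and with the good weight $s=\chi_\O$ in the limit, restricting to any $\O'\ccc\O$ where $w_\eps\to1$; letting $\O'\nearrow\O$ recovers $\int_\O W(E(v)-E_0(y))\dd\bfx$ from below. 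For the \textbf{surface term on $\widetilde\Gamma$}, the key is localisation: away from the cut set the graph of $h$ is, at the appropriate scale, the reduced boundary $\partial^\ast\O$ inside suitable open subsets of $Q^+$, so one invokes Theorem \ref{cctheorem} applied on such an open set $\Sigma$, together with Remark \ref{liminfsolution} to control the exact set of integration, obtaining $\int_{\widetilde\Gamma}\widetilde\psi(u)\dho+\theta\mu^s(\R^2)$ (the singular part of $\mu$ living on $\widetilde\Gamma$) as a lower bound for the corresponding piece of the Modica--Mortola energy.

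The \textbf{main obstacle} is the cut part $\Gamma^c$, and here I would follow the strategy already outlined in the introduction. For $\xi>0$ set $C_\xi=\{\bfx\in\Gamma^c:h^-(x)-y<\xi\}$; then $\Gamma^c\setminus C_\xi$ consists of finitely many vertical segments, which can be enclosed in pairwise disjoint rectangles $R$, and it suffices to treat one such rectangle with a single cut point $x^c$. Split $R=R^\ell\cup R^\rr$ along the vertical segment $\{x^c\}\times[0,h^-(x^c)]$, so that the Modica--Mortola energy over $Q^+\cap R$ is bounded below by the sum of the two contributions over $R^\ell$ and $R^\rr$. On each half, the measures $\mu_\eps\llcorner R^\ell$ and $\mu_\eps\llcorner R^\rr$ converge $\ast$-weakly to measures whose absolutely continuous parts $f,g\in L^1(\Gamma^c\setminus C_\xi)$ satisfy $f+g=u$ on $\Gamma^c\setminus C_\xi$ and whose singular parts add up to $\mu^s$ on that set; applying Theorem \ref{cctheorem} on each half-rectangle (where the relevant boundary is the vertical segment, counted once per side) yields $\int\widetilde\psi(f)\dho+\theta(\mu^s)^\ell(\R^2)$ and the analogous bound for $g$. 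Summing and using $\widetilde\psi(f)+\widetilde\psi(g)\geq\psi^c(u)$ whenever $f+g=u$ (the definition of $\psi^c$, Definition \ref{psic}) together with $(\mu^s)^\ell+(\mu^s)^\rr=\mu^s$, we obtain $\int_{\Gamma^c\setminus C_\xi}\psi^c(u)\dho+\theta\mu^s(\R^2)$ on that portion. Finally, letting $\xi\to0$ and using monotone convergence (recall $\psi^c\geq\inf\psi>0$ and the total masses are controlled) upgrades this to the integral over all of $\Gamma^c$. Adding the three lower bounds---and checking that the singular term $\theta\mu^s(\R^2)$ is not double-counted, since $\widetilde\Gamma$ and $\Gamma^c$ are disjoint---gives $\cF(\O,v,\mu)\leq\liminf_{\eps\to0}\cF_\eps(w_\eps,v_\eps,u_\eps)$, as desired.
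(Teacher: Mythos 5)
Your proposal follows essentially the same architecture as the paper's proof: lower semicontinuity of the bulk term on compact subsets of $\O$, localisation of the surface energy via Theorem \ref{cctheorem} on the regular part, isolation of the finitely many cuts $\Gamma^c\setminus C_\xi$ in small rectangles split into $R^\ell$ and $R^\rr$, the $\ast$-weak decomposition $\mu_\eps\llcorner R^{\ell/\rr}\wsto f/g$ with $f+g=u$ and the passage to $\psi^c$, and finally $\xi\to0$. The paper additionally selects the levels $\xi_\gamma$ so that $\mu(\Gamma\cap\partial C_{\xi_\gamma})=0$ and bookkeeps the residual $\int_{C_{\xi_\gamma}}u\dho$ and the split $\mu^s(\Gamma^c\setminus C_{\xi_\gamma})+\mu^s(\widetilde\Gamma\cup C_{\xi_\gamma})$ explicitly, but these are exactly the kind of measure-theoretic details your plan correctly anticipates.
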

\begin{proof}\emph{Step 1. Bulk term.} Take any compact set $K\subset\subset \Omega$. We have
	\begin{align}\label{bulktermiliminf}
		\nonumber	\liminf_{\eps\to0}\int_{Q^+} (w_\eps+\eta_\eps)W\big(E(v_\eps)-E_0(y)\big)\ \de \textbf{x}&\geq \liminf_{\eps\to0}\int_{K} (w_\eps+\eta_\eps)W\big(E(v_\eps)-E_0(y)\big)\ \de \textbf{x}\\[5pt]
		&\geq \int_KW\big(E(v)-E_0(y)\big)\ \de \textbf{x},
	\end{align}
	where in the last step we used the fact that $W$ is convex and the fact that $\liminf_{\eps\to0}w_\eps=1$ on every $K\subset\subset \Omega$. Now we can consider an increasing sequence of compact sets $K_j\subset\subset\O$ and conclude with the use of the Dominated Convergence Theorem.  Here we remark that, differently from \cite[Theorem 6.1]{CriFis}, the $v_\eps$'s are defined on the entire $Q$ and there is no need of the additional technicalities as in the mentioned Theorem, in which the displacement sequence was defined only in $H^1_\loc(\O_\eps;\R^2)$.\\
	
	\emph{Step 2. Surface term}. We would like to separate $\widetilde{\Gamma}$ and $\Gamma^c$. On the regular part we apply the result contained in \cite{CarCri} together with an error that  corresponds when the cut part meet the regular part. On the cut part, we isolate each vertical cut, far enough from the regular part, and we look at the contribution given by the left and right contribution given by $u_\eps$. \\
	Fix $\xi>0$ and consider the set
	\begin{align*}
		C_\xi\coloneqq \{\textbf{x}=(x,y)\in\Gamma^c:h^-(x)-y<\xi\}.
	\end{align*}
	By a standard measure theory argument, there is a sequence $(\xi_\gamma)_\gamma\subset\R$ such that $\xi_\gamma\to0$, as $\gamma\to0$ and
	\begin{align}\label{cxi}
		\mu(\Gamma\cap\partial C_{\xi_\gamma})=0,
	\end{align} 
	for every $\gamma>0$.
	As a consequence, from \cite[Lemma 5.3]{CriFis}  and for every $\gamma>0$, we have that $\Gamma^c\setminus C_{\xi_\gamma}$ consists of a finite number of vertical segments, whose projections  on the $x$-axes corresponds to the set $(x^i)_{i=1}^N$. Recalling the definition of $\Gamma^c$ (see Definition \ref{cuts}), it holds that $C_{\xi_\gamma}$ is monotonically converging to the empty set, as $\eps\to0$. Therefore, we get that
	\begin{equation}
		\mu(C_{\xi_\gamma})\to0,\quad\quad\quad
		\mu(\Gamma^c\setminus C_{\xi_\gamma})\to \mu(\Gamma^c),
	\end{equation}
	as $\gamma\to0$.
	Let $\delta=\delta(\eps)>0$ such that $\delta\to 0$ as $\eps\to0$ and we have $\delta<|x^i-x^j|$, for every $i,j=1,\dots,N$.
	As we have a finite number of cuts, in order to simplify the notation, we do the following construction as we had only one cut point, and then we repeat it for each other one. Therefore, let $(x^c,h(x^c))$ be the only cut point of $\Gamma$. 
	Consider the rectangle
	\begin{align*}
		R=R(\delta,\xi_\gamma)\coloneqq(x^c-\delta,x^c+\delta)\times (-\delta,h^-(x^c)-\xi_\gamma).
	\end{align*}
	Up to further reducing $\delta$, we can assume that $\widetilde{\Gamma}\cap R=\emptyset$. We can split $R$ as
	\begin{align*}
		R^\ell&=R^\ell(\delta,\xi_\gamma)\coloneqq(x^c-\delta,x^c)\times (-\delta,h^-(x^c)-\xi_\gamma),\\[5pt]
		R^\rr&=R^\rr(\delta,\xi_\gamma)\coloneqq(x^c-\delta,x^c)\times (-\delta,h^-(x^c)-\xi_\gamma).
	\end{align*}
	We remark that we need to consider rectangles that go below $\{y=0\}$, as a cut might touch the $y$-axes and a singular measure (e.g. a Dirac delta) might be present at the endpoint of such a cut. \\
	Therefore, we get
	\begin{align}\label{surfaceenergyR}
		\nonumber\liminf_{\eps\to0} \frac{1}{\sigma}\int_{Q^+} \Big[\eps|\nabla w_\eps|^2+\frac{1}{\eps}P(w_\eps)\Big]&\psi(u_\eps)\ \de\textbf{x}\geq \liminf_{\eps\to0}\frac{1}{\sigma}\int_{Q^+\cap R} \Big[\eps|\nabla w_\eps|^2+\frac{1}{\eps}P(w_\eps)\Big]\psi(u_\eps)\ \de\textbf{x}\\[5pt]
		&+\liminf_{\eps\to0}\frac{1}{\sigma}\int_{Q^+\setminus R} \Big[\eps|\nabla w_\eps|^2+\frac{1}{\eps}P(w_\eps)\Big]\psi(u_\eps)\ \de\textbf{x}.
	\end{align}
	\textit{Step 2.1 Cut part $\Gamma^c$.}  We deal now with the first term on the right-hand side of \eqref{surfaceenergyR}. We have
	\begin{align}\label{CCstep}
		\nonumber	\liminf_{\eps\to0}\frac{1}{\sigma}\int_{Q^+\cap R} \Big[\eps|\nabla w_\eps|^2+\frac{1}{\eps}P(w_\eps)\Big]&\psi(u_\eps)\ \de\textbf{x}
		\geq\liminf_{\eps\to0}\frac{1}{\sigma}\int_{Q^+\cap R^\ell} \Big[\eps|\nabla w_\eps|^2+\frac{1}{\eps}P(w_\eps)\Big]\psi(u_\eps)\ \de\textbf{x}\\[5pt]
		&+\liminf_{\eps\to0}\frac{1}{\sigma}\int_{Q^+\cap R^\rr} \Big[\eps|\nabla w_\eps|^2+\frac{1}{\eps}P(w_\eps)\Big]\psi(u_\eps)\ \de\textbf{x}.
	\end{align}
	Now, we localise the $\ast$-weak convergence of $u_\eps$ to $\mu$. Namely, we prove
	\begin{align*}
		\frac{u_\eps}{\sigma}\Big( \varepsilon |\nabla w_\varepsilon|^2+\frac{1}{\varepsilon} P(w_\varepsilon)  \Big)\mathcal{L}^2 \llcorner R^\ell&\wsto   f\mathcal{H}^1\llcorner (\Gamma^c\cap R) + (\mu^s)^\ell,\\[5pt]
		\frac{u_\eps}{\sigma}\Big( \varepsilon |\nabla w_\varepsilon|^2+\frac{1}{\varepsilon} P(w_\varepsilon)  \Big)\mathcal{L}^2 \llcorner R^\rr&\wsto g\mathcal{H}^1\llcorner (\Gamma^c\cap R) + (\mu^s)^\rr,
	\end{align*}
	for some $f,g\in L^1(\Gamma^c\setminus C_{\xi_\gamma})$, as $\eps\to0$ such that 
	\begin{align*}
		f+g=u_{|\Gamma^c\setminus C_{\xi_\gamma}}\quad\text{and}\quad (\mu^s)^\ell+(\mu^s)^\rr=\mu^s,
	\end{align*}
	where $(\mu^\ell)^s$ and $(\mu^\rr)^s$ are supported in $\Gamma^c\setminus C_{\xi_\gamma}$. Since the strategy is similar to the one proposed in \cite[Theorem 6.1]{CriFis}, we only report the main ideas. By the $\ast$-weak covergence of $u_\eps$ to $\mu$ and by compactness we have that up to a subsequence
	\begin{align*}
		\frac{u_\eps}{\sigma}\Big( \varepsilon |\nabla w_\varepsilon|^2+\frac{1}{\varepsilon} P(w_\varepsilon) \Big)\mathcal{L}^2 \llcorner R^\ell\stackrel{\ast}{\wto}\mu^\ell
	\end{align*}
	and
	\begin{align*}
		\frac{u_\eps}{\sigma}\Big( \varepsilon |\nabla w_\varepsilon|^2+\frac{1}{\varepsilon} P(w_\varepsilon)  \Big)\mathcal{L}^2 \llcorner R^\rr\stackrel{\ast}{\wto}\mu^\rr,
	\end{align*}
	for some Radon measures $\mu^\ell$ and $\mu^\rr$. It is possible to prove that $\supp \mu^\ell\subset \Gamma^c\setminus C_{\xi_\gamma}$ and  $\supp \mu^\rr\subset \Gamma^c\setminus C_{\xi_\gamma}$ and, by the Radon-Nikodym decomposition, there are $f,g\in L^1(\Gamma^c\setminus C_{\xi_\gamma})$ such that 
	\begin{align*}
		\mu^\ell=f\mathcal{H}^1\llcorner (\Gamma^c\cap R) + (\mu^s)^\ell\quad\text{and}\quad
		\mu^\rr=	g\mathcal{H}^1\llcorner (\Gamma^c\cap R) + (\mu^s)^\rr,
	\end{align*}
	with $	(\mu^\ell)^s$ and $	(\mu^r)^s$ are singular measures with respect to $f\ho\llcorner(\Gamma^c\setminus C_{\xi_\gamma})$ and $g\ho\llcorner(\Gamma^c\setminus C_{\xi_\gamma})$ respectively. It is a straight computation to prove the fact that $\mu=\mu^\ell+\mu^\rr$, which implies \eqref{f+g}.\\
	We now focus on the first term on the right-hand side of \eqref{CCstep}.\\
	Notice that since $w_\eps\to \chi_{\O\cap R}$ in $L^1(Q^+\cap R)$, we can apply Theorem $\ref{cctheorem}$ (see Remark \ref{liminfsolution} with $\Sigma=Q$ and $B=R^\ell$) and we get 	
	\begin{align*}
		\liminf_{\eps\to0}\frac{1}{\sigma}\int_{Q^+\cap R^\ell} \Big[\eps|\nabla w_\eps|^2+\frac{1}{\eps}P\big(w_\eps\big)\Big]\psi(u_\eps)\ \de\textbf{x}\geq \int_{\partial^\ast R^\ell}\widetilde{\psi}(f)\ \de\ho+\theta (\mu^s)^\ell(\Gamma^c\setminus C_{\xi_\gamma}).
	\end{align*}
	By applying the same argument to the second term on the right-hand side of \eqref{CCstep}, we get
	\begin{align}\label{surfaceterminsideR}
		\nonumber\liminf_{\eps\to0}\frac{1}{\sigma}\int_{Q^+\cap R} \Big[\eps&|\nabla w_\eps|^2+\frac{1}{\eps}P\big(w_\eps\big)\Big]\psi(u_\eps)\ \de\textbf{x}\\[5pt]
		\nonumber&\geq \int_{\partial^\ast R^\ell}\widetilde{\psi}(f)\ \de\ho+\theta (\mu^s)^\ell(\Gamma^c\setminus C_{\xi_\gamma})\\[5pt]
		\nonumber&\hspace{1cm}+\int_{\partial^\ast R^\rr}\widetilde{\psi}(g)\ \de\ho+\theta (\mu^s)^\rr(\Gamma^c\setminus C_{\xi_\gamma})\\[5pt]
		&\geq \int_{\Gamma^c\setminus C_{\xi_\gamma}} \psi^c(u)\ \de\ho+\theta\mu^s(\widetilde{\Gamma}\setminus C_{\xi_\gamma}),
	\end{align}
	where in the last step we used Definition \ref{psic} and \eqref{f+g}. \\
	
	\emph{Step 2.2 Regular part $\widetilde{\Gamma}$}. We deal with the second term on the right-hand side of \eqref{surfaceenergyR}.  We can see the remaining cut part $C_{\xi_\gamma}$ in $Q^+\setminus R$ as a singular measure with respect to $\ho$, namely 
	\begin{align*}
		u_\eps\cL^2(Q^+\setminus R)\stackrel{\ast}{\wto}\mu\llcorner(Q^+\setminus R)= u\ho\llcorner \widetilde{\Gamma}+\mu^s(Q^+\setminus R)+u\ho\llcorner C_{\xi_\gamma},
	\end{align*}
	where we used \eqref{cxi}.
	Now, we can apply Theorem \ref{cctheorem} and since $\widetilde{\Gamma}\cap C_{\xi_\gamma}\subset Q^+\setminus R$, we obtain
	\begin{align}\label{surfaceenergyoutsideR}
		\nonumber\liminf_{\eps\to0}\frac{1}{\sigma}\int_{Q^+\setminus R} \Big[\eps&|\nabla w_\eps|^2+\frac{1}{\eps}P\big(w_\eps\big)\Big]\psi(u_\eps)\ \de\textbf{x}\\[5pt]\geq &\int_{\widetilde{\Gamma}\cup C_{\xi_\gamma}} \widetilde{\psi} (u)\dho
+ \theta\Big(\mu^s(\widetilde{\Gamma}\cup C_{\xi_\gamma})+\int_{C_{\xi_\gamma}}u\dho\Big).	
	\end{align}
	\emph{Step 3. Conclusion.} Using \eqref{bulktermiliminf}, \eqref{surfaceterminsideR} and \eqref{surfaceenergyoutsideR} we get
	\begin{align}\label{finalstepliminf}
		\nonumber\int_K&W\big(E(v)-E_0(y)\big)\ \de \textbf{x}+\int_{\Gamma^c\setminus C_{\xi_\gamma}} \psi^c(u)\ \de\ho+\theta\mu^s(\Gamma^c\setminus C_{\xi_\gamma})\\[5pt]
		\nonumber&\hspace{0.5cm}+\int_{\widetilde{\Gamma}\cup C_{\xi_\gamma}} \widetilde{\psi} (u)\dho
		+ \theta\Big(\mu^s(\widetilde{\Gamma}\cup C_{\xi_\gamma})+\int_{C_{\xi_\gamma}}u\dho\Big)\\[5pt]
		\nonumber&\leq\liminf_{\eps\to0}\int_{Q^+} (w_\eps+\eta_\eps)W\big(E(v_\eps)-E_0(y)\big)\ \de \textbf{x}\\[5pt]
		\nonumber& \hspace{0.5cm}+\liminf_{\eps\to0}\frac{1}{\sigma}\int_{Q^+} \Big[\eps|\nabla w_\eps|^2+\frac{1}{\eps}P\big(w_\eps\big)\Big]\psi(u_\eps)\ \de\textbf{x}\\[5pt]
		&\leq\liminf_{\eps\to0} \cF_\eps(w_\eps,v_\eps,u_\eps).
	\end{align}
	Now, by letting $\xi_\gamma\to0$, by \eqref{cxi} and since $C_{\xi_\gamma}\to\emptyset$, we get 
	\begin{align}\label{cxito0}
		\mu^s(\Gamma^c\setminus C_{\xi_\gamma})\to\mu^s(\Gamma^c),\quad\mu^s(\widetilde{\Gamma}\cup C_{\xi_\gamma})\to \mu^s(\widetilde{\Gamma})\quad
		\text{and}\quad\int_{C_{\xi_\gamma}}u\ \dho\to0.
	\end{align}
	In conclusion, from \eqref{finalstepliminf} and \eqref{cxito0}, we get the desired liminf inequality
	\begin{align*}
		\cF(\O,v,u)\leq \liminf_{\eps\to0} \cF_\eps(w_\eps,v_\eps,u_\eps).
	\end{align*}
\end{proof}
\section{Limsup inequality}
The proof of the limsup inequality is not a direct application of the Theorem contained in \cite{CarCri}. This is due to the interplay of the phase-field variable in the bulk and in the surface term. For that reason our approach is partially inspired by \cite{BonCha02}.\\
The main result of this section is the mass-constrained limsup inequality.
\begin{theorem}\label{limsup}
	Let $(\O,v,\mu)\in \cA(m,M)$. Then, there exists $(w_\eps,v_\eps,u_\eps)_\eps\subset \cA_p(m,M)$, such that $(w_\eps,v_\eps,u_\eps)\to(\O,v,\mu)$ as $\eps\to0$, and
	\begin{align*}
	\limsup_{\eps\to0} \cF_\eps(w_\eps,v_\eps,u_\eps)\leq	\cF(\O,v,\mu).
	\end{align*}
\end{theorem}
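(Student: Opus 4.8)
The plan is to follow the two-stage diagonalisation scheme already announced after Theorem~\ref{mainresult2}: first reduce an arbitrary admissible configuration to a ``smooth'' one for which the sharp energy $\cF$ is explicitly computable; then build an explicit phase-field recovery sequence for such a configuration out of the almost-optimal profile~\eqref{optimalprofile1}; and finally glue the two steps by a diagonal extraction, which is legitimate because the topology of Definition~\ref{topology} is metrisable (Proposition~\ref{equicoercivenessgamma}).

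\emph{Step 1 (reduction to smooth configurations).} Given $(\O,v,\mu)\in\cA(m,M)$, I would first produce $(\O_n,v_n,\mu_n)\in\cA(m,M)$ with: $\O_n$ the subgraph of a function of class $\cC^\infty$ (hence $\Gamma_n^c=\emptyset$); $\mu_n=u_n\ho\llcorner\Gamma_n$ with $u_n$ grid constant (Definition~\ref{gridconstant}) and $u_n\leq s_0$; $(\O_n,v_n,\mu_n)\to(\O,v,\mu)$ in the topology of Definition~\ref{topologyoldpaper}; and $\cF(\O_n,v_n,\mu_n)\to\cF(\O,v,\mu)$. This is the content of Theorems~\ref{limsuptheorem} and~\ref{wrigglingthm}: it is essentially the recovery-sequence half of the relaxation theorem of \cite{CriFis} — cuts of $h$ are opened into narrow $\cC^\infty$ ``canyons'', the two walls carrying densities $r,t$ with $r+t=u$ chosen so that $\widetilde\psi(r)+\widetilde\psi(t)=\psi^c(u)$, which is exactly how $\int_{\Gamma^c}\psi^c(u)\dho$ is produced — combined with the wriggling process of \cite{CarCriDie,CarCri} and Lemma~\ref{lem:char_psi_s0}: on pieces where $u>s_0$ one replaces $\Gamma$ by a piece of graph inflated by the factor $u/s_0$ carrying the constant density $s_0$, so that the mass and the value $\widetilde\psi(u)\ho$ of the energy are both preserved while $\psi$ and $\widetilde\psi$ now agree on the relevant density; the singular part $\mu^s$ is realised likewise by spreading an atom of mass $m_0$ over a highly oscillatory arc of length $m_0/s_0$ and density $s_0$, whose $\psi$-weighted length is $\psi(s_0)\,m_0/s_0=\theta m_0$. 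The displacement is smoothed and extended to a fixed element of $H^1(Q;\R^2)$; it does not interact with the surface analysis. Finally the two mass constraints are restored exactly by an $O(1/n)$ vertical shift of $h_n$ on a fixed interval disjoint from the (former) cut set and a comparable correction of $u_n$, at the price of an $o(1)$ change of $\cF$.

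\emph{Step 2 (phase-field recovery for a smooth configuration; Theorem~\ref{realtheoremlimsup}).} Fix now $(\O,v,\mu)$ with $\O$ the subgraph of a $\cC^\infty$ function and $\mu=u\ho\llcorner\Gamma$, $u$ grid constant with $u\leq s_0$. Following \cite[Theorem 3.1]{BonCha02} and \cite[Proposition 2]{Mod87}, I would let $\gamma_\eps$ solve \eqref{optimalprofile1} and set $w_\eps(\bfx)\coloneqq\gamma_\eps(d_\O(\bfx)/\eps)$, with $d_\O$ the signed distance from $\O$; then $0\leq w_\eps\leq1$ and $w_\eps\to\chi_\O$ in $L^1_\loc(Q^+)$. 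For $v_\eps$ one takes the fixed smooth extension of $v$: since $0\leq w_\eps+\eta_\eps\leq1+\eta_\eps$, $w_\eps+\eta_\eps\to\chi_\O$ a.e., $\eta_\eps=o(\eps)$, and $W(E(v)-E_0(y))\in L^1(Q^+)$, dominated convergence gives $\int_{Q^+}(w_\eps+\eta_\eps)W(E(v_\eps)-E_0(y))\dd\bfx\to\int_{\O}W(E(v)-E_0(y))\dd\bfx$. The surface term is the classical Modica--Mortola computation along the $\eps$-tube around $\Gamma$: a coarea change of variables and the normalisation $\sigma=2\int_0^1\sqrt{P(t)}\dd t$ yield $\tfrac1\sigma(\eps|\nabla w_\eps|^2+\tfrac1\eps P(w_\eps))\cL^2\llcorner Q^+\wsto\ho\llcorner\Gamma$ and $\limsup_\eps\tfrac1\sigma\int_{Q^+}(\eps|\nabla w_\eps|^2+\tfrac1\eps P(w_\eps))\dd\bfx\leq\ho(\Gamma)$. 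To reproduce the weight I set $u_\eps$ equal to the constant value $u^j$ of $u$ on the portion of the $\eps$-tube facing the $j$-th piece of $\Gamma$ — so that $\psi(u_\eps)=\psi(u^j)=\widetilde\psi(u^j)$ there, no continuity of $\psi$ being needed — which gives $\mu_\eps\wsto u\ho\llcorner\Gamma=\mu$ and $\limsup_\eps\tfrac1\sigma\int_{Q^+}(\eps|\nabla w_\eps|^2+\tfrac1\eps P(w_\eps))\psi(u_\eps)\dd\bfx\leq\int_{\Gamma}\widetilde\psi(u)\dho$. The density mass $\int_{Q^+}u_\eps\dd\bfx$ is then brought to $m$ exactly by a \emph{reservoir}: away from the $\eps$-tube the Modica--Mortola density is negligible, so there one may put a bounded, slowly varying $u_\eps$ of total integral $m+o(1)$ (on $\O$, say) and adjust it by $o(1)$; this costs no surface energy in the limit and, since $u_\eps$ does not enter the bulk term, no bulk energy either. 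Likewise $\int_{Q^+}w_\eps\dd\bfx=M+O(\eps)$ is corrected to $M$ by an $O(\eps)$ modification of $h$ (or a low, narrow bump of $w_\eps$ away from $\Gamma$) at $o(1)$ cost. Summing, $\limsup_\eps\cF_\eps(w_\eps,v_\eps,u_\eps)\leq\cF(\O,v,\mu)$.

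\emph{Step 3 (diagonalisation) and the main difficulty.} Applying Step~2 to each $(\O_n,v_n,\mu_n)$ of Step~1 gives, for every $n$, an admissible sequence converging to $(\O_n,v_n,\mu_n)$ with $\limsup_\eps\cF_\eps\leq\cF(\O_n,v_n,\mu_n)$; since $\cF(\O_n,v_n,\mu_n)\to\cF(\O,v,\mu)$ and the topology is metrisable, a standard diagonal choice $\eps\mapsto n(\eps)$ yields a sequence in $\cA_p(m,M)$ converging to $(\O,v,\mu)$ with $\limsup_\eps\cF_\eps\leq\cF(\O,v,\mu)$, which is the claim. The hard part is the interplay, in Step~2, between the two roles of the phase field: $w_\eps$ must simultaneously be $\approx1$ on $\O$ so that the convex bulk energy passes to the limit (with the $\eta_\eps$-regularisation keeping $\cF_\eps$ finite where $w_\eps\equiv0$) and develop a \emph{sharp} $\eps$-layer carrying the surface energy with the correct $\widetilde\psi$-weight; the choice $w_\eps=\gamma_\eps(d_\O/\eps)$ is precisely what makes these compatible. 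Compounding this is the fact that $w_\eps$ and $u_\eps$ are \emph{independent} variables (cf.\ the remark after Theorem~\ref{compactness} and Figure~\ref{figcompactness}), so $u_\eps$ must be placed by hand inside the layer of $w_\eps$ for $\mu_\eps$ to concentrate on $\Gamma$, while the reservoir needed for the \emph{exact} constraint $\int_{Q^+}u_\eps\dd\bfx=m$ must sit where the Modica--Mortola density vanishes; and, already in Step~1, carrying the wriggling and the $\psi^c$-splitting through while keeping $(\O_n,v_n,\mu_n)$ inside $\cA(m,M)$ is the delicate bookkeeping inherited from \cite{CriFis}.
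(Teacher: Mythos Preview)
Your proposal follows the same three-stage scheme as the paper: reduce to smooth configurations via the approximation results of \cite{CriFis} (Theorems~\ref{limsuptheorem} and~\ref{wrigglingthm}), build the phase-field recovery out of the optimal profile $w_\eps=\gamma_\eps(d_\O/\eps)$ as in \cite{Mod87,BonCha02}, and diagonalise using the metrisability of the topology. The only differences are in the bookkeeping of Step~2.

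For the constraints, the paper rescales rather than patches: it sets $w_\eps(x,y)\coloneqq z_\eps(x,\alpha_\eps y)$ with $\alpha_\eps\to1$ to fix the volume exactly, and takes the density on each grid rectangle as $u_{\eps|R^j}\coloneqq\tfrac{\ho(\Gamma\cap R^j)}{p_\eps^j}\,u^j$ (with $p_\eps^j$ the Modica--Mortola mass on $R^j$), so that the weighted measure $\mu_\eps$ has total mass exactly $m$; this perturbation of $u^j$ forces a continuity argument for $\psi$, which the paper gets by first reducing to $\psi$ convex. Your choice $u_\eps=u^j$ on the tube together with an external reservoir (sitting where the Modica--Mortola density vanishes) is a legitimate alternative that sidesteps that continuity step altogether.

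For the displacement, the paper does not keep a fixed extension but sets $v_\eps(x,y)\coloneqq v(x,y-\ell\eps)\,\widetilde w_\eps(x,y)$, so that $v_\eps$ vanishes where the phase field does; the price is the cross terms $v\odot\nabla\widetilde w_\eps$, which are then shown to be killed by $\eta_\eps=o(\eps)$ (this is the computation \eqref{finallimsup3}--\eqref{finallimsup6}). Your dominated-convergence shortcut is lighter, but it tacitly assumes $W(E(v)-E_0(y))\in L^1(Q^+)$, which is not automatic since $Q^+$ is an unbounded strip and $E_0\not\equiv0$ above the film --- you should say how the extension of $v$ above $\Gamma$ is chosen so that this integrability holds (e.g.\ extending so that $E(v)=E_0$ for $y$ large).
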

Due to the work carried out in \cite{CriFis}, we can reduce to prove the existence of a recovery sequence for a simpler case. 
We now see, step by step, the argument that allows us to reduce to the case in Theorem \ref{realtheoremlimsup}. 
\begin{remark}
	In what follows, a diagonal argument will be used to obtain the desired result. We note that this is possible as we are in the setting in which the $\Gamma$-convergence can be rephrased in terms of the liminf and limsup inequality.
\end{remark}
We start with an approximation results proved in \cite{CriFis}.
\begin{theorem}\label{limsuptheorem}
 For every $(\O,v,\mu)\in\cA(m,M)$, there exists a sequence $(\O_\eps,v_\eps,\mu_\eps)_\eps\subset \cA_r(m,M)$, with $\mu_\eps=u_\eps\ho\llcorner\Gamma_\eps$, where $u_\eps$ is grid-constant, such that $(\O_\eps,v_\eps,\mu_\eps)\to (\O,v,\mu)$ as $\eps\to0$. Moreover,
\begin{align*}
\limsup_{\eps\to0} \cF(\O_\eps,v_\eps,\mu_\eps)\leq \cF (\O,v,\mu).
\end{align*}
\begin{proof}
The proof is given by \cite[Proposition 7.6 and Proposition 7.7]{CriFis}.
\end{proof}
\end{theorem}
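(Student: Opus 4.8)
The plan is to build the recovery sequence as a diagonal of three essentially independent regularisations of a sharp configuration $(\O,v,\mu)\in\cA(m,M)$; this is legitimate because, by Proposition~\ref{equicoercivenessgamma}, the topology of Definition~\ref{topologyoldpaper} is (locally) metrisable on the relevant class, so a slow enough diagonal extraction preserves both the convergence and the $\limsup$. First I would regularise the $\bv$ profile $h$: away from the cut set one mollifies $h$ in the usual way, obtaining a $\cC^\infty$ profile, while each vertical cut $\{x^c\}\times[h(x^c),h^-(x^c)]$ is instead \emph{opened} into a thin notch of width $\delta$ dug down to the height $h(x^c)$, whose two almost vertical walls have total length converging, as $\delta\to0$, to $2\,\ell(x^c)$ with $\ell(x^c):=h^-(x^c)-h(x^c)$; the factor $2$ is exactly the doubling built into Definition~\ref{psic} (recall $\psi^c(s)=2\widetilde\psi(s/2)$ by convexity of $\widetilde\psi$). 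Reducing first to finitely many ``macroscopic'' cuts by discarding the part $C_\xi=\{\bfx=(x,y)\in\Gamma^c:h^-(x)-y<\xi\}$ (so that $\Gamma^c\setminus C_\xi$ is a finite union of segments, see \cite[Lemma~5.3]{CriFis}) and then letting $\xi\to0$, this is done so that $\R^2\setminus\O_\delta\stackrel{H}{\to}\R^2\setminus\O$ and $|\O\triangle\O_\delta|\to0$. For the displacement one takes $v_\delta$ to be the (zero outside $\O$) restriction of $v$ to $\O_\delta$, mollified in a $\delta$-collar of $\partial\O_\delta$ so as to be admissible on the Lipschitz domain; since $v_\delta=v$ on every $K\ccc\O$ for $\delta$ small, one gets $v_\delta\wto v$ in $H^1_{\loc}(\O)$, while $\int_{\O_\delta}W\big(E(v_\delta)-E_0(y)\big)\to\int_\O W\big(E(v)-E_0(y)\big)$ follows from $|\O\triangle\O_\delta|\to0$, boundedness of $\|E(v_\delta)\|_{L^2}$, and convexity of $W$ (the contribution of the thin notches being negligible).

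The heart of the construction is the density $u_\delta$ on $\Gamma_\delta$. On the mollified (regular) part of the graph I set $u_\delta\approx u$. On the two walls of the notch over a cut point $x^c$ I put the constant density $u(x^c)/2$ on each wall: since each wall length converges to $\ell(x^c)$, this cut contributes to $\int_{\Gamma_\delta}\widetilde\psi(u_\delta)\dho$ an amount converging to $2\widetilde\psi(u(x^c)/2)\,\ell(x^c)=\psi^c(u(x^c))\,\ell(x^c)$, and summing over the finitely many macroscopic cuts and letting $\xi\to0$ recovers $\int_{\Gamma^c}\psi^c(u)\dho$; moreover the total mass carried by the two walls converges to $\int_{\Gamma^c}u\dho$, the correct value. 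The singular part $\mu^s$ (which may sit on $\widetilde\Gamma$ as well as at the tips of the cuts) is reproduced by concentration: over a shrinking arc of $\Gamma_\delta$ of length $\ell$ one places the constant density $\mu^s(\mathrm{arc})/\ell$, whose energy $\ell\,\widetilde\psi\big(\mu^s(\mathrm{arc})/\ell\big)$ tends to $\theta\,\mu^s(\mathrm{arc})$ because $\widetilde\psi(s)/s\to\theta$ (Lemma~\ref{lem:char_psi_s0} and Definition~\ref{theta}); a partition of $\mu^s$ into small pieces then gives the global contribution $\theta\,\mu^s(\R^2)$. Altogether $\mu_\delta:=u_\delta\ho\llcorner\Gamma_\delta\wsto\mu$ and $\limsup_{\delta\to0}\big(\int_{\O_\delta}W\big(E(v_\delta)-E_0(y)\big)+\int_{\Gamma_\delta}\widetilde\psi(u_\delta)\dho\big)\le\cF(\O,v,\mu)$. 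Finally, to make $u_\delta$ grid-constant in the sense of Definition~\ref{gridconstant}, fix a generic rectangular grid of mesh $\to0$ for which $\ho(\Gamma_\delta\cap\partial R^j)=0$ and replace $u_\delta$ on each cell by its $\ho\llcorner\Gamma_\delta$-average; by continuity of $\widetilde\psi$ and a Vitali argument using its linear growth, this changes $\int_{\Gamma_\delta}\widetilde\psi(u_\delta)\dho$ and $\mu_\delta(\R^2)$ by $o(1)$ and does not affect the $\ast$-weak limit. The constraints $\int_a^b h_\delta=M$ and $\mu_\delta(\R^2)=m$ are then restored by harmless corrections vanishing as $\delta\to0$ (a thin bump of the profile far from every cut for the area; a factor $1+o(1)$ on $u_\delta$ for the mass), which leave all the above limits unchanged.

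I expect the main obstacle to be the geometry of the notch construction: one must simultaneously keep the Hausdorff convergence $\R^2\setminus\O_\delta\stackrel{H}{\to}\R^2\setminus\O$, force the \emph{total} wall length over each cut to converge to exactly $2\ell(x^c)$ (neither more, which would overshoot $\psi^c$, nor less), and make this compatible with the density placement and with the subsequent grid-constant discretisation, all uniformly after the truncation to finitely many macroscopic cuts; controlling the elastic energy in the shrinking collars where $v_\delta$ is mollified, and verifying that the $o(1)$ corrections enforcing the mass constraints genuinely vanish in the $\ast$-weak and $H^1_{\loc}$ topologies, are the remaining technical points. This is precisely the content of \cite[Propositions~7.6 and~7.7]{CriFis}, which combine the cut-opening scheme of \cite{FonFusLeoMor07,BonCha02} with the density-splitting and wriggling ideas of \cite{CarCriDie,CarCri}.
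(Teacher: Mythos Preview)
Your proposal is correct and follows essentially the same route as the paper, which simply defers to \cite[Propositions~7.6 and~7.7]{CriFis}: reduce to finitely many macroscopic cuts via $C_\xi$, open each cut into a thin notch, place half the density on each wall (the identity $\psi^c(s)=2\widetilde\psi(s/2)$ you invoke is indeed immediate from convexity of $\widetilde\psi$ and is the key observation), absorb $\mu^s$ by concentration on shrinking arcs using $\widetilde\psi(s)/s\to\theta$, and finally discretise to a grid-constant density. The one notational slip---writing $u(x^c)$ as if it were a single number rather than a function of the height $y$ along the cut---is harmless once you note that the same splitting $u(x^c,y)/2$ on each wall works pointwise in $y$.
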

Therefore, Theorem \ref{limsup} will be a consequence of the following result together with a diagonal argument.
\begin{theorem}
	Let $(\O,v,\mu)\in \cA_r(m,M)$ be such that $\mu=u\ho\llcorner\Gamma$ with $u\in L^1(\Gamma)$ grid constant. Then, there exists $(w_\eps,v_\eps,u_\eps)_\eps\subset A_p(m,M)$, such that $(w_\eps,v_\eps,u_\eps)\to(\O,v,\mu)$ as $\eps\to0$, and
	\begin{align*}
		\limsup_{\eps\to0} \cF_\eps(w_\eps,v_\eps,u_\eps)\leq	\int_{\O}W\big(E(v)-E_0(y)\big)\ \de \textbf{x}+\int_{\Gamma}\widetilde{\psi}(u)\dho.
	\end{align*}
\end{theorem}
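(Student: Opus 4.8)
\emph{Setup.} Since $\O$ is the subgraph of a Lipschitz function $h$, we have $\Gamma^c=\emptyset$, $\widetilde\Gamma=\Gamma$ and $\mu^s=0$, so the right-hand side of the statement is exactly $\cF(\O,v,\mu)$. The plan is to merge the Bonnet--Chambolle profile construction of \cite{BonCha02} with the wriggling technique of \cite{CarCriDie,CarCri,CriFis}, building the bulk and the surface parts of the recovery sequence separately and gluing them only at the end.

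\emph{Bulk term.} Extend $v$ to some $\ti v\in H^1(Q;\R^2)$ (possible as $\O$ is Lipschitz) and set $v_\eps\coloneqq\ti v$ for every $\eps$, so that $v_\eps\to v$ in $L^2_\loc(Q)$ is automatic. Once the phase fields $w_\eps$ are constructed with $0\le w_\eps\le 1$, $w_\eps\to\chi_\O$ in $L^1_\loc(Q^+)$ and $w_\eps$ essentially supported in a vanishing neighbourhood of the film surface, dominated convergence and $\eta_\eps=o(\eps)$ give $\int_{Q^+}(w_\eps+\eta_\eps)W(E(v_\eps)-E_0(y))\,\de\bfx\to\int_\O W(E(v)-E_0(y))\,\de\bfx$; this is the routine part, analogous to Step~1 of Theorem~\ref{liminftheorem} and to \cite[Thm.~6.1]{CriFis}, and simpler here because $v_\eps$ is defined on all of $Q$.

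\emph{Surface term via wriggling.} Since $u$ is $\delta$-grid constant, fix a $\delta$-admissible cover $(R^j)_{j=1}^N$ with $u|_{R^j}=u^j\in\R$, and let $s_0\in(0,+\infty]$, $\theta>0$ be as in Lemma~\ref{lem:char_psi_s0}, so $\psi=\widetilde\psi$ on $[0,s_0]$ and $\widetilde\psi(s)=\theta s$ on $[s_0,+\infty)$. On cells with $u^j\le s_0$ I keep $\Gamma$ and the density $u^j$. On cells with $u^j>s_0$ I replace the portion of $\Gamma$ over $R^j$ by a ``wiggled'' graph with the same endpoints, amplitude tending to $0$ but total length $r^j\,\ho(\Gamma\cap R^j)$ with $r^j\coloneqq u^j/s_0>1$, carrying the constant density $s_0$; since $\widetilde\psi(s_0)=\psi(s_0)=\theta s_0$ and $\widetilde\psi(u^j)=\theta u^j=r^j\theta s_0$, this changes neither the surface mass nor the target surface energy on $R^j$, because $r^j\psi(s_0)\,\ho(\Gamma\cap R^j)=\widetilde\psi(u^j)\,\ho(\Gamma\cap R^j)$ (a possible gap between $\psi$ and its convex envelope is absorbed by first splitting the density over a finer cover, as in \cite{CarCriDie,CarCri}). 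Let $\O_\eps$, $\Gamma_\eps$ be the resulting subgraph and its graph, and $u_\eps$ the associated finite-valued grid-constant density. Define, as in \cite{BonCha02}, $w_\eps(\bfx)\coloneqq\gamma_\eps\big(d_{\O_\eps}(\bfx)/\eps\big)$ with $\gamma_\eps$ the (suitably oriented and truncated) nearly optimal profile of \eqref{optimalprofile1}, and let $u_\eps$ also denote its extension to $Q^+$: the above piecewise-constant values on the $O(\eps)$-transition layer around $\Gamma_\eps$, a bounded constant $\kappa_\eps$ on a fixed box lying (for small $\eps$) outside that layer, and $0$ elsewhere. Because the Modica--Mortola density $\tfrac1\sigma[\eps|\nabla w_\eps|^2+\tfrac1\eps P(w_\eps)]$ vanishes off the layer and $u_\eps$ is constant on each relevant piece of it, a co-area computation together with the profile estimate gives
\begin{align*}
\frac1\sigma\int_{Q^+}\Big[\eps|\nabla w_\eps|^2+\tfrac1\eps P(w_\eps)\Big]\psi(u_\eps)\,\de\bfx
&=\sum_{j}\psi\big(u_\eps|_{R^j}\big)\,\ho(\Gamma_\eps\cap R^j)\,(1+o(1))\\
&\longrightarrow\ \int_\Gamma\widetilde\psi(u)\dho,
\end{align*}
and the same localisation yields $\mu_\eps\wsto u\ho\llcorner\Gamma=\mu$ and $w_\eps\to\chi_\O$ in $L^1_\loc(Q^+)$ (the amplitudes vanish).

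\emph{Constraints, diagonalisation, and the main obstacle.} The constraint $\int_{Q^+}u_\eps\,\de\bfx=m$ is enforced by the value $\kappa_\eps$ on the far box, which contributes neither to $\mu_\eps$ nor to the surface energy since $w_\eps\equiv 0$ there; $\int_{Q^+}w_\eps\,\de\bfx=M$ is enforced by an $o(1)$ vertical translation of $\O_\eps$, perturbing every quantity by $o(1)$; and a wetting layer of height $\eps$ is inserted wherever $h<\eps$ so that all transition layers stay inside $Q^+$. As the cover is finite, only the usual diagonal argument tying the wriggling scale to $\eps$ is needed; Theorem~\ref{limsup}, hence Theorems~\ref{mainresult1} and \ref{mainresult2}, then follow by combining with Theorem~\ref{limsuptheorem}. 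The main obstacle is the interplay of two vanishing scales: the transition layer has width $\sim\eps$ while $\Gamma_\eps$ oscillates with its own vanishing amplitude and wavelength, so the nearly optimal profile estimate — in particular the control of $\ho(\{d_{\O_\eps}=s\})$ against $\ho(\Gamma_\eps)$ for $|s|\lesssim\eps$ — must remain uniform as the regularity of $\Gamma_\eps$ degenerates; this is exactly where the $\sqrt\eps$-regularisation in \eqref{optimalprofile1} and the quantitative version of the wriggling from \cite{CriFis} enter, and why \cite{CarCri} cannot be invoked directly.
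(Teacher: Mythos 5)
Your overall plan — build the phase field from a nearly optimal profile around a wiggled graph, put a piecewise-constant density on the transition layer, then adjust the two constraints — is the same as the paper's in spirit, but the paper factors the proof in a way that entirely avoids the ``main obstacle'' you flag at the end. The statement is deduced in the paper from Theorem~\ref{wrigglingthm} (the wriggling, taken verbatim from \cite[Prop.\ 7.8]{CriFis} \emph{at the sharp level}) composed with Theorem~\ref{realtheoremlimsup}, whose target is $\cH$ with $\psi$, not $\widetilde\psi$ — the phase-field step does \emph{no} wriggling at all — followed by a diagonal argument. Because in Theorem~\ref{realtheoremlimsup} the graph being approximated is fixed while $\eps\to 0$, the two vanishing scales you worry about never compete.

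Your merged version does create that competition, and your proposed resolution does not hold up. The $\sqrt\eps$-term in \eqref{optimalprofile1} is an analytic regularisation of the ODE defining $\gamma_\eps$; it says nothing about the geometric quantity $\ho(\{d_{\O_\eps}=s\})$ for $|s|\lesssim\eps$, which genuinely degrades (it saturates at the length of the coarsened curve, not of $\Gamma_\eps$) when $\Gamma_\eps$ oscillates at wavelength $\lesssim\eps$. The quantitative wriggling of \cite{CriFis} is likewise a statement about sharp-interface energies, not about Minkowski content of fast oscillating curves. The only way to make your single-sweep construction rigorous is to force the wriggling wavelength $\gg\eps$ and then diagonalise over the two parameters — which is exactly what the paper's two-theorem factoring builds in by design. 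As written, ``merge and diagonalise in words'' produces no bound.

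A second, independent gap is the choice $v_\eps\coloneqq\ti v$ for a fixed $H^1(Q)$ extension. In $\cA$ the displacement is extended by zero on $Q\setminus\O$, so the target $v$ in Definition~\ref{topology}(ii) is the zero extension; since $\ti v\neq 0$ on $Q\setminus\O$ the difference $\ti v-v$ is a fixed nonzero function and $v_\eps\not\to v$ in $L^2_\loc(Q)$. The paper instead sets $v_\eps(x,y)\coloneqq v(x,y-\ell\eps)\,w_\eps(x,y-\ell\eps)$, translated inward by the Lipschitz constant so that the translated $v$ is defined wherever the mask is positive, and multiplied by $\widetilde w_\eps$ so that $v_\eps\to v\chi_\O$ in $L^2_\loc$ and the extra contribution $v\odot\nabla\widetilde w_\eps$ to $E(v_\eps)$ lives only on the $O(\eps)$-layer, where it is killed by the $\eta_\eps=o(\eps)$ prefactor. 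This masking (and the resulting estimates \eqref{finallimsup3}--\eqref{finallimsup6}) is not a ``routine'' dominated-convergence step and cannot be skipped.

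Minor differences that do work but diverge from the paper: enforcing $\int_{Q^+}w_\eps=M$ by a vertical translation rather than the paper's $y$-rescaling by $\alpha_\eps$ is fine; enforcing the density constraint by a far box where $w_\eps\equiv 0$ is also fine and in fact matches the literal constraint $\int_{Q^+}u_\eps\,\de\bfx=m$ more directly than the paper's reweighting by $\ho(\Gamma\cap R^j)/p_\eps^j$. You also skip the paper's preliminary $\cC^\infty$-convolution step (Step~1 of Theorem~\ref{realtheoremlimsup}), which is needed so that $d_\O$ is $\cC^2$ near $\Gamma$ and the Minkowski-content and coarea arguments apply cleanly; for a merely Lipschitz $h$ this regularisation should not be omitted.
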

Using the following approximation result proved in \cite{CriFis} we can simplify even further our statement by considering the functional with $\psi$ instead of $\widetilde{\psi}$ in the surface term.
\begin{theorem}\label{wrigglingthm}
	Let $(\O,v,\mu)\in \cA_r(m,M)$. Then, there exists a sequence $(\O_\eps,v_\eps,\mu_\eps)_\eps\subset \cA_r(m,M)$ such that  $(\O_\eps,v_\eps,\mu_\eps)\to(\O,v,\mu)$ and
	\begin{align*}
		\lim_{\eps\to\infty} \cH(\O_\eps,v_\eps,\mu_\eps)= \int_{\O}W\big(E(v)-E_0(y)\big)\ \de \textbf{x}+\int_{\Gamma}\widetilde{\psi}(u)\dho,
	\end{align*}
	where $\cH$ is defined \eqref{hfunctional}.
\end{theorem}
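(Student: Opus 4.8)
The plan is to prove Theorem~\ref{wrigglingthm} by the explicit \emph{wriggling} construction (as in \cite{CarCriDie,CriFis}), carried out cell by cell on a fine grid covering $\Gamma$ and arranged so that both mass constraints stay in force. First I would reduce to the case in which $u$ is grid constant: a general $u\in L^1(\Gamma)$ is approximated in $L^1(\Gamma)$ by functions constant on the cells of a $\delta$-admissible cover $(R^j)_{j=1}^N$ of $\Gamma$, rescaled so that $\int_\Gamma u\dho=m$ is kept, and a diagonal argument recovers the general statement; so assume $u\equiv u^j$ on $\Gamma\cap R^j$. Let $s_0\in(0,+\infty]$ and $\theta$ be as in Lemma~\ref{lem:char_psi_s0} (using $\widetilde\psi=\widetilde{\psi^{\text{cvx}}}$), so that $\widetilde\psi=\psi^{\text{cvx}}$ on $[0,s_0]$ and $\widetilde\psi$ is linear with slope $\theta$ on $[s_0,+\infty)$. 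For each value $u^j$ I would fix a near-optimal ``splitting--stretching'' scheme for $\widetilde\psi(u^j)$, i.e.\ finitely many densities $s^j_1,\dots,s^j_{k_j}\geq0$ and weights $\lambda^j_1,\dots,\lambda^j_{k_j}>0$ with $\sum_i\lambda^j_i\geq1$, $\sum_i\lambda^j_i s^j_i=u^j$ and $\sum_i\lambda^j_i\psi(s^j_i)\leq\widetilde\psi(u^j)+\rho_j$, $\rho_j\to0$; in the prototypical cases this is either an oscillation of the density at fixed length ($\sum_i\lambda^j_i=1$, which produces $\psi^{\text{cvx}}(u^j)$ when $u^j\leq s_0$) or a stretching ($k_j=1$, $s^j_1=s_0$, $\lambda^j_1=u^j/s_0>1$, which produces $(u^j/s_0)\psi(s_0)=\widetilde\psi(u^j)$ when $u^j>s_0$, using $\psi(s_0)=\widetilde\psi(s_0)$).

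Next, on each cell $R^j$ I would replace the Lipschitz portion $\Gamma\cap R^j$ of the graph of $h$ by a Lipschitz graph $\Gamma_\eps\cap R^j$ oscillating around it with amplitude $\eta_\eps\to0$ and total length $\big(\sum_i\lambda^j_i\big)\ho(\Gamma\cap R^j)+o(1)$, subdivided along arc length into pieces of length $\lambda^j_i\,\ho(\Gamma\cap R^j)$ on which $u_\eps\equiv s^j_i$. Since $u$ takes finitely many values on the fixed grid the oscillation slopes stay bounded, so the pieces can be glued to $h$ keeping $h_\eps$ Lipschitz for every $\eps$ with $\|h_\eps-h\|_\infty\to0$; a vertical translation (or one extra small bump placed in a cell with $u^j\leq s_0$) of size $o(1)$ then restores $\int_a^b h_\eps\dd x=M$. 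The displacement $v_\eps$ on $\O_\eps$ is $v$ composed with a bi-Lipschitz map equal to the identity outside an $\eta_\eps$-neighbourhood of $\Gamma$; since $|\nabla v|^2$ and $|E_0|^2$ are integrable and the modified strip shrinks to $\Gamma$, this gives $\int_{\O_\eps}W(E(v_\eps)-E_0(y))\dd\bx\to\int_{\O}W(E(v)-E_0(y))\dd\bx$. The three requirements of Definition~\ref{topologyoldpaper} are then checked: $\R^2\setminus\O_\eps\stackrel{H}{\to}\R^2\setminus\O$ from $\|h_\eps-h\|_\infty\to0$; $v_\eps\wto v$ in $H^1_\loc(\O;\R^2)$ from the bi-Lipschitz maps tending to the identity; and $\mu_\eps\coloneqq u_\eps\ho\llcorner\Gamma_\eps\wsto\mu$ because the mass over each cell is conserved, $\sum_i\lambda^j_i\,\ho(\Gamma\cap R^j)\,s^j_i=u^j\,\ho(\Gamma\cap R^j)$, so $\mu_\eps(\R^2)=m$, while the oscillations sit in $\eta_\eps$-neighbourhoods of $\Gamma$, so testing against $C_b$ functions yields the weak-$*$ limit. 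Finally the surface energy equals $\sum_j\sum_i\lambda^j_i\,\ho(\Gamma\cap R^j)\,\psi(s^j_i)+o(1)\leq\sum_j\big(\widetilde\psi(u^j)+\rho_j\big)\ho(\Gamma\cap R^j)+o(1)$, which after letting $\delta\to0$, $\rho_j\to0$ and diagonalising converges to $\int_\Gamma\widetilde\psi(u)\dho$; the matching lower bound $\liminf_\eps\cH(\O_\eps,v_\eps,\mu_\eps)\geq\cF(\O,v,\mu)=\int_{\O}W(E(v)-E_0(y))\dd\bx+\int_\Gamma\widetilde\psi(u)\dho$ follows from the relaxation theorem of \cite{CriFis} recalled above (for a Lipschitz subgraph $\Gamma^c=\emptyset$, $\widetilde\Gamma=\Gamma$, $\mu^s=0$), so both limits coincide and the identity in the statement holds.

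I expect the main obstacle to be the geometric bookkeeping of the second step: producing, in each cell and for each $\eps$, an oscillating Lipschitz graph with prescribed length amplification but vanishing amplitude, subdividing it into pieces of prescribed arc length, and gluing adjacent cells into one Lipschitz profile --- all while enforcing the two global equalities $\int_a^b h_\eps\dd x=M$ and $\mu_\eps(\R^2)=m$. The constraint on $\mu$ is automatic (arc length times density is conserved in every cell), but the area constraint must be repaired by a correction whose $o(1)$ size must be checked not to spoil the surface energy or the other two convergences; one also has to rule out creation of elastic energy in the thin oscillating strip, which relies on the absolute continuity of $\int W(E(v)-E_0(y))\dd\bx$ near $\Gamma$ together with a careful choice of the bi-Lipschitz maps there.
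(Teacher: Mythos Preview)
Your proposal is correct and describes precisely the wriggling construction that underlies the result; the paper itself does not give an independent proof but simply cites \cite[Proposition~7.8]{CriFis}, where the argument you sketch is carried out in full. In particular, your cell-by-cell scheme (density oscillation when $u^j\leq s_0$, length stretching with density $s_0$ when $u^j>s_0$, using $\psi(s_0)=\widetilde\psi(s_0)$) is exactly the mechanism discussed in the introduction and in \cite{CarCriDie,CriFis}, and your handling of the two mass constraints and of the displacement via a bi-Lipschitz perturbation near $\Gamma$ matches the strategy of those references.
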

\begin{proof}
	The proof is given in \cite[Proposition 7.8]{CriFis}.
\end{proof}
\begin{remark}
	From a standard result (see, for instance, \cite[Theorem 5.32 and Remark 5.33]{FL_book}) we can assume in without loss of generality that $\psi$ is convex. 
\end{remark}
Thus, the following Theorem, together with a diagonalisation argument, will give the limsup inequality. 
\begin{theorem}\label{realtheoremlimsup}
For every configuration $(\O,v,\mu)\in \cA_r(m,M)$, there is a sequence $(w_\eps,v_\eps,u_\eps)_\eps\subset \cA_p(m,M)$ such that $(w_\eps,v_\eps,u_\eps)\to(\O,v,\mu)$ as $\eps\to0$. Moreover,
\begin{align*}
	\limsup_{\eps\to0} \cF_{\eps}(w_\eps,v_\eps,u_\eps)\leq \cH (\O,v,\mu).
\end{align*}
\end{theorem}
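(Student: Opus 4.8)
The plan is the following. We may assume $\cH(\O,v,\mu)<+\infty$, since otherwise there is nothing to prove; in particular $W(E(v)-E_0(\cdot))\in L^1(\O)$ and $\psi(u)\in L^1(\Gamma)$. As $\O$ is the sub-graph of a Lipschitz function on the bounded interval $(a,b)$, the height $h$ is bounded, so all the functionals are insensitive to the region $\{y>T\}$ for $T>\norm{h}_\infty$ and we may work on a bounded domain. Since $\Gamma$-convergence is equivalent here to the liminf/limsup inequalities, a diagonal argument lets us first approximate $(\O,v,\mu)$ by configurations $(\O_k,v,\mu_k)\in\cA_r(m,M)$ with smooth heights $h_k$, recovering $\cH$ in the limit: one mollifies $h$ to get $h_k$, transports $u$ along the graph correspondence $(x,h(x))\mapsto(x,h_k(x))$, and adjusts volume and mass by an infinitesimal rescaling --- routine, in the spirit of \cite{CriFis} and of Theorem \ref{limsuptheorem}. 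Hence from now on $h$ is smooth and $\Gamma$ admits a genuine tubular neighbourhood.

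The displacement needs no work: we take $v_\eps\coloneqq v$, so $v_\eps\to v$ in $L^2_\loc(Q)$; since $0\leq w_\eps+\eta_\eps\leq 1+\eta_\eps$ (with $w_\eps$ the phase field built below, $w_\eps\to\chi_\O$ a.e.\ along a subsequence) and $W(E(v)-E_0(\cdot))\in L^1$, dominated convergence gives
\[
\lim_{\eps\to0}\int_{Q^+}(w_\eps+\eta_\eps)W\big(E(v)-E_0(y)\big)\dd\bfx=\int_{\O\cap Q^+}W\big(E(v)-E_0(y)\big)\dd\bfx\leq\int_{\O}W\big(E(v)-E_0(y)\big)\dd\bfx .
\]
(Working in $H^1(Q;\R^2)$ rather than in $H^1_\loc(\O_\eps;\R^2)$, as in \cite{CriFis}, removes the need for any surgery on $v_\eps$.) The phase field is built as in \cite{BonCha02,Mod87}: let $\gamma_\eps$ be the almost-optimal profile solving \eqref{optimalprofile1}, extended by its well values outside the transition interval, and set $w_\eps(\bfx)\coloneqq\gamma_\eps(d_\O(\bfx)/\eps)$, with the sign of $d_\O$ chosen so that $w_\eps\equiv 1$ on $\O$, $w_\eps\equiv 0$ on $\{d_\O>\eps\}$, and $0\leq w_\eps\leq 1$ on the one-sided tube $T_\eps\coloneqq\{0<d_\O<\eps\}\subset Q\setminus\O$, which collapses onto $\Gamma$. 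Then $w_\eps\to\chi_\O$ in $L^1_\loc(Q^+)$ since $\abs{T_\eps\cap Q^+}\to 0$; above the exposed substrate $\{h=0\}$ one has $d_\O(x,y)=y$ for $y>0$, so the transition occurs in the strip $\{0<y<\eps\}$ and a thin wetting layer appears there automatically, at bulk cost $O(\eps)$. Finally $\int_{Q^+}w_\eps\dd\bfx=M$ is restored by the usual volume-fixing perturbation, supported far from $\Gamma$ and of vanishing energy cost.

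For the adatom density, let $\pi_\Gamma$ be the nearest-point projection onto $\Gamma$ (well defined and smooth on $T_\eps$ for small $\eps$), fix a box $B\subset Q^+$ with $\dist{B}{\Gamma}>0$, and set
\[
u_\eps\coloneqq (u\circ\pi_\Gamma)\,\chi_{T_\eps\cap Q^+}+c_\eps\,\chi_B ,\qquad c_\eps\coloneqq\frac{1}{\abs{B}}\Big(m-\int_{T_\eps\cap Q^+}u\circ\pi_\Gamma\dd\bfx\Big),
\]
so that $\int_{Q^+}u_\eps\dd\bfx=m$, i.e.\ $(w_\eps,v_\eps,u_\eps)\in\cA_p(m,M)$; since $\eps\abs{\nabla w_\eps}^2+\frac{1}{\eps}P(w_\eps)=0$ on $B$, the reservoir contributes nothing to $\mu_\eps$ nor to the surface term. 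The classical Modica--Mortola estimate --- using the eikonal identity $\abs{\nabla d_\O}=1$ a.e.\ (valid as $\partial\O$ is smooth), the coarea formula, and the $\sqrt{\eps}$-regularisation in \eqref{optimalprofile1}, which is precisely what makes $\gamma_\eps$ near-optimal and carries the normalisation $\sigma=2\int_0^1\sqrt{P}$ --- gives
\[
\frac{1}{\sigma}\Big(\eps\abs{\nabla w_\eps}^2+\frac{1}{\eps}P(w_\eps)\Big)\cL^2\llcorner(T_\eps\cap Q^+)\ \wsto\ \ho\llcorner\Gamma ,
\]
with local masses converging to $\ho(\Gamma\cap\,\cdot\,)$. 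Since this density is supported in $T_\eps$ and there multiplied by $u\circ\pi_\Gamma$ (resp.\ $\psi(u)\circ\pi_\Gamma$), a tubular-neighbourhood argument yields $\mu_\eps\wsto u\,\ho\llcorner\Gamma=\mu$ and $\frac{1}{\sigma}\int_{Q^+}[\eps\abs{\nabla w_\eps}^2+\frac{1}{\eps}P(w_\eps)]\psi(u_\eps)\dd\bfx\to\int_\Gamma\psi(u)\dho$. Combined with the bulk estimate this gives $\limsup_{\eps\to0}\cF_\eps(w_\eps,v_\eps,u_\eps)\leq\int_\O W(E(v)-E_0(y))\dd\bfx+\int_\Gamma\psi(u)\dho=\cH(\O,v,\mu)$, together with $(w_\eps,v_\eps,u_\eps)\to(\O,v,\mu)$ in the sense of Definition \ref{topology}.

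The delicate point is the surface term: the phase field alone must generate precisely $\sigma\,\ho\llcorner\Gamma$, so that once decorated with $u_\eps$ it produces simultaneously the $\ast$-weak limit $\mu$ and the sharp surface energy $\int_\Gamma\psi(u)\dho$, while one keeps the bulk term bounded, positions the transition layer on the film side of $\{y=0\}$ so that the exposed substrate does not absorb half of the surface energy, and meets both the volume constraint $\int_{Q^+}w_\eps=M$ and the mass constraint $\int_{Q^+}u_\eps=m$. The last point is the only genuinely model-specific one: since $T_\eps$ has vanishing Lebesgue measure, $u_\eps$ cannot carry the mass $m$ along $\Gamma$, so one stores it in a reservoir $B$ decoupled from the phase field --- admissible precisely because $u_\eps$ is independent of $w_\eps$ and $v_\eps$ (cf.\ the remark following Theorem \ref{compactness}). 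The precision of the Modica--Mortola computation with the regularised profile \eqref{optimalprofile1}, in the spirit of \cite[Theorem 3.1]{BonCha02}, is the technical heart of the argument.
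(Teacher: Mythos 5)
Your overall strategy matches the paper's: mollify $h$, build $w_\eps=\gamma_\eps(d_\O/\eps)$ from the almost-optimal profile, and run a Modica--Mortola analysis on the transition tube. Two secondary remarks before the main issue. First, your reservoir box $B$ for the mass constraint is a legitimate alternative to the paper's device, which instead normalises $u_\eps$ on each grid rectangle $R^j$ by the local Modica--Mortola mass $p^j_\eps$ after first reducing, via Theorem~\ref{limsuptheorem}, to grid-constant $u$; both are admissible since the reservoir carries no Modica--Mortola density. Second, by skipping the grid-constant reduction you are left to pass $\psi(u\circ\pi_\Gamma)$, with $u\in L^1(\Gamma)$ only, against the converging Modica--Mortola density; weak-$\ast$ convergence of measures alone does not give this, since it only tests bounded continuous functions, so your ``tubular-neighbourhood argument'' requires a coarea/Fubini slicing with a uniform-in-$\eps$ bound on the normal slices. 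This is doable but is not automatic, and the paper's reduction to piecewise-constant $u$ is precisely what sidesteps it.

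The genuine gap is the displacement. You set $v_\eps\coloneqq v$, but a configuration $(\O,v,\mu)\in\cA_r(m,M)$ only has $v\in H^1(\O;\R^2)$ extended by $v=0$ on $Q\setminus\O$; this extension lies in $H^1(Q;\R^2)$ only if the trace of $v$ on $\Gamma$ vanishes, which the sharp model does not impose (the free surface carries a traction-free, not a displacement, boundary condition). Since $\cA_p$ requires $v_\eps\in H^1(Q;\R^2)$, your $v_\eps$ is generically inadmissible: $Dv_\eps$ acquires a singular measure on $\Gamma$, so $E(v_\eps)\notin L^2(Q)$ and the dominated-convergence argument for the bulk term breaks down. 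The paper's ``surgery'' is exactly what repairs this: it defines $v_\eps(x,y)=v(x,y-\ell\eps)\,w_\eps(x,y-\ell\eps)$ where the shifted phase field is positive and $0$ elsewhere, so $v_\eps$ is ramped smoothly to zero across the transition layer and genuinely lies in $H^1(Q;\R^2)$. This cutoff produces a commutator term $v\odot\nabla w_\eps$ in $E(v_\eps)$ of order $\eps^{-1}$ on the tube, and it is precisely this contribution that the prefactor $\eta_\eps=o(\eps)$ in the bulk term is designed to absorb. Your remark that working in $H^1(Q;\R^2)$ ``removes the need for any surgery on $v_\eps$'' has the logic reversed: it is exactly the requirement $v_\eps\in H^1(Q;\R^2)$ that forces the cutoff.
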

\begin{proof}
\emph{Step 1. $C^\infty$ approximation.} Let $(\O,v,\mu)\in \cA_r(m,M)$. We approximate $h$ by convolution in order to get a $\cC^\infty$ approximant sequence $(g_\eps)_\eps$ such that the mass constraint is satisfied, for every $\eps>0$.\\
Consider a convolution kernel $\rho\in\cC_c^\infty(\R)$ (namely, $\int_\R\rho\ \de x=1$, $\rho\geq 0$ and $\supp \rho\subset [-1/2,1/2]$). Let, for each $x\in\R$,
 \begin{align*}
 	\rho_\eps(x)\coloneqq\frac{1}{\eps}\rho\Big(\frac{x}{\eps}\Big)
 \end{align*}
 and $g_\eps:(a,b)\to\R$ be defined as
	\begin{align*}
		g_\eps(x)\coloneqq h\ast \rho_\eps(x).
	\end{align*}
Let $\O_\eps$ be the sub-graph of $g_\eps$. By standard results, $\Gamma_\eps\coloneqq\partial \O_\eps\cap\big((a,b)\times\R\big)$ is a $\cC^\infty$ curve and since $g_\eps\to h$ uniformly as $\eps\to0$ we have
	\begin{align}\label{convolutionhausdconv}
	\R^2\setminus \O_\eps\stackrel{H}{\to} \R^2\setminus\O.
	\end{align}
	 In addition,
	\begin{align}\label{convergencelength}
		\lim_{\eps\to0}\ho(\Gamma_\eps)=\ho(\Gamma),
	\end{align}  
	Conisider a grid $(R^j)_{j=1}^N$ for which $\mu$ is grid-constant. From $(iii)$ of Definition \ref{deltacoverdef}, the uniform convergence and \eqref{convergencelength}, we can assume that there exists $\eps_0>0$ such that for every $\eps<\eps_0$ and for each $j=1,\dots,N$  we have
	\begin{align*}
		\Gamma\cap R^j\neq\emptyset\quad\text{and}\quad\Gamma_\eps\cap R^j\neq\emptyset.
	\end{align*} 
	We also note that from the previous assumption that
		\begin{align}\label{convergencelength2}
		\lim_{\eps\to0}\ho(\Gamma_\eps\cap R^j)=\ho(\Gamma\cap R^j),
	\end{align}  
	for each $j=1,\dots,N$.\\
We define $\overline{u}_\eps:\Gamma_\eps\to\R$ as
	\begin{align}\label{sameintersection}
\overline{u}_{\eps|\Gamma_\eps\cap R^j}\coloneqq \frac{\ho(\Gamma\cap R^j)}{\ho(\Gamma_\eps\cap R^j)}u^j.
	\end{align}
By assumption on $u$ we get that
	\begin{align*}
		\int_{\Gamma_\eps} \overline{u}_\eps\ \dho=\sum_{j=1}^N \overline{u}_\eps\ho (\Gamma_\eps\cap R^j)=\sum_{j=1}^N u^j\ho (\Gamma\cap R^j)=m.
	\end{align*}
	By using standard properties of the convolution, we obtain
	\begin{align*}
	\int_a^b g_\eps\ \de x=\int_a^b h\ast \rho_\eps\ \de x=\Big(\int_a^b h(x)\ \de x\Big)\Big(\int_\R \rho_\eps(x)\ \de x\Big)=M.
	\end{align*}
We are left to define a displacement sequence $\overline{v}_\eps \in H^1(\O_\eps;\R^2)$. We claim that for every $x\in(a,b)$ it holds
\begin{align}\label{gepsestimatefordisplacement}
	g_\eps(x)\leq h(x) + \eps L,
	\end{align}
where $\ell$ is the Lipschitz constant of $h$. We have
\begin{align*}
	g_\eps(x)=h\ast\rho_\eps(x)&=\int_\R h(t)\rho_\eps(x-t)\ \de t \\[5pt]&\leq \int_a^b h(x)\rho_\eps(x-t)\ \de t+\int_a^b \abs{h(t)-h(x)}\rho_\eps(x-t)\ \de t\\[5pt]
	&\leq h(x) +\ell\eps.
\end{align*}
Thanks to \eqref{gepsestimatefordisplacement}, we have
\begin{align*}
\{(x,y-\eps \ell):(x,y)\in\O_\eps\}\subset\subset\O
\end{align*}
and thus, the function $v_\eps: Q\to \R^2$, set as
\begin{align*}
	v_\eps(x,y)\coloneqq v (x,y-\eps \ell),
\end{align*}
 is well defined in a neighbourhood of $\O_\eps$. \\
We conclude that $(\O_\eps,v_\eps,\mu_\eps)_\eps\subset\cA_r(m,M)$, where $\mu_\eps\coloneqq u_\eps\ho\llcorner\Gamma_\eps$, and that $(\O_\eps,v_\eps,u_\eps)\to(\O,v,\mu)$ from \eqref{convolutionhausdconv} and the fact that $v_\eps$ is a translation of  $v$ and $\abs{\abs{v_\eps-v}}_{H^1(Q;\R^2)}\to0$ (see \cite[Theorem 4.26]{Brezis})  as $\eps\to0$.\\
Now we prove that we also have the approximation of the energy.
First, we have
\begin{align}\label{W1}
\nonumber	\Big|\int_{\O_\eps}W\big(E(v_\eps)-E_0(y)\big)\ \de \textbf{x} &- \int_\O W\big(E(v)-E_0(y)\big)\ \de \textbf{x}\Big|\\[5pt]
	\nonumber&\leq\int_{\O_\eps}\abs{W\big(E(v_\eps)-E_0(y)\big)- W\big(E(v)-E_0(y)\big)}\ \de \textbf{x}\\[5pt]
	&\hspace{0.5cm}+\int_{\O_\eps\setminus\O}W\big(E(v)-E_0(y)\big)\ \de \textbf{x}.
\end{align}
The second term on the right-hand side of \eqref{W1} goes to zero by Lebesgue Dominated Convergence Theorem. Regarding the first term, by a continuity argument for $W$ (as it is a quadratic form), we obtain that for every $\lambda>0$ there is a $\overline{\eps}>0$ such that for every $\eps<\overline{\eps}$ we have
\begin{align}\label{W2}
	\abs{W\big(E(v_\eps)-E_0(y)\big)-W\big(E(v)-E_0(y)\big)}\leq\lambda.
\end{align}
From \eqref{W1}
 and by taking into account \eqref{W2}, we obtain
 \begin{align}\label{W3}
\nonumber\Big|\int_{\O_\eps}W\big(E(v_\eps)-E_0(y)\big)\ \de \textbf{x} &- \int_\O W\big(E(v)-E_0(y)\big)\ \de \textbf{x}\Big|\\[5pt]
	&\leq \lambda \abs{\O_\eps}+\int_{\O_\eps\setminus\O}W\big(E(v)-E_0(y)\big)\ \de \textbf{x}\to0,
 \end{align} 
 as $\eps\to0$.
 Moreover, by \eqref{convergencelength} and \eqref{sameintersection}, we have
\begin{align}\label{W4}
\int_{\Gamma_\eps}\psi(\overline{u}_\eps)\ \de\ho=	\sum_{j=1}^N\psi\Big(\frac{\ho(\Gamma\cap R^j)}{\ho(\Gamma_\eps\cap R^j)}u^j\Big)\ho(\Gamma_\eps\cap R^j)\to\int_\Gamma \psi(u)\ \dho,
\end{align}
as $\eps\to0$, where we used the convexity of $\psi$ and \eqref{convergencelength2}. From $\eqref{W3}$ and $\eqref{W4}$ we can conclude that
\begin{align*}
	\lim_{\eps\to0} \cH(\O_\eps,v_\eps,u_\eps)=\cH(\O,v,\mu).
\end{align*}
Therefore, in the sequel we can assume $h$ to be of class $\cC^\infty$.\\

\emph{Step 2. Displacement and phase-field sequences.} Given $\eps>0$, consider the almost optimal profile problem
\begin{equation}
	\begin{cases}
		\eps^2	\abs{\gamma_\eps'(t)}^2=P(\gamma_\eps(t))+\sqrt{\eps}\quad 0\leq t\leq1,\\[5pt]
		\gamma_\eps(0)=1,\\[5pt]
		\gamma_\eps(1)=0.
	\end{cases}
	\label{optimalprofile2}
\end{equation}
We have that \eqref{optimalprofile2} has an unique solution $\gamma_\eps\in\cC^1([0,1])$ and $0\leq\gamma_\eps\leq1$. Moreover, we can extend $\gamma_\eps$ on $\R$ by setting $\gamma_\eps=1$ for $x<0$ and $\gamma_\eps=0$ for $x>1$. Note that, for every $\eps>0$,
\begin{align}\label{gammaepsbound}
\abs{\gamma_\eps'(t)}=\frac{1}{\eps}\sqrt{P(\gamma_\eps(t))+\sqrt{\eps}}<\frac{C}{\eps},
\end{align}
as the potential $P$ is bounded in $[0,1]$.\\
We define the phase-field approximating sequence $z_\eps :Q^+ \to \R$ as
\begin{align*}
	z_\eps(\bfx)\coloneqq \gamma_\eps\Big(\frac{d_{\O}(\bfx)}{\eps}\Big).
\end{align*}
We have that $z_\eps\in H^1(Q^+)$, indeed from the regularity of $\O$ follows that $d_\O\in\cC^2(\R^2)$, and therefore $\gamma_\eps\in\cC^1(\R)$. 
We first prove that $z_\eps\to \chi_\O$ in $L^1_\loc(Q)$. Take any compact $K\subset Q$. We have that
\begin{align}\label{zepstochi}
		\nonumber\int_{Q^+ \cap K} \abs{z_\eps(\bfx)-\chi_\O(\bfx)}\ \de\bfx &=\int_{\{\bfx\in Q^+\cap K:\  0\leq\abs{d_\O(\bfx)}\leq\eps\}} \abs{z_\eps(\bfx)-\chi_\Omega(\bfx)}\ \de \bfx\\[5pt]
		&\leq |\{\bfx\in Q^+\cap K:\  0\leq\abs{d_\O(\bfx)}\leq\eps\}|.
\end{align}
Using standard properties of the Minkowski content (see \cite[Definition 2.100 and Theorem 2.104]{AFP}) we have that
\begin{align}\label{minkowski}
	\lim_{\eps\to0}\frac{1}{2\eps}|\{\bfx\in Q^+\cap K:\  0\leq\abs{d_\O(\bfx)}\leq\eps\}|= \ho(\partial\O\cap K),
\end{align}
as $\eps\to0$.
Therefore, from \eqref{zepstochi} and \eqref{minkowski}, we can conclude that for every compact set $K$, 
\begin{align*}
\lim_{\eps\to0}\int_{Q^+\cap K} \abs{z_\eps(\bfx)-\chi_\O(\bfx)}\ \de\bfx=0.
\end{align*}
In order to get the mass constraint satisfied, we set
\begin{align*}
	\alpha_\eps\coloneqq \frac{M}{\int_{Q^+} z_\eps(\bfx)\ \de\bfx}.
\end{align*}
Note that, from similar computation to \eqref{zepstochi}, we can deduce
\begin{align}\label{intzeps}
	\int_{Q^+} z_\eps(\bfx)\ \de\bfx \geq M.
\end{align}
Now, since
\begin{align*}
	\int_{Q^+} |z_\eps|\ \de\bfx\to M,
\end{align*}
and from \eqref{intzeps}, we have \begin{align}\label{alphaeps}
	\alpha_\eps\leq1\quad \text{and}\quad\lim_{\eps\to0}\alpha_\eps=1.
\end{align}
We define a rescaled phase-field variable $w_\eps:\R^2\to\R$ as
\begin{align*}
	w_\eps(x,y)\coloneqq z_\eps(x,\alpha_\eps y),
\end{align*}
In such a way, for each $\eps>0$, $w_\eps$ satisfies 
\begin{align*}
	\int_{Q^+} w_\eps(\bfx)\ \de \bfx=M,
\end{align*}
and $w_\eps\to \chi_{\O}$ in $L^1_\loc(Q)$, as $\eps\to0$. 
Moreover, for every $\eps>0$, $w_\eps\in H^1(Q,\R)$. Indeed, from \eqref{zepstochi},  \eqref{alphaeps} and up to a change of variable, it is enough to show that, for every $\eps\geq0$,  $z_\eps\in H^1(Q,\R)$. We have that, by using \eqref{minkowski}, we have $z_\eps\in L^2(Q^+)$.
For the gradient, we have the following estimate,
\begin{align*}
\int_{Q^+} |\nabla z_\eps|^2\ \de\bfx = \int_{\{\bfx\in Q^+:\  0\leq\abs{d_\O(\bfx)}\leq\eps\}}\Big|\frac{\gamma'_\eps(d_\O(\bfx))}{\eps}\Big|^2\ \de\bfx \leq \frac{C^2}{\eps}\ho(\partial \O)<\infty,
\end{align*}
	where $C>0$ is as in \eqref{gammaepsbound}. Thus, we obtain that $z_\eps\in H^1(Q,\R)$, for every $\eps>0$, which implies $w_\eps\in H^1(Q,\R)$. \\
	Let $\ell$ be the Lipschitz constant of $h$ and assume $\ell\geq1$. We claim that if $(x,y)\in Q\setminus\O$ is such that $d_\O(x,y)<\eps$, then
\begin{align}\label{yelleps}
	(x,y-\ell\eps)\in\O.
\end{align}
Indeed, take any point $(\bar{x},h(\bar{x}))\in\Gamma$. We have
\begin{align*}
	\abs{h(\bar{x})-h(x)}\leq \ell \abs{\bar{x}-x},
\end{align*}
from which we can infer, by adding and subtracting $y$ and by using $\ell\geq1$, that
\begin{align*}
	y-h(x)\leq \ell(\abs{\bar{x}-x}+\abs{h(\bar{x})-y}).
\end{align*}
Now, if we take the infimum on both sides for $\bar{x}\in\Gamma$, we obtain
\begin{align*}
	y-h(x)\leq\ell d_\O(x,y)\leq \ell\eps,
\end{align*}
for which \eqref{yelleps} follows. The case in which $\ell<1$ is easier and therefore omitted. \\
We define the approximating displacement sequence $v_\eps:Q\to\R^2$ as
\begin{align*}
	v_\eps(x,y)\coloneqq\begin{cases}
	v(x,y-\ell\eps) w_\eps(x,y-\ell\eps)\quad &\text{if } w_\eps(x,y-\ell\eps)>0,\\[5pt]
		0\quad&\text{else},
	\end{cases}
\end{align*}
Since $h\in\cC^\infty(a,b)$, the definition of $v_\eps$ is well posed and $v_\eps\in H^1(Q;\R^2)$. Indeed, for every compact set $K\subset Q$ 
\begin{align}\label{vepsl2}
	\int_{Q\cap K} |v_\eps|^2\ \de\bfx
	\leq \abs{\abs{v}}^2_{L^2(Q;\R^2)}
\end{align}
and therefore $v_\eps \in L^2_\loc (Q;\R^2)$. Now, we set $\widetilde{w}_\eps(x,y)\coloneqq w_\eps(x,y-\ell\eps)$ and estimate
\begin{align}\label{vepsh1}
	\nonumber\int_{Q\cap K} |\nabla v_\eps|^2\ \de\bfx&\leq \int_{Q\cap K} \Big(\abs{\nabla v_\eps(x,y) \widetilde{w}_\eps(x,y)}^2+\abs{v_\eps(x,y)\otimes \nabla \widetilde{w}_\eps(x,y)}^2\\[5pt]&\hspace{1cm}+2\abs{\nabla v_\eps(x,y) \widetilde{w}_\eps(x,y)}\abs{v_\eps(x,y)\otimes \nabla \widetilde{w}_\eps(x,y)}\Big)\ \de \bfx.
\end{align}
 To see that $v_\eps\in H^1_\loc(Q;\R^2)$,  taking into account \eqref{vepsl2}, we prove that the right-hand side of \eqref{vepsh1} is bounded. The first term is easily estimated by the Sobolev norm of the gradient of $v$, indeed, set $Q\cap K_\eps\coloneqq Q\cap K+(0,\ell\eps)$, we have
 \begin{align*}
 \int_{Q\cap K}\abs{\nabla v_\eps(x,y)\widetilde{w}_\eps(x,y)}^2 \de\bfx&\leq  \int_{Q\cap K}\abs{\nabla v_\eps(x,y)}^2 \de\bfx\\[5pt]
 &=\int_{Q\cap K_\eps} \abs{\nabla v}^2\ \de\bfx\\[5pt]
 &\leq \abs{\abs{\nabla v}}_{L^2(Q)}.
 \end{align*}

 We estimate the second one and then conclude for the entire right-hand side of \eqref{vepsh1}. We have that
\begin{align*}
\int_{Q\cap K} \abs{v_\eps(x,y)\otimes \nabla \widetilde{w}_\eps(x,y)}^2\ \de \bfx&\leq	\int_{Q\cap K} \abs{v_\eps(x,y)}^2\abs{\nabla \widetilde{w}_\eps(x,y)}^2\ \de\bfx\\[5pt]
 &\leq \int_{Q\cap K_\eps}\abs{v(x,y)}^2\abs{\frac{\gamma'_\eps(d_\O(x,\alpha_\eps y))}{\eps^2}}^2 \de\bfx\\[5pt]
 &= \frac{C^2\abs{\abs{v}}^2_{L^2(Q;\R^2)}}{\eps^4},
\end{align*}
where we used \eqref{gammaepsbound} and \eqref{minkowski}. Therefore, for each $\eps$, $v_\eps\in H^1_\loc(Q;\R^2)$. \\
By using similar computations, it is possible to show that $v_\eps\to v$ in $L^2_\loc(Q;\R^2)$. Consider
\begin{align*}
	E_\eps (y)\coloneqq E_0(y-\ell\eps)=\begin{cases}
			t e_1 \otimes e_1\quad&\text{if}\ y\geq\ell\eps ,\\[5pt]
			0\quad&\text{if}\ y<\ell\eps.
		\end{cases}
\end{align*}
It follows, by definition, that 
\begin{align*}
	E(v_\eps(\bfx))=\begin{cases}
		E\big(v_\eps(x,y)\big)\widetilde{w}_\eps(x,y)+v_\eps(x,y)\odot \nabla \widetilde{w}_\eps(x,y)\quad &\text{if } \widetilde{w}_\eps(x,y)>0,\\[5pt]
		0\quad&\text{else}.
	\end{cases}
\end{align*}
\begin{figure}
	\begin{center}
		\includegraphics[scale=0.4]{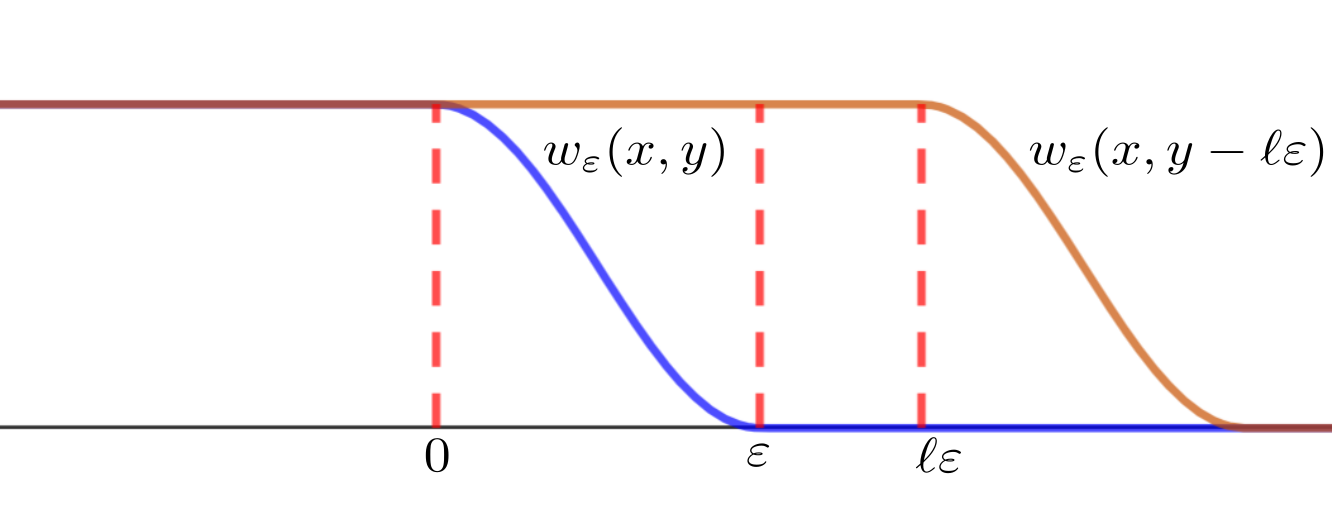}
		\caption{For each $x$, the function $\widetilde{w}_\eps(x,y)=w_\eps(x,y-\ell y)$ allows us to se the potential term $W$, where $\nabla \widetilde{w}_\eps=0$.}
		\label{figweps}
	\end{center}
\end{figure}
The reason for which the following computation is made with the translation of $w_\eps$, namely $\widetilde{w}_\eps$, is in order to avoid to see the gradient of $w_\eps$ in the area where it equals $1$, as it is made clear in Figure \ref{figweps}.\\
First, we set
\begin{align*}
	\widetilde{A}_\eps&\coloneqq\left\{\widetilde{w}_\eps(x,y)=1,\ h(x)+\eps<y<h(x)+\eps(\ell-1)\right\},\\[5pt]
	A_\eps&\coloneqq\left\{w_\eps(x,y)=1,\ h(x)+\eps(1-\ell)<y<h(x)+-\eps\right\}.
\end{align*}
We compute
\begin{align}\label{finallimsup1}
\nonumber\int_{Q^+}\big(w_\eps(\bfx)+\eta_\eps\big) W\big(&E(v_\eps)-E_\eps (y)\big)\ \de\bfx
\nonumber\\[5pt]\nonumber&=(1+\eta_\eps)\Big[\int_{\{w_\eps>0\}}  W\big(E(v_\eps)-E_\eps (y)\big)\ \de\bfx\\[5pt]
\nonumber&\hspace{0.5cm}+\int_{\widetilde{A}_\eps} W\big(E(v_\eps)-E_\eps (y)\big)\ \de\bfx\Big]\\[5pt]
&\hspace{0.5cm}+\eta_\eps\int_{\{0<\widetilde{w}_\eps<1\}} W\big(E(v_\eps)-E_\eps (y)\big)\ \de\bfx.
\end{align}
For the first term on the right-hand side of \eqref{finallimsup1}, we have
\begin{align}\label{finallimsup1.5}
\nonumber	\int_{\{w_\eps>0\}}  W\big(E(v_\eps)-E_\eps (y)\big)\ \de\bfx&=\int_{\{w_\eps(x,y+\ell\eps)>0\}}  W\big(E(v)-E_0(y)\big)\ \de\bfx\\[5pt]
	&\leq\int_\O W\big(E(v)-E_0(y)\big)\ \de\bfx,
\end{align}
since $\{w_\eps(x,y+\ell\eps)>0\} \subset \O$. Proceeding with the second term, we have
\begin{align}\label{finallimsup2}
\int_{\widetilde{A}_\eps} W\big(E(v_\eps)-E_\eps (y)\big)\ \de\bfx=\int_{A_\eps} W\big(E(v)-E_0(y)\big)\ \de\bfx,
\end{align}
which goes to $0$, as $\eps\to0$, by Dominated Convergence Theorem. Finally
\begin{align}\label{finallimsup3}
\nonumber\eta_\eps\int_{\{0<\widetilde{w}_\eps<1\}} &W\big(E(v_\eps)-E_\eps (y)\big)\ \de\bfx\\[5pt]
\nonumber&\leq K\eta_\eps\int_{\{0<\widetilde{w}_\eps<1\}}	\abs{E\big(v_\eps)\widetilde{w}_\eps-E_\eps (y)}^2+\abs{v_\eps(x,y)}^2 \abs{\nabla \widetilde{w}_\eps(x,y)}^2\\[5pt]
&\hspace{1cm}+2\abs{E\big(v_\eps)\widetilde{w}_\eps-E_\eps (y)}\abs{v_\eps(x,y)} \abs{\nabla \widetilde{w}_\eps(x,y)}\ \de\bfx,
\end{align}
where $K>0$ is the constant from the upper bound of the growth of $W$.\\
We estimate the three terms in the sum on the right-hand side of \eqref{finallimsup3}. We have
\begin{align}\label{finallimsup4}
\nonumber\eta_\eps\int_{\{0<\widetilde{w}_\eps<1\}} 	\abs{E\big(v_\eps)\widetilde{w}_\eps-E_\eps(y)}^2\ \de\bfx&=\eta_\eps\int_{\{0<w_\eps<1\}}\abs{E(v)w_\eps-E_0(y)}^2\ \de\bfx\\[5pt]
\nonumber&\leq\eta_\eps\int_{\{0<w_\eps<1\}}\abs{E(v)w_\eps}^2+\abs{E_0(y)}^2+2\abs{E(v)w_\eps}\abs{E_0(y)}\ \de\bfx\\[5pt]
\nonumber&\leq\eta_\eps\int_{\{0<w_\eps<1\}}|E(v)|^2\ \de\bfx+\int_{\{0<w_\eps<1\}}\abs{E_0(y)}^2\ \de\bfx\\[5pt]
&\hspace{0.5cm}+2\int_{\{0<w_\eps<1\}}\abs{E(v)}\abs{E_0(y)}\ \de\bfx.
\end{align}
Now, all the terms on the right-hand of \eqref{finallimsup4} are bounded, since $\nabla v\in L^2(\{0<w_\eps<1\};\R^2)\subset L^1(\{0<w_\eps<1\};\R^2)$ and since $\abs{E_0(y)}$ is constant on $\{0<w_\eps<1\}$. From that, we conclude that
\begin{align}\label{finallimsup5}
	\eta_\eps\int_{\{0<\widetilde{w}_\eps<1\}} 	\abs{E\big(v_\eps)\widetilde{w}_\eps-E_\eps(y)}^2\ \de\bfx\to0,
\end{align} 
since $\eta_\eps\to0$, as $\eps\to0$.\\
For the second term on the right-hand side of \eqref{finallimsup3}, by taking into account \eqref{gammaepsbound}, we have
\begin{align}\label{finallimsup6}
\nonumber\eta_\eps\int_{\{0<\widetilde{w}_\eps<1\}}  \abs{v_\eps}^2 \abs{\nabla \widetilde{w}_\eps}^2\ \de\bfx
&\leq C^2\frac{\eta_\eps}{\eps^2} \int_{\{0<w_\eps<1\}} \abs{v}^2 \de\bfx \\[5pt] &\leq C^2\frac{\eta_\eps}{\eps^2}\abs{\abs{v}}_2 \abs{\{\bfx\in Q^+:\  0<d_\O(\bfx)\leq\eps\}}\to0,
\end{align}
as $\eps\to0$, indeed, by \eqref{minkowski} \begin{align*}
	\frac{1}{\eps}\abs{\{\bfx\in Q^+:\  0<d_\O(\bfx)\leq\eps\}}\to \cH^1(\Gamma)
\end{align*} 
and $\eta_\eps/\eps\to 0$ by assumption.\\
Finally, the third term on the right-hand side of \eqref{finallimsup3} goes to 0 by a similar approach to the one used for \eqref{finallimsup5} and \eqref{finallimsup6}. \\
In conclusion, from \eqref{finallimsup1}, \eqref{finallimsup1.5}, \eqref{finallimsup5} and \eqref{finallimsup6} we obtain
\begin{align}\label{limsupbulk}
	\limsup_{\eps\to0}\int_{Q^+}\big(w_\eps(\bfx)&+\eta_\eps\big) W\big(E(v_\eps)-E_\eps(y)\big)\ \de\bfx\leq\int_{\Omega}  W\big(E(v)-E_0(y)\big)\ \de\bfx.
\end{align}

	\emph{Step 3. Density sequence.} 
Since $u$ is grid-constant, there exists a family of squares $\{R^j\}_{j=1}^N$, for which
\begin{align*}
	u_{|R^j\cap\Gamma}=u^j\geq0.
\end{align*}
Note that for every $\bfx\in Q$ such that $\abs{d_\O(\bfx)}>\eps$, we have $w_\eps(\bfx)=0$ or $w_\eps(\bfx)=1$. 
Up to further reducing $\eps$, from \cite[Lemma 5.4]{CriFis}, we can assume that, for every $j=1,\dots,N$,
\begin{align*}
	\{\bfx\in\R^2\ : \ \abs{d_\O(\bfx)}<\eps\} \subset R^j,
\end{align*}
so that if $\bfx\notin R^j$, for every $j=1,\dots,N$, we have
\begin{align}\label{pwequalzero}
	P(w_\eps)=|\nabla w_\eps|^2=0.
\end{align}
We set
\begin{align*}
	p_\eps^j\coloneqq\frac{1}{\sigma}\int_{Q^+\cap R^j} \Big(\eps |\nabla w_\varepsilon|^2+\frac{1}{\eps}P(w_\varepsilon) \Big)\ \de \bfx
\end{align*}
On each $R^j$, we define the approximating density $u_\eps:\R^2\to\R$ as
\begin{align*}
	u_{\eps|R^j}(\bfx)\coloneqq  \begin{cases}
		\displaystyle\frac{\mathcal{H}^1(\Gamma\cap R^j)}{ p_\eps^j  }u^j\quad&\bfx\in R^j\\[5pt]
		0\quad&\text{else}
	\end{cases}
\end{align*}
and notice that 
\begin{align}\label{limitpeps}
	\lim_{\eps\to 0}\frac{\mathcal{H}^1(\Gamma\cap R^j)}{ p_\eps^j  }=1,
\end{align}
for every $j=1,\dots,N$.
We note that if we set
\begin{align*}
	\mu_\eps\coloneqq  \frac{1}{\sigma}\Big(\eps |\nabla w_\varepsilon|^2+\frac{1}{\eps}P(w_\varepsilon) \Big)u_\eps \cL^2\llcorner Q^+,
\end{align*}
we have 
\begin{align*}         
	\mu_\eps(\R^2)=\frac{1}{\sigma}\int_{Q^+}\Big(\eps |\nabla w_\varepsilon|^2+\frac{1}{\eps}P(w_\varepsilon) \Big) u_\eps\ \de\bfx=\sum_{j=1}^N u^j\ho(\Gamma\cap R^j)=m.
\end{align*}
Moreover, $\mu_\eps\stackrel{\ast}{\wto}\mu$, as $\eps\to0$, indeed, take any $\varphi\in\cC_c(Q)$, we have
\begin{align*}
	\int_{Q^+}& \varphi\ \de\mu_\eps = \frac{1}{\sigma}\int_{Q^+} \Big(\eps |\nabla w_\varepsilon|^2+\frac{1}{\eps}P(w_\varepsilon) \Big)u_\eps\varphi\ \de\bfx\\[5pt]
	&=\sum_{j=1}^{N} u^j   \Big(\frac{\mathcal{H}^1(\Gamma\cap R^j)}{ p_\eps^j }-1\Big)\int_{Q^+\cap R^j} \Big(\eps |\nabla w_\varepsilon|^2+\frac{1}{\eps}P(w_\varepsilon) \Big)\varphi\ \de\bfx\\[5pt]
	&\hspace{1cm}+\sum_{j=1}^{N}\frac{u^j}{\sigma}\int_{Q^+\cap R^j} \Big(\eps |\nabla w_\varepsilon|^2+\frac{1}{\eps}P(w_\varepsilon) \Big)\varphi\ \de\bfx\\[5pt]
	&\to\sum_{j=1}^{N} u^j \ho(\Gamma\cap R^j),
\end{align*}
as $\eps\to0$, 
where we used \cite[Proposition 5.4,  Lemma 6.2]{CarCri}.\\
Now, we show that
\begin{align*}
\limsup_{\eps\to0} \int_{Q^+}\Big(&\eps|\nabla w_\eps|^2+\frac{1}{\eps}P(w_\eps)\Big)\psi(u_\eps)\ \de\bfx \leq\sigma\int_\Gamma \psi(u)\ \de\ho.
\end{align*}
First we have, by applying a change of variable and taking into account \eqref{alphaeps} and \eqref{pwequalzero}, that
\begin{align}\label{finalsteplimsup1}
\nonumber\int_{Q^+}&\Big(\eps|\nabla w_\eps|^2+\frac{1}{\eps}P(w_\eps)\Big)\psi(u_\eps)\ \de\bfx \\[5pt]
&\leq\sum_{j=1}^{N} \frac{1}{\alpha_\eps}\int_{R^j}\Big(\eps\abs{\nabla z_\eps}^2+\frac{1}{\eps}P(z_\eps)\Big)\psi\Big(\frac{\mathcal{H}^1(\Gamma\cap R^j)}{ p_\eps^j }u^j\Big)\ \de\bfx.
\end{align}
Morover, since $\psi$ is continuous (as it is convex by assumption), for every $\lambda>0$, there is $\eps_0>0$ such that, for every $\eps<\eps_0$, we have that, by taking into account \eqref{limitpeps}, that
\begin{align*}
\abs{	\psi\Big(\frac{\mathcal{H}^1(\Gamma\cap R^j)}{ p_\eps^j }u^j\Big)-\psi(u^j)}<\lambda.
\end{align*}
Therefore, by adding and subtracting $\psi(u^j)$ and for $\eps<\eps_0$, from \eqref{finalsteplimsup1}, we have
\begin{align}\label{finalsteplimsup2}
\nonumber\int_{Q^+}\Big(\eps|\nabla w_\eps|^2&+\frac{1}{\eps}P(w_\eps)\Big)\psi(u_\eps)\ \de\bfx \\[5pt]
\nonumber&\leq\sum_{j=1}^{N} \frac{\lambda}{\alpha_\eps}\int_{R^j}\Big(\eps\abs{\nabla z_\eps}^2+\frac{1}{\eps}P(z_\eps)\Big)\ \de\bfx\\[5pt]
&\hspace{0.5cm}+\sum_{j=1}^{N}\frac{\psi(u^j)}{\alpha_\eps}\int_{R^j}\Big(\eps\abs{\nabla z_\eps}^2+\frac{1}{\eps}P(z_\eps)\Big)\ \de\bfx
\end{align}
Finally, since the sequence $(z_\eps)_\eps$ makes use of the solution of problem \eqref{optimalprofile2}, we can apply the same argument as in \cite[Proposition 2]{Mod87} for the limsup inequality. By taking the limsup on both sides of \eqref{finalsteplimsup2} and then, by letting $\lambda\to0$, we obtain
\begin{align}\label{limsupsurf}
\nonumber
&\limsup_{\eps\to0} \int_{Q^+}\Big(\eps|\nabla w_\eps|^2+\frac{1}{\eps}P(w_\eps)\Big)\psi(u_\eps)\ \de\bfx \\[5pt]
\nonumber&\leq\sigma\sum_{j=1}^N\psi(u^j) \ho(\Gamma\cap R^j) \\[5pt]
&=\sigma\int_\Gamma \psi(u)\ \de\ho.
\end{align}

\emph{Step 4. Conclusion.} Using \eqref{limsupbulk} and \eqref{limsupsurf} we obtain that
\begin{align*}
\limsup_{\eps\to0} \cF_\eps(w_\eps,v_\eps,u_\eps)\leq \cH(\O,v,u),
\end{align*}
which concludes the proof of Theorem \ref{realtheoremlimsup}.
\end{proof}
By putting together Theorems \ref{realtheoremlimsup} and  \ref{liminftheorem} we  proved Theorem \ref{mainresult1} and \ref{mainresult2}.

\bibliographystyle{siam}
\def\url#1{}
\bibliography{bibliography}

\begin{thebibliography}{10}

\bibitem{AFP}
{\sc L.~Ambrosio, N.~Fusco, and D.~Pallara}, {\em Functions of bounded
  variation and free discontinuity problems}, Oxford Mathematical Monographs,
  The Clarendon Press Oxford University Press, New York, 2000.

\bibitem{AmbTor}
{\sc L.~Ambrosio and V.~M. Tortorelli}, {\em Approximation of functionals
  depending on jumps by elliptic functionals via {$\Gamma$}-convergence}, Comm.
  Pure Appl. Math., 43 (1990), pp.~999--1036.

\bibitem{BonCha02}
{\sc E.~Bonnetier and A.~Chambolle}, {\em Computing the equilibrium
  configuration of epitaxially strained crystalline films}, SIAM J. Appl.
  Math., 62 (2002), pp.~1093--1121.

\bibitem{Brezis}
{\sc H.~Brezis}, {\em Functional analysis, {S}obolev spaces and partial
  differential equations}, Universitext, Springer, New York, 2011.

\bibitem{CarCri}
{\sc M.~Caroccia and R.~Cristoferi}, {\em On the gamma convergence of
  functionals defined over pairs of measures and energy-measures}, J. Nonlinear
  Sci., 30 (2020), pp.~1723--1769.

\bibitem{CarCriDie}
{\sc M.~Caroccia, R.~Cristoferi, and L.~Dietrich}, {\em Equilibria
  configurations for epitaxial crystal growth with adatoms}, Arch. Ration.
  Mech. Anal., 230 (2018), pp.~785--838.

\bibitem{ConFocIur}
{\sc S.~Conti, M.~Focardi, and F.~Iurlano}, {\em Phase field approximation of
  cohesive fracture models}, Ann. Inst. H. Poincar\'e{} C Anal. Non Lin\'eaire,
  33 (2016), pp.~1033--1067.

\bibitem{CriFriSol}
{\sc V.~Crismale, M.~Friedrich, and F.~Solombrino}, {\em Integral
  representation for energies in linear elasticity with surface
  discontinuities}, Adv. Calc. Var., 15 (2022), pp.~705--733.

\bibitem{CriFis}
{\sc R.~Cristoferi and G.~Fissore}, {\em Two-dimensional graph model for
  epitaxial crystal growth with adatoms}, Adv. Calc. Var., 18 (2025),
  pp.~511--554.

\bibitem{DM93}
{\sc G.~Dal~Maso}, {\em An introduction to {$\Gamma$}-convergence}, Progress in
  Nonlinear Differential Equations and their Applications, 8, Birkh\"auser
  Boston, Inc., Boston, MA, 1993.

\bibitem{DalMasoGSBD}
\leavevmode\vrule height 2pt depth -1.6pt width 23pt, {\em Generalised
  functions of bounded deformation}, J. Eur. Math. Soc. (JEMS), 15 (2013),
  pp.~1943--1997.

\bibitem{Del}
{\sc C.~De~Lellis}, {\em Rectifiable sets, densities and tangent measures},
  Zurich Lectures in Advanced Mathematics, European Mathematical Society (EMS),
  Z\"urich, 2008.

\bibitem{Foc}
{\sc M.~Focardi}, {\em On the variational approximation of free-discontinuity
  problems in the vectorial case}, Math. Models Methods Appl. Sci., 11 (2001),
  pp.~663--684.

\bibitem{FonFusLeoMor07}
{\sc I.~Fonseca, N.~Fusco, G.~Leoni, and M.~Morini}, {\em Equilibrium
  configurations of epitaxially strained crystalline films: existence and
  regularity results}, Arch. Ration. Mech. Anal., 186 (2007), pp.~477--537.

\bibitem{FL_book}
{\sc I.~Fonseca and G.~Leoni}, {\em Modern methods in the calculus of
  variations: {$L^p$} spaces}, Springer Monographs in Mathematics, Springer,
  New York, 2007.

\bibitem{FriGur}
{\sc E.~Fried and M.~E. Gurtin}, {\em A unified treatment of evolving
  interfaces accounting for small deformations and atomic transport with
  emphasis on grain-boundaries and epitaxy}, Advances in applied mechanics, 40
  (2004), pp.~1--177.

\bibitem{Grin}
{\sc M.~A. Grinfeld}, {\em Stress driven instability in elastic crystals:
  mathematical models and physical manifestations}, J. Nonlinear Sci., 3
  (1993), pp.~35--83.

\bibitem{Leoni_book}
{\sc G.~Leoni}, {\em A first course in {S}obolev spaces}, vol.~181 of Graduate
  Studies in Mathematics, American Mathematical Society, Providence, RI,
  second~ed., 2017.

\bibitem{Mag}
{\sc F.~Maggi}, {\em Sets of finite perimeter and geometric variational
  problems}, vol.~135 of Cambridge Studies in Advanced Mathematics, Cambridge
  University Press, Cambridge, 2012.
\newblock An introduction to geometric measure theory.

\bibitem{Mod87}
{\sc L.~Modica}, {\em The gradient theory of phase transitions and the minimal
  interface criterion}, Arch. Rational Mech. Anal., 98 (1987), pp.~123--142.

\bibitem{MumSha}
{\sc D.~Mumford and J.~Shah}, {\em Optimal approximations by piecewise smooth
  functions and associated variational problems}, Comm. Pure Appl. Math., 42
  (1989), pp.~577--685.

\bibitem{SpeTer97}
{\sc B.~Spencer and J.~Tersoff}, {\em Equilibrium shapes and properties of
  epitaxially strained islands}, Phys. Rev. Lett., 79 (1997), pp.~4858--4861.

\end{thebibliography}

\end{document}